\newcommand{\dd}{\,{\rm d}}
\newcommand\R{{\mathbb{R}}}
\newcommand\T{{\mathbb{T}}}
\newcommand\N{{\mathbb{N}}}
\newcommand\Z{{\mathbb{Z}}}
\renewcommand\div{{\rm div}}
\newtheorem{theorem}{Theorem}[section]
\newtheorem{proposition}[theorem]{Proposition}
\newtheorem{lemma}[theorem]{Lemma}
\theoremstyle{definition}
\newtheorem{definition}[theorem]{Definition}
\newtheorem{assumption}[theorem]{Assumption}
\newtheorem{remark}[theorem]{Remark}
\numberwithin{equation}{section}
\renewcommand{\P}{\mathbb{P}}
\begin{document}

\title [Local and global solutions to SHKS]{Local pathwise solutions and regularization by noises for the stochastic hyperbolic Keller-Segel equation}\thanks{Corresponding author: Lei Zhang (lei\_zhang@hust.edu.cn)}

\author{Tengyu Li}
\address{School of Mathematics and Statistics, Hubei Key Laboratory of Engineering Modeling  and Scientific Computing, Huazhong University of Science and Technology,  Wuhan 430074, Hubei, P.R. China.}
\email{}

\author{Lei Zhang}
\address{School of Mathematics and Statistics, Hubei Key Laboratory of Engineering Modeling  and Scientific Computing, Huazhong University of Science and Technology,  Wuhan 430074, Hubei, P.R. China.}
\email{lei\_zhang@hust.edu.cn (L. Zhang)}

\keywords{Stochastic Keller-Segel equation; Pathwise solution; Local and global well-posedness.}

\date{\today}

\begin{abstract}
In this paper, we investigate the Cauchy problem associated with the stochastic hyperbolic Keller-Segel (SHKS) equation featuring multiplicative noises on the torus $\mathbb{T}^d$.
First, we establish the local existence and uniqueness of pathwise solutions to the SHKS equation within Sobolev spaces $H^s(\mathbb{T}^d)$ for $s>\frac{d}{2}+1$, under appropriate regularity conditions imposed on the nonlinear multiplicative noises. Subsequently, we explore two global results pertaining to noise-induced regularization: (1) The first result demonstrates that for polynomial-type nonlinear noises, when the noise intensity parameters meet specific threshold conditions, the SHKS equation possesses a unique pathwise solution for large initial data with probability one. This finding provides a partial answer to a question that has remained unresolved in the deterministic setting; (2) The second result reveals that, for small initial data, or equivalently when dealing with linear multiplicative noises with sufficiently large intensity (allowed to be negative), the SHKS equation admits a unique pathwise solution with high probability.
\end{abstract}

\maketitle
\section{Introduction}
\subsection{Formulation of problem} \label{110}
Chemotaxis refers to the oriented movement of cells (or an organism) in response to a chemical gradient. One of the most important equations used to describe this phenomena is the so-called Keller-Segel (KS) equations, which was originally derived in \cite{Keller-Segel}. Over the past decades, the deterministic KS equations have received much attention, 
see for example \cite{Hillen,bellomo2015}.
It is noteworthy that the macroscopic Keller-Segel equation is derived from the limiting behavior of microscopic models  \cite{stevens2000derivation}. In this framework, the density of cell is characterized by Fick's laws, where reaction dynamics are dependent on both cell density and chemoattractant concentration, while cell movement traces its microscopic origin to the irregular locomotion of individual organisms. Although fluctuations around the mean value are omitted during the derivation, realistic modeling necessitates the inclusion of random spatiotemporal forcing to account for the irreproducible characteristics of natural environments.
Another source of randomness arises from variations in environment or system parameters. 
As pointed out in \cite{patlak1953random,he2016noise} (see also \cite{hausenblas2022,huang2021}), noise in such biological systems can be classified into two categories: Internal noise stems from the inherent irregular movement of cells; External noise caused by random variations in environment.

As a result, incorporating random perturbations plays a crucial role in exploring the chemotaxis equation when it is subject to noisy environments. In this paper, we shall consider the Cauchy problem for the following stochastic hyperbolic Keller-Segel (SHKS) equation on the torus $\mathbb{T}^d$:
\begin{equation}\label{SHKS}
\left\{
\begin{aligned}
&\mathrm{d}u +\nabla(u(1-u)\nabla S)\mathrm{d}t= \sigma (t,u) \mathrm{d}W(t) ,   &&  x\in ~\mathbb{T}^d,t\ge 0,\\
& \Delta S=S-u,  &&  x\in ~\mathbb{T}^d,t\ge 0,\\
& u(0,x)=u_0(x), &&  x\in ~\mathbb{T}^d. \\
\end{aligned}
\right.
\end{equation}
Clearly, by using the second equation in \eqref{SHKS}, we have the relationship $S=(1-\Delta)^{-1}u$, which implies that   \eqref{SHKS} can be reformulated as
\begin{equation}\label{SHKS-trans}
\left\{
\begin{aligned}
&\mathrm{d}u +(1-2u)\nabla S\cdot \nabla u \mathrm{d}t+(u-u^2)\Delta S\mathrm{d}t=\sigma(t,u)\mathrm{d}W(t),   &&  x\in ~\mathbb{T}^d,t\ge 0,\\
& u(0,x)=u_0(x), & & x\in ~\mathbb{T}^d.\\
\end{aligned}
\right.
\end{equation}
Here in \eqref{SHKS} and \eqref{SHKS-trans}, the unknowns are the scalar functions $u=u(t, x)$, $S=S(t, x): \mathbb{R}^{+} \times \mathbb{R}^d \rightarrow \mathbb{R}^{+}$, which represent the density of cells and the chemical sensitivity, respectively.

The \emph{main purpose} of this paper is to study the existence and uniqueness of local and global strong solutions (in both the analytical and probabilistic senses) to the SHKS equation \eqref{SHKS} in Sobolev spaces. Let us present a concise overview of our results, with detailed statements available in Subsection 1.3: (1) If we assume that the random coefficient $\sigma(t,u)$ satisfies a general nonlinear condition (i.e., the locally Lipschitz continuous and suitable growth condition), then the SHKS equation \eqref{SHKS} admits a unique local-in-time pathwise solution as it is defined in Definition \ref{def-pathwise} below; (2) Based on the local theory established in Theorem \ref{result1}, we shall further show that the random noises with special structure have regularization effect on the pathwise solutions.

\subsection{Related works}\label{111}
In this subsection, we shall review several works in both of the deterministic and stochastic settings, which are closed related to the results obtained in this paper.

\textsc{Deterministic case.} In the deterministic setting, i.e., $\sigma \equiv 0$ in \eqref{SHKS}, the hyperbolic Keller-Segel (HKS) equations have received much attention during the past several years.  In fact, for one dimensional HKS equation with small diffusion $-\epsilon\Delta u$ under homogeneous Neumann boundary, Dolak and Schmeiser \cite{Dolak2005} established the global well-posedness with proper conditions on the initial data. Moreover, they also showed that, by taking the limit $\epsilon \rightarrow 0$, the solutions $u_{\epsilon}$  convergence to the entropy solution to the HKS equation in suitable topology. In higher dimensions, Burger et al. \cite{Burger2008} exhibited that under
suitable conditions, solutions of the parabolic version of HKS equation converge to solutions of the HKS as $\epsilon\rightarrow0$ for short times. There is a strong connection between this result and the work of \cite{Perthame2009}, where the authors performed the limit $\epsilon\rightarrow 0$ for weak solutions globally in time. For the Cauchy problem of parabolic version of HKS equation, Burger et al. \cite{Burger2006} not only proved the existence and uniqueness of weak solutions but also investigated the asymptotic behavior exhibited by the solutions when time becomes sufficiently large. A sub-threshold related to finite-time shock formation in solutions of the system HKS equation over $\mathbb{R}$ was presented by Lee and Liu \cite{Lee2015}.  In the framework of Besov spaces, Zhou et al. \cite{Zhou2021} first established local well-posedness for the Cauchy problem of HKS in $B^s_{p,r}$ ($1 \leq p,r \leq \infty$, $s > 1 + \frac{d}{p}$), and showed the data-to-solution map is not uniformly continuous. Subsequently, Zhang et al. \cite{Zhang2022} proved discontinuity of the solution map at $t=0$ (for $d=1$, $B^s_{2,\infty}$, $s > \frac{3}{2}$), established the Hadamard local well-posedness in $B^{1+d/p}_{p,1}$ ($1 \leq p < \infty$), and also derived two blow-up criteria for strong solutions via the Littlewood-Paley theory.

\textsc{Stochastic case.}  In recent years, taking the internal noises and the extended noises (as we mentioned in Subsection \ref{110}) into consideration,  many achievements have been made in the study of stochastic chemotaxis equations, see for example
\cite{hausenblas2022,chen2025,tang2024,huang2021,lukas2026,McKeanVlasov2021,Cattiaux2016,haskovec2011,chen2023,aryan2024,Rosenzweig2023}. Hausenblas et al. \cite{hausenblas2022} proved that the stochastic KS system (SKS) with homogeneous Neumann boundary conditions has a martingale solution in the finite time interval $[0, T]$,
and gave the regularity estimate of the solution. Chen et al. \cite{chen2025} studied systems with linear growth noise and nonlinear noise, and proved the existence and uniqueness of nonnegative global mild solutions. Tang and Wang \cite{tang2024}
investigated the existence, uniqueness, regularization effect and long time behavior of strong solutions for a nonlocal aggregation-diffusion equation with multiplicative noise on $\R^d$ ($d\ge 2$). Huang and Qiu \cite{huang2021} proved the existence of a unique nonnegative solution to stochastic Keller-Segel equations with common noise under a small $L^4$-norm condition on the initial data, and the divergence-free noise structure cancels the stochastic integral in the energy estimate, enabling control of the nonlinearity. Bol et al. \cite{lukas2026} overcame the challenges posed by degeneracy and aggregation on the whole space to obtain the well-posedness of a signal-dependent motility model and its mean-field limit from an interacting particle system. Cattiaux and P\'ed\`eches \cite{Cattiaux2016} proved the existence and uniqueness of a non-explosive strong Markov solution for the 2D stochastic Keller-Segel particle system in the subcritical regime $\chi < 4(1 - \frac{1}{N-1})$, while demonstrating finite-time blow-up for the supercritical case $\chi > 4$. Ha\v{s}kovec and Schmeiser \cite{haskovec2011} developed a stochastic particle method to approximate measure-valued solutions, including singularities for the 2D Keller-Segel system, and established its convergence using defect measures and BBGKY hierarchies. Chen, Gvozdik, and Li \cite{chen2023} rigorously derived the degenerate Keller-Segel system from a moderately interacting stochastic particle system via a nonlocal approximation and a vanishing diffusion limit $\sigma\to 0$. Aryan et al. \cite{aryan2024} proved that solutions to a broad class of active scalar flows with random diffusion on the torus converge exponentially fast in Gevrey norm to the uniform distribution. By introducing a random diffusion term, Rosenzweig and Staffilani \cite{Rosenzweig2023} successfully proved that the global solution of a class of generalized active scalar equations exists at high probability and has a monotonically decreasing Gevrey norm. Toma\v sevi\'c \cite{McKeanVlasov2021} proposed a probabilistic representation of the Keller-Segel model via a nonlinear SDE with singular interaction and established its global well-posedness in two dimensions for sufficiently small $\chi$. It is worth pointing out that the previous work mainly focused on the parabolic stochastic chemotaxis model, and there are still few studies on the hyperbolic-type chemotaxis equations, especially in the case of pathwise solutions.

\textsc{Global solutions.}  To the best of the authors'   knowledge, there are merely a few available studies addressing the issue of global well-posedness for the HKS equation in the framework of Sobolev spaces. In \cite{Meng2024}, Meng et al. first established a global well-posedness result for the HKS equation with small initial data near equilibrium state in $H^s(\mathbb{T}^d)$ for $s>1 + \frac{d}{2} $. Recently, in \cite{bi2025hyperbolic}, Bi and the second author of this paper extended the global theory in \cite{Meng2024} from Sobolev space to a broader class of Besov spaces, with the help of a distinct methodology that incorporates a weakly dissipative term $-\lambda u (\lambda>0)$ in HKS equation. It should be noted that the aforementioned results are proved by using analytical tools under a deterministic framework. One of the key \emph{novelties} of this paper lies in addressing the problem from the perspective of probability theory. Precisely, we shall show that solutions to the HKS equation with stochastic noise possessing nonlinear or linear structure are exactly global-in-time ones. Especially, we proved the existence and uniqueness of global pathwise solution for large initial data, which is left to be unsolved in the literatures. Let us mention that such a regularization by noise phenomena has already been widely explored for many other kinds of PDEs, see for example \cite{chen2025,glattholtz2014,tang2024,SMEP2,zhang2020local,gassiat2019regularization,flandoli2021high,hong2024regularization} and the reference cited therein.

\subsection{Notation and assumptions.}
Let us fix a separable Hilbert space $\mathfrak{U}$ and a stochastic basis
$
    \mathcal{S}=(\Omega,\mathcal{F},\{\mathcal{F}_t\}_{t \ge 0},\mathbb{P}),
$
where the filtration $\{\mathcal{F}_t\}_{t \ge 0}$ is right-continuous and complete with respect to $\P$.  Let $\{W(t)\}_{t\geq 0}$ be a $\mathcal {F}_t$-adapted $\mathfrak{U}$-valued cylindrical Wiener process defined by
$$
W(t )=\sum_{k\geq 1}W_k(t )e_k,
$$
where $\{W_k(t )\}_{k\geq1}$ is a family of mutually independent $\mathcal {F}_t$-adapted real-valued standard Wiener processes, and $\{e_k\}_{k\geq1}$ is a complete orthonormal basis in $\mathfrak{U}$. Introducing a larger auxiliary space
$
 \mathfrak{U}_0:=\{v=\sum_{k\geq 1}\alpha_j e_k;~\sum_{j\geq 1} \alpha_k^2/j^2  <\infty\} \supset \mathfrak{U}
$ endowed with the norm $\|v\|_{\mathfrak{U}_0}^2=\sum_{k\geq 1} \alpha_j^2/j^2$, it is well-known that the trajectories of $W(t)$ belongs to $C([0,T];\mathfrak{U}_0)$, $\mathbb{P}$-a.s.

Now consider another separable Hilbert space $X$, the space of Hilbert-Schmidt operators, denoted by $\mathcal{L}_2(\mathfrak{U};X)$, consists of all bounded linear operators $G:\mathfrak{U}\to X$ satisfying$\|G\|_{\mathcal{L}_2(\mathfrak{U};X)}^2=\sum_{k\ge 1}\|G e_k\|_{X}^2<\infty$, where $\{e_k\}$ is an orthonormal basis of $\mathfrak{U}$.

Let $L^2(\mathbb{T}^d)$ be the usual Hilbert space of square-integrable functions on the torus $\mathbb{T}^d$, and $H^s(\T^d)$ is the Sobolev space defined by
$$
H^s(\mathbb{T}^d)=\{f\in L^2(\mathbb{T}^d):\Vert f\Vert_{H^s(\mathbb{T}^d)}=\sum\limits_{k\in\mathbb{Z}^d}(1+k^2)^s\vert\widehat{f}(k)\vert^2<\infty\},
$$
which is endowed with the inner product
$
    (f,g)_{H^s}=\sum_{k\in\Z}(1+k^2)\widehat{f}(k)\overline{\widehat{g}(k)}=(\Lambda^s f,\Lambda^s g)_2,
$
where the Bessel potential $\Lambda^s=(1-\Delta)^{s/2}$. For two linear operators $A$ and $B$, their commutator is denoted by $[A,B]=AB-BA$.

For $1<p<\infty$ and $s\in\R$, the Banach space $L^p([0,T];H^s(\mathbb{T}^d))$ comprises all measurable functions $f:[0,T]\to H^s(\mathbb{T}^d)$ such that the norm $\| f\|_{L^p([0,T];H^s(\mathbb{T}^d))}=(\int_0^T\|f(t)\|_{H^s}^p\mathrm{d} t)^{\frac{1}{p}}<\infty$. Furthermore, for $\alpha\in(0,1)$, the fractional Sobolev space $W^{\alpha,p}([0,T];H^s(\T^d))$ is defined as the set of all functions $f\in L^p([0,T];H^s(\mathbb{T}^d))$ satisfying
$$
    [f]_{\alpha,p,T}^p=\int_0^T\int_0^T\frac{\|f(t)-f(\widetilde{t})_{H^s}^p}{|t-\widetilde{t}|^{1+2\alpha p}}\mathrm{d} t\mathrm{d} \widetilde{t}<\infty,
$$
and the space is equipped with the norm
$
    \|f\|_{W^{\alpha,p}([0,T];H^s(\T^d))}^p=\|f\|_{L^p([0,T];H^s(\T^d))}^p+[f]_{\alpha,p,T}^p.
$

\begin{assumption}\label{assumption}
For any $s>\frac{d}{2}+1$, we make the following assumptions in this paper

\textbf{(A1)} (Growth condition): For all $u\in H^s$, we suppose that there exists an increasing and locally bounded function $\beta:\mathbb{R}^+\mapsto [1,\infty)$ such that
\begin{equation}\label{A1}
    \Vert \sigma(t,u)\Vert_{\mathcal{L}_2(\mathfrak{U};H^s)}\le \beta(\Vert u\Vert_{W^{1,\infty}})(1+\Vert u\Vert_{H^s}),\quad\ \forall t\ge 0.
\end{equation}

\textbf{(A2)} (Locally Lipschitz continuity): For all $u$, $v\in H^s$, we suppose that there exists an increasing function $\gamma:\R^+\mapsto\R^+$, which is locally bounded and satisfies
\begin{equation}\label{A2}
    \Vert \sigma(t,u)-\sigma(t,v)\Vert_{\mathcal{L}_2(\mathfrak{U};H^s)}\le \gamma (\Vert u\Vert_{W^{1,\infty}}+\Vert v\Vert_{W^{1,\infty}})\Vert u-v\Vert_{H^s},\quad\ \forall t\ge 0.
\end{equation}
\end{assumption}

\begin{remark}
If the noise satisfies certain conditions, then Assumption \ref{assumption} involves two kinds of interesting special cases
$$
\sigma(t,u) =\sum_{i=1}^\infty c_i (1+\|u\|_{W^{1,\infty}})^{\delta} u , ~~ c\neq0,\quad \textrm{or} \quad \sigma(t,u) =\lambda u,~~ \lambda\neq 0.
$$
In what follows, we will further demonstrate that, by leveraging the local theory established in Theorem \ref{result1} below, the two aforementioned types of noises ensure that the corresponding solutions are indeed global in time (cf. Theorem \ref{result2} and Theorem \ref{result3}).
\end{remark}

\subsection{Main results}

To give the statement of our main results, let us first provide the definition of local pathwise solutions to the SHKS equation.

\begin{definition}\label{def-pathwise}
Let $s>\frac{d}{2}+1$ with $d\ge 1$ and $\mathcal{S}=(\Omega,\mathcal{F},\mathbb{P},\{\mathcal{F}_t\}_{t \ge 0},W)$ be a fixed stochastic basis as before. Assume that $u_0$ is an $H^s(\T^d)$-valued $\mathcal{F}_0$-measurable random variable with $\mathbb{E}\Vert u_0\Vert_{H^s}^2<\infty$.

(i) A \emph{local} pathwise solution of \eqref{SHKS-trans} is a pair $(u,\tau)$, where $\tau$ is a strictly positive stopping time, $u:\Omega\times[0,\infty)\to H^s(\mathbb{T}^d)$ is a $\mathcal{F}_t$-predictable process satisfying
$$
u(\cdot\land\tau)\in L^2(\Omega,C([0,\infty),H^s(\mathbb{T}^d))),
$$
and for all $t\ge 0$ and $\phi\in C^{\infty}(\mathbb{T}^d)$
\begin{equation}\label{pathwise-solutions}
\begin{aligned}
    &(u(t\land\tau),\phi)_2+\int_0^{t\land\tau}((1-2u)\nabla S\cdot\nabla u+(u-u^2)\Delta S,\phi)_2\mathrm{d}t\\
    &\quad \quad\quad\quad\quad\quad\quad\quad =(u_0,\phi)_2+\sum_{k\ge 1}\int_0^{t\land\tau}(\sigma(r,u)e_k,\phi)_2\mathrm{d}W_k(r),\quad\ \mathbb{P}\text{-a.s.}
\end{aligned}
\end{equation}

(ii) The local pathwise solutions are considered \textit{unique} or \emph{indistinguishable} if, for any two such solutions $(u_1, \tau_1)$ and $(u_2, \tau_2)$ satisfying $\mathbb{P}\{u_1(0) = u_2(0)\} = 1$, the following holds
\begin{equation}\label{unique}
    \mathbb{P}\{u_1(t,x)=u_2(t,x), \quad \forall (t,x)\in [0,\tau_1\land\tau_2]\times\mathbb{T}^d\}=1.
\end{equation}

(iii) A \emph{maximal} pathwise solution is a triple $(u,\{\tau_n\}_{n\ge 1},\xi)$, if each pair$(u,\tau_n)$ is a local pathwise solution, the sequence $\tau_n$ increases with $\lim_{n\to\infty} \tau_n=\xi$, and on the set $\{\xi < \infty\}$ the following holds $\mathbb{P}$-almost surely
\begin{equation}
    \sup_{t\in[0,\tau_n]}\Vert u(t)\Vert_{W^{1,\infty}}\ge n .
\end{equation}
If $\xi = \infty$ holds $\mathbb{P}$-almost surely, then the maximal pathwise solution is said to be \textit{global}.
\end{definition}

The first result in this paper is as follows.

\begin{theorem}[Local-in-time pathwise solution]\label{result1}
    Let $s>\frac{d}{2}+1$ with $d\ge 1$. Assume that $u_0$ is an $H^s(\T^d)$-valued $\mathcal{F}_0$-measurable random variable such that $\mathbb{E}\Vert u_0\Vert_{H^s}^2<\infty$, and the conditions \textbf{(A1)}-\textbf{(A2)} hold. Then the SHKS equation \eqref{SHKS-trans} has a unique local maximal pathwise solution $(u,\{\tau_n\}_{n\ge 1},\xi)$, in the sense of Definition \ref{def-pathwise}.
\end{theorem}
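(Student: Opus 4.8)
The plan is to construct the solution through a cut-off/regularization scheme, derive uniform $H^s$ energy estimates by commutator analysis, pass to the limit via stochastic compactness together with pathwise uniqueness, and finally reconstruct the maximal solution by a sequence of stopping times. Since the drift in \eqref{SHKS-trans} is quasilinear and the noise is only locally Lipschitz, I would first introduce a smooth cut-off $\theta_R:[0,\infty)\to[0,1]$ with $\theta_R\equiv 1$ on $\{\|u\|_{W^{1,\infty}}\le R\}$ and $\theta_R\equiv 0$ on $\{\|u\|_{W^{1,\infty}}\ge 2R\}$, and multiply the transport term, the zeroth-order term, and the growth of $\sigma$ by $\theta_R(\|u\|_{W^{1,\infty}})$. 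This yields a truncated system whose coefficients are globally controlled. I would then regularize the truncated system (by spectral Galerkin projection, or by adding a vanishing viscosity $\epsilon\Delta u$) to obtain well-posed approximate equations, the crucial requirement being a priori bounds that are uniform both in the regularization parameter and on a random time interval.

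The heart of the argument is the $H^s$ energy estimate. Applying It\^o's formula to $\|u\|_{H^s}^2$, the deterministic contribution of the top-order transport term is treated by writing $\Lambda^s\big((1-2u)\nabla S\cdot\nabla u\big)=(1-2u)\nabla S\cdot\nabla\Lambda^s u+[\Lambda^s,(1-2u)\nabla S\cdot\nabla]u$; the first piece integrates by parts into a bound of the form $\|\div((1-2u)\nabla S)\|_{L^\infty}\|u\|_{H^s}^2$, and the commutator is controlled by a Kato--Ponce estimate. Exploiting $S=(1-\Delta)^{-1}u$, which gains two derivatives so that $\|S\|_{H^{s+2}}\lesssim\|u\|_{H^s}$ and $\Delta S=S-u$, every coefficient reduces to a power of $\|u\|_{W^{1,\infty}}$ times $\|u\|_{H^s}^2$, while the zeroth-order term $(u-u^2)\Delta S$ is handled by the algebra property of $H^s$ for $s>\tfrac{d}{2}$. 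The It\^o correction equals $\|\sigma(t,u)\|_{\mathcal{L}_2(\mathfrak{U};H^s)}^2$, dominated by \textbf{(A1)}, and the martingale part is managed by the Burkholder--Davis--Gundy inequality so that it can be absorbed after taking expectations. With the cut-off in force these estimates close, and together with fractional-in-time bounds in $W^{\alpha,p}([0,T];H^{s-1})$ they supply the compactness needed for the limit passage.

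Next I would establish pathwise uniqueness for the truncated system by estimating the difference $w=u_1-u_2$ in $L^2$: the top-order term $(1-2u_1)\nabla S_1\cdot\nabla w$ is eliminated by integration by parts, the coefficient differences gain regularity from $S=(1-\Delta)^{-1}u$ (so that $\nabla(S_1-S_2)=\nabla(1-\Delta)^{-1}w$ is one order smoother than $w$), and the remaining terms are bounded using $u_i\in W^{1,\infty}$ together with \textbf{(A2)}, after which Gronwall forces $w\equiv 0$. Passing to the limit in the approximations by tightness and the Skorokhod representation theorem produces martingale solutions, and combining these with pathwise uniqueness through the Gy\"ongy--Krylov characterization yields a unique pathwise solution of each truncated problem. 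Setting $\tau_R=\inf\{t:\|u^R(t)\|_{W^{1,\infty}}\ge R\}$, the cut-off is inactive on $[0,\tau_R]$, so $u^R$ solves the genuine equation there; uniqueness makes the family consistent in $R$, and letting $R\to\infty$ along a sequence gives the maximal pathwise solution $(u,\{\tau_n\}_{n\ge 1},\xi)$ with the stated blow-up criterion. The main obstacle is the absence of parabolic smoothing: the entire $H^s$ bound must be closed purely through the transport structure and commutator estimates, and one must ensure that the stochastic forcing, whose It\^o correction grows quadratically in $\|u\|_{H^s}$, does not destroy the energy balance, which is exactly where the BDG inequality and the $W^{1,\infty}$-dependent coefficient $\beta$ in \textbf{(A1)} are essential.
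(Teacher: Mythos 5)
Your outline faithfully reproduces the first half of the paper's argument (cut-off, Galerkin approximation, $H^s$ energy estimates via Kato--Ponce commutators and the smoothing of $S=(1-\Delta)^{-1}u$, fractional-in-time bounds, tightness, Skorokhod, pathwise uniqueness, Gy\"ongy--Krylov, and removal of the truncation by stopping times). However, there is a genuine gap: this machinery alone cannot reach the stated regularity threshold $s>\frac{d}{2}+1$. The compact embedding $W^{\alpha,2p}([0,T];H^{s})\hookrightarrow\hookrightarrow C([0,T];H^{s-1})$ forces the Skorokhod limits to converge only in $C([0,T];H^{s-1})$, so identifying the limit of the drift $\theta_R(\|u_n\|_{W^{1,\infty}})\big((1-2u_n)\nabla S(u_n)\cdot\nabla u_n+\cdots\big)$ requires, at the very least, $H^{s-1}\hookrightarrow W^{1,\infty}$ (to pass to the limit in the cut-off itself) and continuity of the quasilinear terms on $H^{s-1}$; likewise the uniqueness estimate is run in a norm two derivatives below $s$. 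This is precisely why the paper first proves existence only for $s>\frac{d}{2}+3$ (Section 2, Propositions 2.4--2.11). Your proposal, as written, proves the theorem only in that higher-regularity range.

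The missing ingredient is the descent from $s>\frac{d}{2}+3$ to $s>\frac{d}{2}+1$ (Section 3 of the paper). One mollifies the initial data, $u_0^j=P_ju_0$, solves each regularized problem by the smooth theory, and shows $\{u^j\}$ is Cauchy in $C([0,\tau];H^s)$ via an abstract Cauchy criterion (Lemma \ref{abstract-cauchy-lemma}). The delicate point there is that the difference estimate for $w^{j,k}=u^j-u^k$ in $H^s$ does not close by itself: it produces a term $\|w^{j,k}\|_{H^{s-1}}^2\|u^j\|_{H^{s+1}}^2$, which must be controlled by a separate It\^o product-rule estimate combined with the spectral bounds $\|u_0-P_ju_0\|_{H^{s-1}}=o(j^{-1})$ and $\|P_ju_0\|_{H^{s+1}}\le C j\|u_0\|_{H^s}$, so that the product of a small quantity and a large quantity vanishes in the limit. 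Without this two-tier structure (or some substitute for it), the theorem at the claimed regularity $s>\frac{d}{2}+1$ is not established.
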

\begin{remark}
As noted previously, when compared to existing studies on the stochastic Keller-Segel equation, such as those in \cite{tang2024,chen2025,huang2021}, the primary challenge in proving Theorem \ref{result1} lies in the hyperbolic structure and nonlocal convection terms inherent to the equations under consideration. Specifically, the lack of a Laplacian operator results in solutions to the HKS system having lower spatial regularity. This reduced regularity invalidates the use of classical compactness principles (e.g., the Aubin-Lions lemma), which are typically employed to establish the convergence of approximate solutions.
Furthermore, as discussed in Subsection \ref{111}, relatively little is known about the global-in-time solvability of HKS equations within appropriate Sobolev spaces, particularly regarding the existence and uniqueness of global solutions for large initial data \cite{Meng2024}. Theorem \ref{result1} partially addresses this gap by providing an affirmative answer to this question, leveraging the powerful analytical tools associated with stochastic perturbations driven by Brownian-type noises.
The methodology of our proof draws key inspiration from three lines of work: \cite{glattholtz2014}, which focuses on the stochastic Euler equation; and \cite{zhang2020local}, which investigates the stochastic Euler-Poincar\'{e} equations. More precisely, our proof proceeds in the following steps:
First, we approximate the original equation using suitable cut-off functions and the classical Garlekin method. This approximation allows us to construct approximate solutions within sufficiently smooth Sobolev spaces.
Next, by establishing appropriate uniform a priori estimates, we prove the existence and uniqueness of local-in-time pathwise solutions via the stochastic compactness method.
Finally, leveraging an abstract stability lemma from \cite{glattholtz2014} (see Lemma \ref{abstract-cauchy-lemma} below), we demonstrate that these approximate solutions form a Cauchy sequence in the Sobolev space $H ^ s ( \mathbb{T}^d ) ( s > \frac{d}{2} + 1 )$. This property further implies that the limit of this sequence is indeed a local pathwise solution to the SHKS equation, as defined in Definition \ref{def-pathwise}.

Our next goal is to explore the regularization effect by suitable Brownian-type noises on the local pathwise solution constructed in Theorem \ref{result1}. The first one can be stated by the following theorem.
\end{remark}

\begin{theorem}[Global solution with large initial data]\label{result2}
   Fix $s>\frac{d}{2}+1$ with $d\ge 1$, and assume that $u_0$ is an $H^s(\T^d)$-valued $\mathcal{F}_0$-measurable random variable in $L^2(\Omega;H^s(\mathbb{T}^d))$.
    Suppose that the parameters $\delta$ and $\{c_i\}_{i\geq1}$ satisfy
  $$
     \delta>\frac{1}{2} ~\textrm{and}~ \sum_{i=1}^\infty c_i^2\neq 0,\quad 
   \textrm{or}~
       \delta=\frac{1}{2}~ \textrm{and} ~ \sum_{i=1}^\infty c_i^2>\kappa,$$
   where $\kappa$ is an universal constant in \eqref{5.3} below. Then the local pathwise solution $(u,\{\tau_n\}_{n\ge 1},\xi)$ to the system with a class of nonlinear multiplicative noise
   \begin{equation}\label{1.8}
       \left\{
       \begin{aligned}
           &\mathrm{d}u+((1-2u)\nabla S(u)\cdot \nabla u +(u-u^2)\Delta S(u))\mathrm{d}t\\
           &\quad\quad\quad\quad\quad\quad\quad \quad\quad =\sum_{i=1}^\infty c_i(1+\|u\|_{W^{1,\infty}})^{\delta}u~\mathrm{d}W_i(t), &&x\in \mathbb{T}^d,~t\ge 0,\\
           &u(0,x)=u_0(x),&&x\in\mathbb{T}^d,
       \end{aligned}
       \right.
   \end{equation}
   extends globally in time with probability one, i.e., $\mathbb{P}\{\xi=\infty\}=1$.
\end{theorem}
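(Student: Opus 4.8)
The plan is to promote the local maximal pathwise solution $(u,\{\tau_n\}_{n\ge1},\xi)$ supplied by Theorem \ref{result1} to a global one by ruling out finite-time explosion. By the blow-up criterion in Definition \ref{def-pathwise}(iii), it suffices to show that $\|u(t)\|_{W^{1,\infty}}$ cannot become unbounded on any finite interval; since $s>\frac d2+1$ gives the Sobolev embedding $\|u\|_{W^{1,\infty}}\le c_s\|u\|_{H^s}$, this reduces to controlling $\|u\|_{H^s}$. Accordingly, I would fix $T>0$, introduce the stopping times $\sigma_R=\inf\{t\ge0:\|u(t)\|_{H^s}\ge R\}$ (so that $\sigma_R\le\xi$ and $\sigma_R\uparrow\xi$ as $R\to\infty$), and aim to prove $\mathbb{P}(\sigma_R\le T)\to0$, which through the inclusion $\{\xi\le T\}\subseteq\{\sigma_R\le T\}$ forces $\mathbb{P}(\xi\le T)=0$ for every $T$ and hence $\mathbb{P}(\xi=\infty)=1$.

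The core is an It\^o-level energy estimate for a carefully chosen Lyapunov functional. First I would establish, via Kato--Ponce/commutator estimates together with the smoothing of $S=(1-\Delta)^{-1}u$, the a priori bound
\[
2\big(u,F(u)\big)_{H^s}\le \kappa\,(1+\|u\|_{W^{1,\infty}})\,\|u\|_{H^s}^2,
\]
where $F(u)$ denotes the drift of \eqref{1.8} and $\kappa$ is precisely the universal constant appearing in \eqref{5.3}. Writing $X=\|u\|_{H^s}^2$ and $a=(1+\|u\|_{W^{1,\infty}})^\delta$, the particular structure $\sigma_i(u)=c_i(1+\|u\|_{W^{1,\infty}})^\delta u$ yields $(u,\sigma_i)_{H^s}=c_i a X$ and $\|\sigma_i\|_{H^s}^2=c_i^2a^2X$, so that It\^o's formula (justified rigorously along the Galerkin/cut-off approximation of Theorem \ref{result1}, where $u$ is an $H^s$-valued continuous semimartingale) gives
\[
\mathrm{d}X=\big[2(u,F(u))_{H^s}+a^2X\textstyle\sum_i c_i^2\big]\mathrm{d}t+2aX\textstyle\sum_i c_i\,\mathrm{d}W_i .
\]

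The decisive step is to apply It\^o's formula to the concave functional $Y=\log(1+X)$, which is tailored both to exploit the noise and to make $\mathbb{E}Y_0=\mathbb{E}\log(1+\|u_0\|_{H^s}^2)\le\mathbb{E}\|u_0\|_{H^s}^2<\infty$ finite. The finite-variation part of $\mathrm{d}Y$ equals
\[
\frac{2(u,F(u))_{H^s}}{1+X}+a^2\Big(\textstyle\sum_i c_i^2\Big)\Big[\frac{X}{1+X}-\frac{2X^2}{(1+X)^2}\Big],
\]
in which the positive It\^o correction and the negative contribution coming from the concavity (the quadratic-variation term) combine to a net factor that tends to $-a^2\sum_i c_i^2$ as $X\to\infty$. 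Comparing this against the drift bound $\kappa(1+\|u\|_{W^{1,\infty}})$ and recalling $a^2=(1+\|u\|_{W^{1,\infty}})^{2\delta}$, the regularization mechanism becomes transparent: when $\delta>\frac12$ the noise term $-(\sum_i c_i^2)(1+\|u\|_{W^{1,\infty}})^{2\delta}$ dominates the linear-in-$\|u\|_{W^{1,\infty}}$ drift for large norms as soon as $\sum_i c_i^2\neq0$, while for $\delta=\frac12$ both contributions are linear in $\|u\|_{W^{1,\infty}}$ and the net coefficient $(\kappa-\sum_i c_i^2)(1+\|u\|_{W^{1,\infty}})$ is eventually negative precisely under the threshold $\sum_i c_i^2>\kappa$. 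In either regime one concludes that the drift of $Y$ is bounded above by a deterministic constant $C_0$, so that $Y_{t\wedge\sigma_R}-C_0(t\wedge\sigma_R)$ is a supermartingale.

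Finally I would close the argument quantitatively: taking expectations in $Y_{t\wedge\sigma_R}\le Y_0+C_0(t\wedge\sigma_R)+M_{t\wedge\sigma_R}$ (the stochastic integral $M$ being a genuine martingale up to $\sigma_R$, where all coefficients are bounded) gives $\mathbb{E}Y_{T\wedge\sigma_R}\le\mathbb{E}Y_0+C_0T$. On $\{\sigma_R\le T\}$ one has $Y_{T\wedge\sigma_R}=\log(1+R^2)$ by path continuity, so Chebyshev's inequality yields $\mathbb{P}(\sigma_R\le T)\le(\mathbb{E}Y_0+C_0T)/\log(1+R^2)\to0$, and letting $R\to\infty$ completes the proof. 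I expect the principal obstacle to be twofold: rigorously justifying the infinite-dimensional It\^o formula for $\|u\|_{H^s}^2$ despite the drift $F(u)$ losing one derivative (handled by running the computation at the approximation level and passing to the limit using the cancellation of top-order terms via commutators), and, more delicately, tracking the constant $\kappa$ sharply enough through the commutator estimates so that the threshold $\sum_i c_i^2>\kappa$ in the borderline case $\delta=\frac12$ is exactly the one governing the sign of the net drift of $Y$.
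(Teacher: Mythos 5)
Your proposal is correct and follows essentially the same route as the paper's proof: both hinge on applying It\^o's formula to a logarithmic functional of the $H^s$ energy (the paper uses $\ln(e+\|P_nu\|_{H^s}^2)$ at the approximation level), using the drift bound \eqref{5.3} together with the negative It\^o correction $-\,(1+\|u\|_{W^{1,\infty}})^{2\delta}\sum_i c_i^2$ arising from concavity to show the drift of the log-energy is bounded above under the stated conditions on $\delta$ and $\sum_i c_i^2$, and then concluding via Chebyshev's inequality on the logarithm. The only (harmless, arguably cleaner) deviation is your final extraction step, which evaluates the log-energy at the stopping time $\sigma_R$ and uses optional stopping of the localized martingale, whereas the paper bounds $\mathbb{E}\sup_t\ln(e+\|u\|_{H^s}^2)$ via the Burkholder--Davis--Gundy inequality before applying Chebyshev.
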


\begin{remark}\label{rem1.7}
It is important to note that, Theorem \ref{result2} solely focuses on scenarios where the exponent of nonlinear noise intensity satisfies
$\delta \geq 1/2$. In contrast, the investigation into the global problem with large initial data when
$\delta < 1/2$, a context that may require additional constraints on parameter $c_i$, has not yet been fully resolved and thus calls for further in-depth exploration.
\end{remark}

\begin{remark}
Theorem \ref{result2} demonstrates that the nonlinear and nonlocal (since the norm  $\|u\|_{W^{1,\infty}}$ relies on the full information of
$u$ over $\mathbb{T}^d$) random noise term $\sum_{i=1}^\infty c_i( 1 + \|u\|_{ W^{1, \infty}}) ^\delta u$,  exerts a regularization effect on the solution of the deterministic HKS system when its intensity is appropriately chosen. Furthermore, Theorem \ref{result2} appears to represent the first global well-posedness result for \emph{large initial data} in the context of the HKS equation, which can be regarded as an advancement over the global well-posedness result established in the recent work \cite{Meng2024}.
To elaborate, the analytical framework employed in \cite{Meng2024} is limited to addressing periodic problems with small initial data: specifically, it only applies to initial data in the Sobolev space $H^s(\mathbb{T}^d)$ ($s > 1 + \frac{d}{2}$) that lie in the vicinity of the equilibrium state. The present study overcomes this limitation by incorporating perspectives from probability theory through the introduction of suitable stochastic perturbations driven by Brownian-type noises. As discussed in Subsection \ref{110}, such stochastically perturbed HKS equations also offer a more natural and accurate mathematical description of chemotaxis phenomena occurring in real world environments.
\end{remark}

\begin{theorem}[Global solution with small initial data]\label{result3}
    Assume that $\{W(t)\}_{t\geq0}$ is a $\mathcal {F}_t$-adapted real-valued one dimensional  Wiener process.   Let $s>\frac{d}{2}+1$ with $d\ge 1$. Suppose that $u_0$ is an $H^s(\T^d)$-valued $\mathcal{F}_0$-measurable random variable in $L^2(\Omega;H^s(\mathbb{T}^d))$, and let $(u,\{\tau_n\}_{n\ge 1},\xi)$ be the corresponding maximal strong pathwise solution to the system
    \begin{equation}\label{linear-noise}
       \left\{
       \begin{aligned}
           &\mathrm{d}u+((1-2u)\nabla S(u)\cdot \nabla u+(u-u^2)\Delta S(u))\mathrm{d}t=\lambda u~\mathrm{d}W(t), &x\in \mathbb{T}^d,~t\ge 0,\\
           &u(0,x)=u_0(x),&x\in\mathbb{T}^d,~t=0.
       \end{aligned}
       \right.
   \end{equation}
   For any   $R>1$ and $\rho>2$, there exists a constant $\widetilde{C}=\widetilde{C}(s,d)>0$ such that if the initial data satisfies
   \begin{equation}\label{small_initial_data}
      \|u_0\|_{H^s}\le \frac{\lambda^2}{2 R\rho \widetilde{C}},\quad\ \P\text{-a.s.},
   \end{equation}
   then the maximal existence time $\xi$ satisfies $\P\{\xi=\infty\}\ge 1-\frac{1}{R\frac{\rho-1}{2\rho}}$, which infers that the pathwise solution to system \eqref{linear-noise} exists globally in time with high probability.
\end{theorem}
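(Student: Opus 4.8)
The plan is to reduce global existence to a confinement estimate for $\|u\|_{H^s}$. Since $s>\frac d2+1$, the embedding $H^s\hookrightarrow W^{1,\infty}$ provides a constant $C_s=C_s(s,d)$ with $\|u\|_{W^{1,\infty}}\le C_s\|u\|_{H^s}$, so by the blow-up criterion in Definition \ref{def-pathwise}(iii) it suffices to prevent $\|u\|_{H^s}$ from ever reaching a fixed threshold. Fix $M:=\frac{\lambda^2}{\rho\widetilde C}$, where $\widetilde C=\widetilde C(s,d)$ is the constant from the energy estimate below, and set $\tau_M:=\inf\{t\in[0,\xi):\|u(t)\|_{H^s}\ge M\}$. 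On $\{\xi<\infty\}$ the blow-up criterion forces $\sup_{[0,\xi)}\|u\|_{H^s}=\infty$, hence $\{\xi<\infty\}\subseteq\{\tau_M<\infty\}$, and it is enough to prove $\P\{\tau_M<\infty\}\le R^{-\frac{\rho-1}{2\rho}}$. On $\{u_0=0\}$ uniqueness gives $u\equiv0$ and $\xi=\infty$, so I may restrict to $\{u_0\ne0\}$, where $\|u\|_{H^s}>0$ up to $\xi$.

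The core of the argument is an It\^o computation for $\ln\|u\|_{H^s}^2$. Specializing the $H^s$ a priori estimate used to prove Theorem \ref{result1} to the linear noise $\sigma(t,u)=\lambda u$, and writing $F(u)=-\big((1-2u)\nabla S\cdot\nabla u+(u-u^2)\Delta S\big)$, I would first derive, for $y:=\|u\|_{H^s}^2$,
\begin{equation*}
\mathrm dy\le\bigl(\widetilde C\,y^{3/2}+\lambda^2y\bigr)\,\mathrm dt+2\lambda y\,\mathrm dW ,
\end{equation*}
where the drift splits into the nonlinear contribution $2(\Lambda^su,\Lambda^sF(u))_2$, controlled via Kato--Ponce commutator and Moser product estimates (using that $\nabla S=\nabla(1-\Delta)^{-1}u$ gains a derivative and $\Delta S=S-u$) by $\widetilde C\|u\|_{H^s}^3$, and the It\^o correction $\lambda^2\|u\|_{H^s}^2=\|\lambda u\|_{H^s}^2$; the martingale term is $2\lambda\|u\|_{H^s}^2\,\mathrm dW$. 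Applying It\^o to $\ln y$ produces the crucial cancellation: the quadratic variation $\mathrm d[y]=4\lambda^2y^2\,\mathrm dt$ enters through the concave logarithm as $-\tfrac1{2y^2}\,\mathrm d[y]=-2\lambda^2\,\mathrm dt$, overriding the $+\lambda^2$ in the drift, so that
\begin{equation*}
\mathrm d\ln y\le\bigl(\widetilde C\,\|u\|_{H^s}-\lambda^2\bigr)\,\mathrm dt+2\lambda\,\mathrm dW .
\end{equation*}
This is precisely the regularization mechanism of linear multiplicative noise. For $t<\tau_M$ one has $\widetilde C\|u\|_{H^s}<\widetilde CM=\lambda^2/\rho$, whence the drift is strictly negative, $\le-\tfrac{\rho-1}{\rho}\lambda^2=:-2\beta$ with $\beta=\tfrac{\rho-1}{2\rho}\lambda^2$.

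Integrating up to $t\wedge\tau_M$ and passing to $\|u\|_{H^s}=\sqrt y$ gives
\begin{equation*}
\ln\|u(t\wedge\tau_M)\|_{H^s}\le\ln\|u_0\|_{H^s}-\beta(t\wedge\tau_M)+\lambda W(t\wedge\tau_M) .
\end{equation*}
By path continuity in $H^s$ (from the local theory), on $\{\tau_M<\infty\}$ the left side equals $\ln M$ at $t=\tau_M$, so $\sup_{t\ge0}\bigl(\lambda W(t)-\beta t\bigr)\ge\ln\frac{M}{\|u_0\|_{H^s}}\ge\ln(2R)$, using the hypothesis $\|u_0\|_{H^s}\le\frac{\lambda^2}{2R\rho\widetilde C}=\frac M{2R}$. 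I would bound this tail with the exponential supermartingale $\mathcal E_\theta(t)=\exp\bigl(\theta\lambda W(t)-\tfrac12\theta^2\lambda^2t\bigr)$: choosing $\theta=\tfrac{\rho-1}{2\rho}$ gives $\theta\beta=\theta^2\lambda^2\ge\tfrac12\theta^2\lambda^2$, so on the exceedance event $\mathcal E_\theta\ge e^{\theta a}$ and Doob's maximal inequality yields $\P\{\sup_t(\lambda W-\beta t)\ge a\}\le e^{-\theta a}$; with $a=\ln(2R)$,
\begin{equation*}
\P\{\tau_M<\infty\}\le(2R)^{-\frac{\rho-1}{2\rho}}\le R^{-\frac{\rho-1}{2\rho}},
\end{equation*}
which is the claim. (Alternatively, the explicit law $\P\{\sup_t(\lambda W-\beta t)\ge a\}=e^{-2\beta a/\lambda^2}$ for Brownian motion with negative drift gives an even sharper constant.)

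The main obstacle is the a priori $H^s$ estimate, namely controlling the nonlinear drift $2(\Lambda^su,\Lambda^sF(u))_2$ by $\widetilde C\|u\|_{H^s}^3$ with $\widetilde C=\widetilde C(s,d)$ \emph{independent of $\lambda$}: the transport term $(1-2u)\nabla S\cdot\nabla u$ must be handled without derivative loss through commutators, and the aggregation term $(u-u^2)\Delta S$ through Moser estimates together with the smoothing of $(1-\Delta)^{-1}$. Secondary technical points are justifying the It\^o formula for $\ln y$ via a localization $\tau_\varepsilon=\inf\{t:\|u\|_{H^s}\le\varepsilon\}$ with $\varepsilon\downarrow0$ (legitimate since near $y=0$ the drift $\sim\lambda^2y>0$ pushes away from the origin and $\ln\|u_0\|_{H^s}>-\infty$), and carrying the estimates along the approximating sequence $\{\tau_n\}$ before removing the localization. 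Finally, the sign of $\lambda$ is irrelevant, since only $\lambda^2$ and the martingale $\lambda W$ enter, in agreement with the statement that $\lambda$ may be negative.
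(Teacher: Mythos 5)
Your proposal is correct and reaches the stated bound, but it takes a different technical route from the paper. The paper first converts \eqref{linear-noise} into a random PDE via $v=\mu u$ with $\mu=e^{\frac{\lambda^2}{2}t-\lambda W(t)}$, runs a pathwise Littlewood--Paley estimate to get $\frac{\mathrm{d}}{\mathrm{d}t}\|v\|_{H^s}\le C\mu^{-1}\|v\|_{H^s}\|v\|_{W^{1,\infty}}+C^2\mu^{-2}\|v\|_{H^s}\|v\|_{W^{1,\infty}}^2$, introduces $\varphi=e^{-\lambda^2 t/2}v$ and \emph{two} stopping times (one on $\|u\|_{W^{1,\infty}}$, one on the geometric Brownian motion $\Phi_t$), and finally bounds $\P\{\widetilde\tau<\infty\}$ by showing $\Phi_t^{k}$ with $k=\frac12-\frac1{2\rho}$ is a martingale and applying Chebyshev. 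You instead apply It\^o directly to $\ln\|u\|_{H^s}^2$, where the damping $-\lambda^2$ appears as the It\^o correction overriding the quadratic-variation term $+\lambda^2$ in the drift of $\|u\|_{H^s}^2$; this is exactly the same cancellation the paper encodes in the factor $e^{-\lambda^2t/2}$, and your exponential supermartingale $\mathcal{E}_\theta$ with $\theta=\frac{\rho-1}{2\rho}=\frac12-\frac1{2\rho}$ is literally the paper's $\Phi_t^k$. What your route buys is economy: one stopping time instead of two, no passage through a random PDE, and a slightly sharper constant $(2R)^{-\frac{\rho-1}{2\rho}}$ in place of $R^{-\frac{\rho-1}{2\rho}}$ (and your remark that the optimal exponent is $\frac{\rho-1}{\rho}$ is also correct). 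What the paper's route buys is that after the transformation everything is a deterministic ODE argument along each path, so no It\^o formula for $\ln\|u\|_{H^s}^2$ (and hence no positivity localization $\tau_\varepsilon$) is needed. Two points you should make rigorous rather than assert: (i) the nonlinear bound $|(\Lambda^s u,\Lambda^s F(u))_2|\le \widetilde C\|u\|_{W^{1,\infty}}\|u\|_{H^s}^2$ is exactly the paper's \eqref{5.3} and needs the commutator/Moser work you sketch (the paper defers it to \cite[Lemma 2.4]{SMEP2}); (ii) applying It\^o to $\|u\|_{H^s}^2$ when the drift only takes values in $H^{s-1}$ requires a Galerkin/mollification step as in the proof of Theorem \ref{result2} (working with $P_nu$ and passing to the limit), not just the $\tau_\varepsilon$ localization you mention. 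Neither is a gap in the idea, only in the level of detail.
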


\begin{remark}
When compared with the deterministic work \cite{bi2025hyperbolic}, we arrive at the following   observation: The authors in \cite{bi2025hyperbolic} demonstrated that for the HKS equation incorporating weak dissipation $\lambda u$, a globally defined solution exists for small initial data-provided that the dissipation parameter
$\lambda>0$ is sufficiently large. In this context, Theorem \ref{result3} can be regarded as an extension of Theorem 1.2 and Theorem 1.3 from \cite{bi2025hyperbolic}. A key novelty of Theorem \ref{result3} lies in its relaxation of constraints on the dissipation term: unlike the deterministic framework in \cite{bi2025hyperbolic}, where the dissipation parameter was strictly limited to positive values, the stochastic dissipation term ``$\lambda \mathrm{d} W(t)$'' in \eqref{linear-noise} allows $\lambda$ to take negative values. This expansion of valid parameter ranges represents a distinct advancement in the analysis of the HKS equation under stochastic perturbations.
\end{remark}

\begin{remark}
By comparing Theorem \ref{result2} and Theorem \ref{result3}, we deduce that nonlinear multiplicative noise exerts a more pronounced regularization effect than linear multiplicative noise. This result demonstrates that, at least for the HKS equation, nonlinear multiplicative noise, when its intensity parameters meet suitable threshold conditions, possesses sufficient efficacy to suppress the singularities triggered by the nonlinear and nonlocal terms that are intrinsic to the HKS equations. On the other hand, the noise term shown in \eqref{linear-noise} can be interpreted as a degenerate version of the noise in \eqref{1.8}. This insight provides a partially affirmative answer to the question raised in Remark \ref{rem1.7}, particularly in the case where $\delta=0$ in \eqref{1.8}. Nevertheless, this outcome is accompanied by a trade-off: we can only prove the existence of global solutions for small initial data, and these solutions do not hold with fully general probability (i.e., their validity lacks the necessity of full probability coverage).
\end{remark}

The structure of this paper is arranged as follows: In Section \ref{section2}, we focus on constructing local pathwise solutions for initial data belonging to the Sobolev space
$H^s(\mathbb{T}^d)$ ($s>\frac{d}{2}+3$), and this construction is carried out using the Galerkin method combined with cut-off functions. Section \ref{section3} is dedicated to establishing the local well-posedness of the problem in $H^s(\mathbb{T}^d)$ ($s>\frac{d}{2}+1$), achieved by leveraging an abstract Cauchy Theorem. Subsequently, Section \ref{section4} proves the global existence results under a variety of noise conditions; the validity of these proofs depends on the more precise structural properties imposed on the random noises.

\section{Pathwise solution in sufficiently regular spaces.}\label{section2}

\subsection{Approximation scheme.}
We construct the approximation scheme as follows. For any $R>1$, let $\theta_R(x):[0,\infty)\to[0,1]$ be a $C^{\infty}$ cut-off function such that
\begin{equation}\label{theta_R}
    \theta_R(x)=\left\{
    \begin{aligned}
        &1, &&x\in [0,R],\\
        &0, &&x>2R.
    \end{aligned}
    \right.
\end{equation}
Then we consider the following problem by cutting the nonlinearities in \eqref{SHKS-trans}:
\begin{equation}\label{cutoff}
\left\{
\begin{aligned}
&\mathrm{d}u +G(u)\mathrm{d}t=F(u)\mathrm{d}W(t),   &&  x\in ~\mathbb{T}^d,t\ge 0,\\
& u(0,x)=u_0(x), &&  x\in ~\mathbb{T}^d,\\
\end{aligned}
\right.
\end{equation}
where
\begin{equation}
    \begin{aligned}
        G(u)&:=\theta_R\big(\|u\|_{W^{1,\infty}})\big((1-2u)\nabla S(u)\cdot\nabla u + (u-u^2)\Delta S(u)\big),\\
F(u)&:=\theta_R\big(\|u\|_{W^{1,\infty}}\big)\sigma(u).
    \end{aligned}
\end{equation}
In the Sobolev space $H^s(\T^d)$, we select a complete orthonormal basis $\{h_k\}_{k\ge 1}$. For all $n\ge1$, let $P_n$ denote the orthogonal projection operator, mapping $H^s$ onto span$\{h_1,h_2,...,h_n\}$, given by
\begin{equation}\label{def-p_n}
    P_n v=\sum_{k=1}^n(v,h_k)h_k,\quad\text{for all}\ v\in H^s.
\end{equation}
For any $s\ge0$ and any $0\le r\le s$ the projector satisfies the standard error estimate
\begin{equation}\label{eq:spectral-error}
\|v-P_n v\|_{H^r(\mathbb{T}^d)}\le Cn^{\,r-s}\|v\|_{H^s(\mathbb{T}^d)},\quad\  v\in H^s(\mathbb{T}^d),
\end{equation}
with a constant $C$ independent of $n$. And we have the following fundamental properties of $P_n$
\begin{align}
    P_n^2&=P_n,\\
    (P_n u,v)_2&=(u,P_n v)_2,\\
    \|P_n u\|_{H^s}&\le\|u\|_{H^s}.
\end{align}
Moreover, $P_n$ commutes with constant coefficient differential operators
\begin{equation}
    P_n(\partial^\alpha u)=\partial^\alpha(P_n u).
\end{equation}
In particular, it commutes with $\nabla$, $\Delta$, $\Lambda^s=(1-\Delta)^{\frac{s}{2}}$ and similar operators.

We consider the following Galerkin approximation scheme for \eqref{SHKS-trans}
\begin{equation}\label{eq:galerkin-system}
\left\{
\begin{aligned}
&\dd u_n(t) + G_n(u_n(t))\dd t = F_{n}(u_n(t))\dd W(t),&&  x\in ~\mathbb{T}^d,t\ge 0,\\
&u_n(0)=P_n u_0,&&  x\in ~\mathbb{T}^d,
\end{aligned}
\right.
\end{equation}
where the drift and diffusion are defined by projecting the original truncated operators
\begin{equation}
    \begin{aligned}
G_n(u_n)&:=\theta_R\big(\|u_n\|_{W^{1,\infty}}\big)P_n\Big( (1-2u_n)\nabla S(u_n)\cdot\nabla u_n + (u_n-u_n^2)\Delta S(u_n)\Big),\\
F_{n}(u_n)&:=\theta_R\big(\|u_n\|_{W^{1,\infty}}\big)P_n\big(\sigma(u_n)\big).
\end{aligned}
\end{equation}
For any $R > 1$ and $u \in C([0,T], H^s(\mathbb{T}^d))$, by \textbf{(A1)}-\textbf{(A2)} and the properties of $P_{n}$, it is not difficult to verify that there exists a constant $ C_{R} > 0$ such that
\begin{align}\label{N-bnd}
    \Vert G_{n}(u_{n})\Vert_{H^s}+\Vert F_{n}(u_{n})\Vert_{\mathcal{L}_2(\mathfrak{U};H^s)}\le C_{R}(1+\Vert u_{n} \Vert_{H^s}),
\end{align}
\begin{align}\label{N-lip}
    \|G_n(u_n)-G_n(v_n)\|_{H^s}+\|F_{n}(u_n)-F_{n}(v_n)\|_{\mathcal{L}_2(\mathfrak{U};H^s)}\le C_{R}\|u_n-v_n\|_{H^s}.
\end{align}
\begin{remark}
    \eqref{N-bnd} and \eqref{N-lip} implt that the system \eqref{eq:galerkin-system} may be considered as an SDE in finite dimensions, with globally Lipschitz drift and diffusion. According to the existence theory of SDE in Hilbert space (see \cite{gawarecki2011stochastic,prato1992}), we may infer that there exists a unique global in time solution $u_n$ to \eqref{eq:galerkin-system}, which is continuous in time, that is, $u_n\in C([0,\infty);H^s)$, $\P$-a.s.
\end{remark}
\subsection{A priori estimates}
In this subsection we carry out a priori estimates for approximate solutions. We need the folowing crucial lemmas.
\begin{lemma}[Algebra Property \cite{taylor1991}]\label{lem-algebra}
    Let $s>\frac{d}{2}+1$ and $f,g\in H^s(\mathbb{T}^d)$, then
    \begin{equation*}
        \|fg\|_{H^s}\le C_s\| f\|_{H^s}\|g\|_{H^s}.
    \end{equation*}
\end{lemma}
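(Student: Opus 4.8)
The plan is to work entirely on the Fourier side, exploiting the fact that on the torus the Fourier coefficients of a product form a convolution, $\widehat{fg}(k)=\sum_{j\in\Z^d}\widehat f(j)\,\widehat g(k-j)$, and then to reduce the desired bound to a discrete Young's convolution inequality. I would start from the defining identity
\begin{equation*}
\|fg\|_{H^s}^2=\sum_{k\in\Z^d}(1+|k|^2)^{s}\,|\widehat{fg}(k)|^2,
\end{equation*}
so that the whole difficulty lies in distributing the weight $(1+|k|^2)^{s/2}$ across the convolution.

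The first key step is the elementary subadditivity (Peetre-type) inequality: since $|k|\le |j|+|k-j|$, for every $s\ge 0$ there is a constant $C_s$ with
\begin{equation*}
(1+|k|^2)^{s/2}\le C_s\Big((1+|j|^2)^{s/2}+(1+|k-j|^2)^{s/2}\Big),\qquad k,j\in\Z^d.
\end{equation*}
Inserting this into the convolution and writing $F(k)=(1+|k|^2)^{s/2}|\widehat f(k)|$, $G(k)=(1+|k|^2)^{s/2}|\widehat g(k)|$, $a(k)=|\widehat f(k)|$ and $b(k)=|\widehat g(k)|$, I obtain the pointwise bound
\begin{equation*}
(1+|k|^2)^{s/2}|\widehat{fg}(k)|\le C_s\big((F*b)(k)+(a*G)(k)\big).
\end{equation*}
Taking the $\ell^2(\Z^d)$ norm and applying Young's inequality $\|F*b\|_{\ell^2}\le \|F\|_{\ell^2}\|b\|_{\ell^1}$ (and symmetrically) then gives
\begin{equation*}
\|fg\|_{H^s}\le C_s\big(\|f\|_{H^s}\|b\|_{\ell^1}+\|g\|_{H^s}\|a\|_{\ell^1}\big),
\end{equation*}
since $\|F\|_{\ell^2}=\|f\|_{H^s}$ and $\|G\|_{\ell^2}=\|g\|_{H^s}$.

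It remains to control the $\ell^1$ norms of the bare Fourier coefficients, and this is the one place where the hypothesis on $s$ is genuinely used — I regard it as the crux of the argument. By Cauchy–Schwarz,
\begin{equation*}
\|b\|_{\ell^1}=\sum_{k}(1+|k|^2)^{-s/2}\,(1+|k|^2)^{s/2}|\widehat g(k)|\le\Big(\sum_{k}(1+|k|^2)^{-s}\Big)^{1/2}\|g\|_{H^s},
\end{equation*}
and the series $\sum_{k\in\Z^d}(1+|k|^2)^{-s}$ converges precisely when $2s>d$; since here $s>\tfrac{d}{2}+1>\tfrac{d}{2}$ this is satisfied, so $\|b\|_{\ell^1}\le C_{s,d}\|g\|_{H^s}$ and likewise $\|a\|_{\ell^1}\le C_{s,d}\|f\|_{H^s}$. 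Combining these estimates yields $\|fg\|_{H^s}\le C_s\|f\|_{H^s}\|g\|_{H^s}$, as claimed. I note that only $s>d/2$ is actually required for this lemma (the stronger assumption $s>d/2+1$ enters the paper elsewhere, through the $W^{1,\infty}$ embedding); an alternative route for integer $s$ would combine the Leibniz rule with Gagliardo–Nirenberg interpolation, but the Fourier/convolution argument has the advantage of covering all real $s\ge 0$ uniformly.
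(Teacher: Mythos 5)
Your proof is correct: the paper does not prove this lemma at all (it is quoted from the cited reference \cite{taylor1991}), and your Fourier-side argument --- convolution of coefficients, the Peetre inequality $(1+|k|^2)^{s/2}\le C_s((1+|j|^2)^{s/2}+(1+|k-j|^2)^{s/2})$, Young's inequality, and Cauchy--Schwarz with the convergence of $\sum_k(1+|k|^2)^{-s}$ for $2s>d$ --- is precisely the standard proof of the cited result on the torus. Your closing observation is also accurate: the algebra property only needs $s>\frac{d}{2}$, and the stronger hypothesis $s>\frac{d}{2}+1$ is inherited from the rest of the paper (where the embedding $H^s\hookrightarrow W^{1,\infty}$ is required) rather than from this lemma.
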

\begin{lemma}[Kato-Ponce \cite{kato1988}] \label{lem-kato}
    If $f\in H^s\cap W^{1,\infty}$, $g\in H^{s-1}\cap L^{\infty}$ for $s>0$, then there exist a $C_s>0$, such that
$$
        \Vert [\Lambda^s,f]g\Vert_{L^2}\le C_s(\Vert \Lambda^s f\Vert_{L^2}\Vert g\Vert_{L^{\infty}}+\Vert\nabla f\Vert_{L^{\infty}}\Vert \Lambda^{s-1}g\Vert_{L^2}).
$$
    If $f,g\in H^s\cap L^{\infty}$, then
$$
\Vert fg\Vert_{H^s}\le C_s(\Vert f\Vert_{H^s}\Vert g\Vert_{L^{\infty}}+\Vert f\Vert_{L^{\infty}}\Vert g\Vert_{H^s}).
$$
\end{lemma}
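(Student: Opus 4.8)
The plan is to establish both inequalities by Littlewood--Paley decomposition together with Bony's paraproduct calculus, treating the commutator estimate as the main content and obtaining the product rule as a byproduct of the same frequency analysis. First I would fix a dyadic Littlewood--Paley partition and write $\Delta_q$ ($q\ge -1$) for the associated frequency projectors, so that $f=\sum_q\Delta_q f$ and $g=\sum_q\Delta_q g$. Since $\Lambda^s$ is the Fourier multiplier with symbol $\langle k\rangle^s=(1+|k|^2)^{s/2}$ on $\T^d$, the commutator has the representation
\[
\widehat{[\Lambda^s,f]g}(k)=\sum_{\ell\in\Z^d}m(k,\ell)\,\widehat f(\ell)\,\widehat g(k-\ell),\qquad m(k,\ell):=\langle k\rangle^s-\langle k-\ell\rangle^s,
\]
so that the entire argument reduces to exploiting the smoothness of the symbol difference $m(k,\ell)$.

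Next I would split the product $fg$, and correspondingly the commutator, into Bony's three pieces: the paraproduct $T_f g$ (frequency of $f$ much lower than that of $g$), the symmetric paraproduct $T_g f$, and the resonant remainder $R(f,g)$ (comparable frequencies). In the piece $T_g f$, where $f$ carries the dominant frequency $|\ell|\gtrsim\langle k\rangle$, the symbol satisfies $|m(k,\ell)|\lesssim\langle\ell\rangle^s$, so $\Lambda^s$ effectively falls on $f$; summing the dyadic blocks by Bernstein's inequality and $\ell^2$ almost orthogonality produces the bound $\|\Lambda^s f\|_{L^2}\|g\|_{L^\infty}$. The delicate piece is $T_f g$, where $g$ carries the high frequency and $|\ell|\ll|k|$: here a direct estimate would cost $s$ derivatives on $g$, which is too many. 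The key is the mean value theorem applied to the symbol, which gives $|m(k,\ell)|\lesssim|\ell|\,\langle k\rangle^{s-1}$ in this regime, so that exactly one derivative is transferred off $g$ onto $f$, yielding the factor $\|\nabla f\|_{L^\infty}\|\Lambda^{s-1}g\|_{L^2}$. The resonant term $R(f,g)$ is handled by the same mechanism, the extra summability over dyadic blocks being supplied by the hypothesis $s>0$.

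Once the dyadic contributions are summed, the commutator estimate follows. For the product estimate I would apply the identical paraproduct decomposition to $fg$ itself (with the plain multiplier $\langle k\rangle^s$ in place of the difference $m$): the low-high piece $T_f g$ is controlled by $\|f\|_{L^\infty}\|g\|_{H^s}$ through Bernstein's inequality on the low-frequency factor, the symmetric piece $T_g f$ by $\|g\|_{L^\infty}\|f\|_{H^s}$, and the remainder $R(f,g)$ by either of these, using $s>0$ to sum the geometric tail of dyadic contributions.

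The hard part will be the bookkeeping in the high-frequency-of-$g$ region of the commutator: one must verify quantitatively that the cancellation $\langle k\rangle^s-\langle k-\ell\rangle^s$ gains precisely one order of decay there, and that this single gained derivative is distributed as $\|\nabla f\|_{L^\infty}$ rather than a higher Sobolev norm of $f$ after the dyadic sums are carried out. This rests on the standard but technical kernel estimates for the paradifferential operator with symbol $m$, together with the $\ell^2$-valued square-function bounds that guarantee convergence of the sums; the product estimate, by contrast, is routine once this machinery is in place.
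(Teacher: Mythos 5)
The paper does not prove this lemma at all: it is quoted verbatim as a classical result with a citation to Kato--Ponce \cite{kato1988}, so there is no in-paper argument to measure your proposal against. Your route --- Littlewood--Paley blocks plus Bony's decomposition $fg=T_fg+T_gf+R(f,g)$, with the mean value theorem applied to the symbol difference $\langle k\rangle^s-\langle k-\ell\rangle^s$ in the low-$f$/high-$g$ regime --- is the standard modern proof of both inequalities, and it differs from the original Kato--Ponce argument, which ran through Coifman--Meyer-type bilinear multiplier estimates rather than paraproducts; the paraproduct version is the one found in, e.g., Bahouri--Chemin--Danchin, and it is the natural choice here since the paper already uses $\Delta_j$ and commutator estimates from \cite{Danchin2011} in Section 5. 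Your outline is essentially correct, with two places where the bookkeeping you flag really does carry weight. First, in the region where $f$ carries the high frequency, the contribution of the second piece of the symbol, $\langle k-\ell\rangle^s\widehat f(\ell)\widehat g(k-\ell)$, is the term $T_{\Lambda^sg}f$ (plus part of the remainder); to bound it by $\|\Lambda^sf\|_{L^2}\|g\|_{L^\infty}$ you must resum $\|S_{q-1}\Lambda^sg\|_{L^\infty}\lesssim\sum_{q'\le q-2}2^{q's}\|\Delta_{q'}g\|_{L^\infty}\lesssim2^{qs}\|g\|_{L^\infty}$, and it is exactly here (not only in $R(f,g)$) that the hypothesis $s>0$ is consumed. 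Second, the pointwise symbol bounds $|m(k,\ell)|\lesssim|\ell|\langle k\rangle^{s-1}$ and $|m(k,\ell)|\lesssim\langle\ell\rangle^s$ must still be upgraded to $L^2\times L^\infty\to L^2$ operator bounds via the kernel estimates for the associated paradifferential operator; you correctly identify this as the technical core rather than a formality. With those two points made quantitative, the argument closes, and the product estimate follows from the same decomposition with the plain multiplier, as you say.
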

\begin{proposition}\label{prop3.2}
    Let $s>\frac{d}{2}+3$, $d\ge 1$, $p> 2$ and $\alpha \in(0,\frac{p-2}{2p})$, assume that the condition \textbf{(A1)} holds, and $u_0\in L^{2p}(\Omega;H^s(\T^d))$ is an $H^s$-valued $\mathcal{F}_0$ measurable random variable. Given any $R>1$, let $\{u_{n}\}_{n\in\N}$ be the sequence of solutions to the Galerkin approximation \eqref{eq:galerkin-system}. Then the sequence $\{ u_{n}\}_{n\in\N}$ is uniformly bounded in
 $$
 L^{2p}(\Omega;C([0,T];H^s))\cap L^{2p}(\Omega;W^{\alpha,2p}([0,T];H^{s-1})).
 $$
 Moreover, we have
 \begin{equation}\label{3.2-G}
    \sup_{n\in\N}\mathbb{E }\bigg\Vert \int_0^tG_n(u_{n})\mathrm{d}\tau \bigg\Vert_{W^{1,2p}([0,T];H^{s-1})}^{2p}\le C_{s,p,T,R,u_0},
\end{equation}
and
\begin{equation}\label{3.2-F}
    \sup_{n\in\N}\mathbb{E }\bigg\Vert \int_0^tF_{n}(u_{n})\mathrm{d}W(t) \bigg\Vert_{W^{\alpha,2p}([0,T];H^{s-1})}^{2p}\le C_{s,p,T,R,u_0,\alpha}.
\end{equation}

\end{proposition}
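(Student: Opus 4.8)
The plan is to run an $H^s$ energy estimate on the Galerkin system \eqref{eq:galerkin-system}, raised to the power $2p$, and then extract the two time-regularity bounds \eqref{3.2-G} and \eqref{3.2-F} from the drift and the stochastic integral separately. Recall $S=(1-\Delta)^{-1}u_n$. Since $u_n=P_nu_n$ and $P_n$ commutes with $\Lambda^s$ and is self-adjoint on $L^2$, applying It\^o's formula to $t\mapsto\|u_n(t)\|_{H^s}^{2p}=(\|\Lambda^su_n\|_{L^2}^2)^p$ yields
\begin{align*}
\dd\|u_n\|_{H^s}^{2p}
&=p\|u_n\|_{H^s}^{2p-2}\Big(-2(u_n,G_n(u_n))_{H^s}+\|F_n(u_n)\|_{\mathcal{L}_2(\mathfrak{U};H^s)}^2\Big)\dd t\\
&\quad+2p(p-1)\|u_n\|_{H^s}^{2p-4}\sum_{k\ge1}(u_n,F_n(u_n)e_k)_{H^s}^2\,\dd t\\
&\quad+2p\|u_n\|_{H^s}^{2p-2}(u_n,F_n(u_n)\,\dd W)_{H^s}.
\end{align*}
The diffusion contributions are immediate from \eqref{A1} and \eqref{N-bnd}, since on $\{\theta_R\neq0\}$ the cut-off forces $\|u_n\|_{W^{1,\infty}}\le2R$ and hence $\|F_n(u_n)\|_{\mathcal{L}_2(\mathfrak{U};H^s)}\le C_R(1+\|u_n\|_{H^s})$; so the whole scheme hinges on bounding the drift by $C_{R,p}(1+\|u_n\|_{H^s}^{2p})$, which in turn reduces to showing $|(u_n,G_n(u_n))_{H^s}|\le C_R(1+\|u_n\|_{H^s}^2)$.

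The core is the transport part. Writing $b:=(1-2u_n)\nabla S$ and commuting $\Lambda^s$ past $b\cdot\nabla$, I would split
\[
(\Lambda^su_n,\Lambda^s(b\cdot\nabla u_n))_{L^2}=(\Lambda^su_n,[\Lambda^s,b]\cdot\nabla u_n)_{L^2}+(\Lambda^su_n,b\cdot\nabla\Lambda^su_n)_{L^2},
\]
integrate the last term by parts into $-\tfrac12(\mathrm{div}\,b\,\Lambda^su_n,\Lambda^su_n)_{L^2}$, and estimate the commutator with Lemma \ref{lem-kato}. The decisive point is that every $L^\infty$-type factor is controlled by the cut-off alone: from $\Delta S=S-u_n$ together with the fact that $(1-\Delta)^{-1}$ and $\nabla(1-\Delta)^{-1}$ are convolutions against integrable (periodized) Bessel kernels, one obtains
\[
\|\nabla S\|_{L^\infty}\lesssim\|u_n\|_{L^\infty},\qquad\|\Delta S\|_{L^\infty}\le2\|u_n\|_{L^\infty},\qquad\|\nabla^2S\|_{L^\infty}\lesssim\|\nabla u_n\|_{L^\infty},
\]
whence $\|b\|_{L^\infty}+\|\nabla b\|_{L^\infty}\le C_R$ on $\{\theta_R\neq0\}$. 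Combining this with $\|b\|_{H^s}\le C_R(1+\|u_n\|_{H^s})$ (Lemma \ref{lem-algebra}, Lemma \ref{lem-kato}, and the elliptic gain $\|\nabla S\|_{H^s}\lesssim\|u_n\|_{H^s}$) controls both the commutator and the divergence term by $C_R(1+\|u_n\|_{H^s}^2)$; the lower-order piece $(u_n-u_n^2)\Delta S$ is handled the same way. The identical arithmetic in $H^{s-1}$ gives $\|G_n(u_n)\|_{H^{s-1}}\le C_R(1+\|u_n\|_{H^s})$, which is what \eqref{3.2-G} requires.

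With the drift controlled, I would integrate in time, take $\sup_{[0,t]}$, and apply the Burkholder–Davis–Gundy inequality to the martingale term; using $\sum_k(u_n,F_n(u_n)e_k)_{H^s}^2\le\|u_n\|_{H^s}^2\|F_n(u_n)\|_{\mathcal{L}_2}^2$ and Young's inequality, the emerging $\mathbb{E}\sup_{[0,t]}\|u_n\|_{H^s}^{2p}$ can be absorbed on the left, leaving a Gronwall inequality for $t\mapsto\mathbb{E}\sup_{\tau\in[0,t]}\|u_n(\tau)\|_{H^s}^{2p}$ whose constants are independent of $n$ (because $\|P_nu_0\|_{H^s}\le\|u_0\|_{H^s}$). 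This gives the $L^{2p}(\Omega;C([0,T];H^s))$ bound. Then \eqref{3.2-G} follows since $t\mapsto\int_0^tG_n(u_n)\dd\tau$ has time-derivative $G_n(u_n)$ and $\mathbb{E}\int_0^T\|G_n(u_n)\|_{H^{s-1}}^{2p}\dd t$ is controlled by the bound just obtained; for \eqref{3.2-F} I would invoke the standard Flandoli–Gatarek fractional-regularity estimate for stochastic integrals, whose applicability is exactly guaranteed by the restriction $\alpha\in(0,\frac{p-2}{2p})$, reducing the $W^{\alpha,2p}([0,T];H^{s-1})$ norm to $\mathbb{E}\int_0^T\|F_n(u_n)\|_{\mathcal{L}_2(\mathfrak{U};H^{s-1})}^{2p}\dd t\le C_{R,p}\,\mathbb{E}\int_0^T(1+\|u_n\|_{H^s}^{2p})\dd t$. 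Summing the constant-in-time, drift, and stochastic-integral contributions gives the uniform $W^{\alpha,2p}([0,T];H^{s-1})$ bound.

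The main obstacle is precisely the $H^s$ drift estimate. A naive bound of the transport term produces $\|\nabla^2S\|_{L^\infty}\|u_n\|_{H^s}^2$, and since $\nabla^2(1-\Delta)^{-1}$ is not bounded on $L^\infty$, estimating $\|\nabla^2S\|_{L^\infty}$ by Sobolev embedding would cost an extra factor $\|u_n\|_{H^s}$ and force cubic growth, which Gronwall cannot close. The resolution—the one delicate point—is to write $\nabla^2S=\nabla(1-\Delta)^{-1}(\nabla u_n)$ and use the $L^1$-integrability of the gradient of the Bessel kernel to bound $\|\nabla^2S\|_{L^\infty}\lesssim\|\nabla u_n\|_{L^\infty}\le2R$ on $\{\theta_R\neq0\}$. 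This is what keeps the drift quadratic in $\|u_n\|_{H^s}$ and makes the estimate close uniformly in $n$.
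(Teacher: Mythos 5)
Your proposal is correct and follows essentially the same route as the paper: It\^o's formula applied to $\|u_n\|_{H^s}^{2p}$, a commutator/Kato--Ponce estimate for the drift in which every $L^\infty$-type factor is absorbed by the cut-off (the paper realizes the elliptic gain via $\Delta(1-\Delta)^{-1}=(1-\Delta)^{-1}-1$ and the commutator bound $C_{s,R}(\|u_n\|_{L^\infty}+\|\nabla u_n\|_{L^\infty})\|u_n\|_{H^s}^2$, the same mechanism as your Bessel-kernel observation), followed by Burkholder--Davis--Gundy, Gronwall, and the fractional-regularity estimate for the stochastic integral under $\alpha\in(0,\tfrac{p-2}{2p})$. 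The only cosmetic differences are that the paper first runs the second-moment estimate before the $2p$-th moment and proves the $W^{\alpha,2p}$ bound for the stochastic integral by a hand-rolled argument rather than citing the Flandoli--Gatarek lemma by name.
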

\begin{proof}\textsc{Step 1:}
Applying the operator $\Lambda^s$ to \eqref{eq:galerkin-system}, and then using the It\^o formula, we derive
\begin{equation}
    \begin{aligned}
        \Vert u_{n}(t)\Vert_{H^s}^2-\Vert u_{0}\Vert_{H^s}^2&=2 \sum_{k \geq 1} \int_0^t(\Lambda^{s}(F_{n}(u_{n})e_k), \Lambda^{s}u_{n})_{2} dW_k\\
        &\quad\ -2\int_0^t(\Lambda^s G_{n}(u_{n}),\Lambda^s u_{n})_2\mathrm{d}\tau+\int_0^t \Vert \Lambda^s F_{n}(u_{n})\Vert_{\mathcal{L}_2(\mathfrak{U};L^2)}^2\mathrm{d}\tau\\
        &=\sum_{k\ge 1}\int_0^t I_{1_k}\mathrm{d}W_k+\int_0^t I_2\mathrm{d}\tau+\int_0^t I_3\mathrm{d}\tau.
    \end{aligned}
\end{equation}
Taking a supremum over $t\in[0,T ]$ and using the Burkholder-Davis-Gundy inequality, we obtain
\begin{equation}\label{pp3.2-1}
    \mathbb{E}\sup_{t\in[0,T]}\Vert u_{n}(t)\Vert_{H^s}^2-\mathbb{E}\Vert u_{0}\Vert_{H^s}^2\le C\mathbb{E}\sum_{k\ge 1}\bigg(\int_0^T\vert I_{1_k}\vert^2 \mathrm{d}t \bigg)^{\frac{1}{2}}+\int_0^T\mathbb{E}\vert I_2\vert\mathrm{d}t +\int_0^T \mathbb{E}\vert I_3 \vert \mathrm{d}t.
\end{equation}
To bound the first term on the right-hand side of \eqref{pp3.2-1}, we use the assumption (\textbf{A1}) and the Cauchy-Schwarz inequality
\begin{equation}\label{pp3.2-estimate-J1}
\begin{aligned}
    \mathbb{E}\sum_{k\ge 1}\bigg(\int_0^T\vert I_{1_k}\vert^2 \mathrm{d}t \bigg)^{\frac{1}{2}}&= 2\mathbb{E}\sum_{k\ge1}\bigg(\int _0^T\vert\theta_R(\Vert u_{n}\Vert_{W^{1,\infty}})(\Lambda^s \sigma(t,u_{n})e_k,\Lambda^s u_{n})_2\vert^2\mathrm{d}t\bigg)^{\frac{1}{2}}\\
    &\le 2 \mathbb{E}\bigg(\int_0^T\theta_R^2(\Vert u_{n}\Vert_{W^{1,\infty}})\Vert \sigma(t,u)\Vert_{\mathcal{L}_2(\mathfrak{U};H^s)}^2\Vert u_{n}\Vert_{H^s}^2\mathrm{d}t\bigg)^{\frac{1}{2}}\\
    &\le \frac{1}{2}\mathbb{E}\sup_{t\in[0,T]}\Vert u_{n}\Vert_{H^s}^2+C\beta^2(2R)\int_0^T(1+\mathbb{E}\Vert u_{n}\Vert_{H^s}^2)\mathrm{d}t.
\end{aligned}
\end{equation}
 For the second term of \eqref{pp3.2-1},
utilizing the properties of $P_n$, we first express $I_2$ as
\begin{equation}\label{pp3.2-2}
     \begin{aligned}
         I_2&=-2\theta_R(\Vert u_{n}\Vert_{W^{1,\infty}})\int_{\mathbb{T}^d}\Lambda^s u_{n}\cdot\Lambda^s[P_{n}(P_{n}((1-2u_{n})\nabla S(u_{n}))\cdot\nabla P_{n}u_{n})]\mathrm{d}x\\
         &\quad\ -2\theta_R(\Vert u_{n}\Vert_{W^{1,\infty}})\int_{\mathbb{T}^d}\Lambda^s u_{n}\cdot\Lambda^s(u_{n}-u_{n}^2)\Delta S(u_{n})\mathrm{d}x\\
         &=I_{21}+I_{22},
     \end{aligned}
\end{equation}
where $S(u_{n})=(1-\Delta)^{-1}u_{n}$. Then we commute the operator $\Lambda^s$ with $P_{n}((1-2u_{n})\nabla S(u_{n}))$ and use the fact that $(P_{n}u,v)_2=(u,P_{n}v)_2$ for any $u,v\in L^2(\mathbb{T}^d)$
\begin{equation}
    \begin{aligned}
        |I_{21}|
        &=2\theta_R(\Vert u_{n}\Vert_{W^{1,\infty}})\bigg|\int_{\mathbb{T}^d}\Lambda^s P_{n}u_{n}\cdot \Lambda^s [P_{n}((1-2u_{n})\nabla S(u_{n}))\cdot\nabla P_{n}u_{n}]\mathrm{d}x\bigg|\\
        &\le 2\theta_R(\Vert u_{n}\Vert_{W^{1,\infty}})\int_{\mathbb{T}^d}\vert\Lambda^s P_{n}u_{n}\cdot[\Lambda^s,P_{n}((1-2u_{n})\nabla S(u_{n}))]\nabla P_{n}u_{n}\vert\mathrm{d}x\\
        &\quad \ +2\theta_R(\Vert u_{n}\Vert_{W^{1,\infty}})\int_{\mathbb{T}^d}\vert\Lambda^s P_{n}u_{n}\cdot P_{n}((1-2u_{n})\nabla S(u_{n}))\nabla \Lambda^sP_{n}u_{n}\vert\mathrm{d}x,\\
    \end{aligned}
\end{equation}
applying Lemma \ref{lem-kato} and the estimate $\Vert P_{n}u_{n}\Vert_{L^{\infty}}\le\Vert u_{n}\Vert_{L^{\infty}}$ for any $u_{n}\in L^{\infty}(\mathbb{T}^d)$, we obtain
\begin{equation}\label{pp3.2-3}
    \begin{aligned}
    |I_{21}|
        &\le C_{s,R}\Vert \Lambda^s P_{n}u_{n}\Vert_{L^2}\Vert[\Lambda^s,P_{n}((1-2u_{n})\nabla S(u_{n}))]\nabla P_{n}u_{n}\Vert_{L^2}\\
        &\quad \ +C_{s,R}\Vert \Lambda^s P_{n} u_{n}\Vert_{L^2}\Vert P_{n}((1-2u_{n}\nabla S(u_{n}))\Vert_{L^2}\Vert \nabla \Lambda^s P_{n} u_{n}\Vert_{L^2}\\
        &\le C_{s,R}(\Vert u_{n}\Vert_{L^{\infty}}+\Vert \nabla u_{n}\Vert_{L^{\infty}})\Vert u_{n}\Vert_{H^s}^2.
    \end{aligned}
\end{equation}
Again, we use Lemma \ref{lem-kato} to deduce that
\begin{equation}\label{pp3.2-4}
    \begin{aligned}
        |I_{22}|
        &\le C_{s,R}\Vert u_{n}\Vert_{H^s}(\Vert u_{n}-u_{n}^2\Vert_{H^s}\Vert\Delta(1-\Delta)^{-1}u_{n}\Vert_{L^{\infty}} +\Vert u_{N
        }-u_{n}^2\Vert_{L^{\infty}}\Vert\Delta(1-\Delta)^{-1}u_{n}\Vert_{H^s})\\
        &\le C_{s,R} \Vert u_{n}\Vert_{L^{\infty}}^2\Vert u_{n}\Vert_{H^s}^2.
    \end{aligned}
\end{equation}
Therefore, taking \eqref{pp3.2-3} and \eqref{pp3.2-4} into \eqref{pp3.2-2}, we get
\begin{equation}\label{pp3.2-estimate-J2}
    \mathbb{E}\int_0^t\vert I_2\vert\mathrm{d}\tau\le C_{s,R}\int_0^t\mathbb{E}\Vert u_{n}\Vert_{H^s}^2\mathrm{d}\tau.
\end{equation}
For $I_3$, it follows from the assumptions of $\sigma(t,\cdot)$ that
\begin{equation}\label{pp3.2-estimate-J3}
    \begin{aligned}
        \mathbb{E}\int_0^t\vert I_3\vert\mathrm{d}\tau&\le 2\mathbb{E}\int_0^t\theta_R(\Vert u_{n}\Vert_{W^{1,\infty}})\Vert u_{n}\beta^2(\Vert u_{n}\Vert_{W^{1,\infty}})(1+\Vert u_{n}\Vert_{H^s}^2)\mathrm{d}\tau\\
        &\le 2\beta^2(2R)\int_0^t(1+\Vert u_{n}\Vert_{H^s}^2)\mathrm{d}\tau.
    \end{aligned}
\end{equation}
Combining \eqref{pp3.2-estimate-J1}, \eqref{pp3.2-estimate-J2} and \eqref{pp3.2-estimate-J3}, we can estimate \eqref{pp3.2-1} by
\begin{equation}
    \mathbb{E}\sup_{t\in[0,T]}\Vert u_{n}\Vert_{H^s}^2\le 2\mathbb{E}\Vert u_0\Vert_{H^s}^2+C_{s,R}\int_0^T\bigg(1+\mathbb{E}\sup_{\tau\in[0,t]}\Vert u_{n}(\tau)\Vert_{H^s}^2 \bigg)\mathrm{d}t,
\end{equation}
which leads to $\mathbb{E}\sup_{t\in[0,T]}\Vert u_{n}(t)\Vert_{H^s}^2\le C_{s,R,u_0,T}.$

\textsc{Step 2:} We deal with the high-order moment estimates for $\Vert u_{n}\Vert_{H^s}^{2p}$, $p\ge 2$. For this purpose, we fix any $p$ and compute $\mathrm{d}\Vert u_{n}\Vert_{H^s}^{2p}=\mathrm{d}(\Vert u_{n}\Vert_{H^s}^2)^p$ by using the it\^o formula
\begin{equation}
    \begin{aligned}
        \|u_{n}\|_{H^s}^{2p}-\|u_0\|_{H^s}^{2p} &=p\|u_{n}\|_{H^s}^{2p-2}\bigg(\sum_{k\ge 1}\int_0^t I_{1_k}\mathrm{d}W_k+\int_0^t I_2\mathrm{d}\tau+\int_0^t I_3\mathrm{d}\tau\bigg)\\
        &\quad\ \quad\ +\frac{p(p-1)}{2}\sum_{k\ge 1}\int_0^t \|u_{n}\|_{H^s}^{4(p-1)}I_{1_k}^2\mathrm{d} \tau.
    \end{aligned}
\end{equation}
Following a similar procedure, we apply the Burkholder-Davis-Gundy inequality upon taking the supremum over $t \in [0, T]$, which yields
    \begin{align}\label{pp3.2-Lp1}
        &\mathbb{E}\sup_{t\in[0,T]}\Vert u_{n}(t)\Vert_{H^s}^{2p}-\mathbb{E}\Vert u_0\Vert_{H^s}^{2p}\nonumber\\
        &\quad\ \le C_{p}\bigg(\mathbb{E}\sum_{k\ge 1}\bigg(\int_0^T \Vert u_{n}\Vert_{H^s}^{4(p-1)}\vert I_{1_k}\vert^2\mathrm{d}t \bigg)^{\frac{1}{2}}+\int_0^T\mathbb{E}(\Vert u_{n}\Vert_{H^s}^{2(p-1)}\vert I_2\vert)\mathrm{d}t\\
        &\quad\ \quad\ +\int_0^T\mathbb{E}(\Vert u_{n}\Vert_{H^s}^{2(p-1)}\vert I_3\vert)\mathrm{d}t+\int_0^T\mathbb{E}\sum_{k\ge 1}(\Vert u_{n}\Vert_{H^s}^{2(p-2)}\vert I_{1_k}\vert^2)\mathrm{d}t\bigg).\nonumber
    \end{align}
A similar estimate to \eqref{pp3.2-estimate-J1} gives
     \begin{align}
        &\mathbb{E}\sum_{k\ge 1}\bigg(\int_0^T \Vert u_{n}\Vert_{H^s}^{4(p-1)}\vert I_{1_k}\vert^2\mathrm{d}t \bigg)^{\frac{1}{2}}\nonumber\\
        &\quad\ \le C\mathbb{E}\bigg( \int_0^T\Vert u_{n}\Vert_{H^s}^{2p}\theta_R^2(\Vert u_{n}\Vert_{W^{1,\infty}})\beta^2(\Vert u_{n}\Vert_{W^{1,\infty}})(1+\Vert u_{n}\Vert_{H^s}^{2p})\mathrm{d}t\bigg)^{\frac{1}{2}}\nonumber\\
          &\quad \ \le C\mathbb{E}\bigg(\sup_{t\in[0,T]}\Vert u_{n}\Vert_{H^s}^{2p}\int_0^T\theta_R^2(\Vert u_{n}\Vert_{W^{1,\infty}})\beta^2(\Vert u_{n}\Vert_{W^{1,\infty}})(1+\Vert u_{n}\Vert_{H^s}^{2p})\mathrm{d}t\bigg)^{\frac{1}{2}}\\
        &\quad\ \le\frac{1}{2}+C\beta^2(2R)\int_0^T(1+\mathbb{E}\Vert u_{n}\Vert_{H^s}^{2p})\mathrm{d}t.\nonumber
    \end{align}
Using estimates analogous to those in \eqref{pp3.2-estimate-J2} and \eqref{pp3.2-estimate-J3}, we have
\begin{equation}
    \int_0^t\mathbb{E}\Vert u_{n}\Vert_{H^s}^{2(p-1)}\vert I_2\vert\mathrm{d}\tau\le C_{p,R}\int_0^t(1+\mathbb{E}\Vert u_{n}\Vert_{H^s}^{2p})\mathrm{d}\tau,
\end{equation}
\begin{equation}
    \int_0^t\mathbb{E}\Vert u_{n}\Vert_{H^s}^{2(p-1)}\vert I_3\vert\mathrm{d}\tau\le C_{p,R}\int_0^t(1+\mathbb{E}\Vert u_{n}\Vert_{H^s}^{2p})\mathrm{d}\tau,
\end{equation}
\begin{equation}\label{pp3.2-Lp2}
    \sum_{k\ge 1}\int_0^t\mathbb{E}\Vert u_{n}\Vert_{H^s}^{2(p-2)}\vert I_{1_k}\vert^2\mathrm{d}\tau\le C_{p,R}\int_0^t(1+\mathbb{E}\Vert u_{n}\Vert_{H^s}^{2p})\mathrm{d}\tau,
\end{equation}
we deduce from the estimates \eqref{pp3.2-Lp1}-\eqref{pp3.2-Lp2} that
\begin{equation}
    \mathbb{E}\sup_{t\in[0,T]}\Vert u_{n}\Vert_{H^s}^{2p}\le 2\mathbb{E}\Vert u_0\Vert_{H^s}^{2p}+C_{s,p,R}\int_0^T\bigg(1+\mathbb{E}\sup_{\tau\in[0,t]}\Vert u_{n}(\tau)\Vert_{H^s}^{2p} \bigg)\mathrm{d}t.
\end{equation}
By applying the Gronwall inequality we get
\begin{equation}\label{L2p-estimate}
    \mathbb{E}\sup_{t\in[0,T]}\Vert u_{n}\Vert_{H^s}^{2p}\le C_{s,p,R,T,u_0,p}.
\end{equation}
Moreover, the classical theory of SDEs with Lipschitz coefficients guarantees that $u_{n}$ is continuous in time, combining \eqref{L2p-estimate} and the Kolmogorov's continuity theorem, we find that $u_{n}\in L^{2p}(\Omega;C^{\alpha}([0,T];H^{s-1}))$ is bounded uniformly in $n$ for $\alpha \in\bigg(0,\frac{p-2}{2p}\bigg)$.

\textsc{Step 3:} In stochastic case, however, differentiability in time cannot generally be expected. Instead, we have to consider estimates involving fractional time derivatives of order less than $\frac{1}{2}$. Notice the fact that $W^{1,p}\hookrightarrow W^{\alpha,p}$ for any $\alpha\in(0,1)$, and the inequality $(a+b)^p\le 2^{p-1}(a^p+b^p)$, we obtain
    \begin{align}\label{3parts}
        \|u_{n}\|_{W^{\alpha,2p}([0,T];H^{s-1})}^{2p}&\le 3^{2p-1}\bigg(\|u_0\|_{H^s}^{2p}+\bigg\|\int_0^t G_{n}(u_{n}(r))\mathrm{d}r \bigg\|_{W^{1,2p}([0,T];H^{s-1})}^{2p}\\
        &\quad\ +\bigg\|\int_0^t F_{n}(u_{n}(r))\mathrm{d}W(r)\bigg\|_{W^{\alpha,2p}([0,T];H^{s-1})}^{2p}\bigg).\nonumber
    \end{align}
Applying the Minkowski inequality, Lemma \ref{lem-kato} and \eqref{L2p-estimate}, we deduce that
    \begin{align*}
        &\mathbb{E}\bigg\|\int_0^t G_{n}(u_{n}(r))\mathrm{d}r \bigg\|_{W^{1,2p}([0,T];H^{s-1})}^{2p}\\
        &\quad\ \le C_{p,T,R}\mathbb{E}\int_0^T(\|P_{n}((1-2u_{n})\nabla S(u_{n})\cdot \nabla u_{n})\|_{H^{s-1}}^{2p} +\|(u_{n}-u_{n}^2)\Delta S(u_{n})\|_{H^{s-1}}^{2p})\mathrm{d}t\\
        &\quad\ \le C_{s,p,T,R}\mathbb{E}\int_0^T(\|u_{n}\|_{L^{\infty}}+\|\nabla u_{n}\|_{L^{\infty}})^{4p}\|u_{n}\|_{H^{s-1}}^{2p}\mathrm{d}t\\
        &\quad\ \le C_{s,p,T,R}\int_0^T\mathbb{E}\|u_{n}\|_{H^{s-1}}^{2p}\mathrm{d}t\le C_{s,p,T,R,u_0}.\\
    \end{align*}
For the term involving $F_{n}(u_{n})$, we notice that for any $k>0$, there exists a interval $[t_k,\widetilde{t}_k]$ ($0\le t_k< \widetilde{t}_k\le T$), such that
$
    \sup_{0\le t<\widetilde{t}\le T}\frac{\|\int_t^{\widetilde{t}} F_{n}(u_{n})\mathrm{d}W(r)\|_{H^{s-1}}^{2p}}{|t-\widetilde{t}|^{1+2\alpha p}}\le \frac{\|\int_{t_k}^{\widetilde{t}_k} F_{n}(u_{n})\mathrm{d}W(r)\|_{H^{s-1}}^{2p}}{|t_k-\widetilde{t}_k|^{1+2\alpha p}}+k.
$
After taking the expectation on both sides of the above formula, we use \eqref{L2p-estimate} and the Burkholder-Davis-Gundy inequality again, we obtain
\begin{equation}
    \begin{aligned}
        &\mathbb{E}\sup_{0\le t<\widetilde{t}\le T}\frac{\|\int_t^{\widetilde{t}} F_{n}(u_{n}\mathrm{d}W(r)\|_{H^{s-1}}^{2p}}{|t-\widetilde{t}|^{1+2\alpha p}}\\
        &\quad\ \le \mathbb{E}\frac{\|\int_{t_k}^{\widetilde{t}_k} F_{n}(u_{n}\mathrm{d}W(r)\|_{H^{s-1}}^{2p}}{|t_k-\widetilde{t}_k|^{1+2\alpha p}}+k\\
        &\quad\ \le C_{p}|t_k-\widetilde{t}_k|^{-1-2\alpha p}\mathbb{E}\bigg(\int_{t_k}^{\widetilde{t}_k} \theta_R(\|u_{n}\|_{W^{1,\infty}})\beta(\|u_{n}\|_{W^{1,\infty}}(1+\|u_{n}\|_{H^{s-1}}^2))\mathrm{d}r\bigg)^{p}+k\\
        &\quad\ \le C_{p,R}|t_k-\widetilde{t}_k|^{-1-2\alpha p}(1+\mathbb{E}\sup_{r\in[0,T]}\|u_{n}\|_{H^{s-1}}^{2p})+k\\
        &\quad\ \le C_{s,p,T,R,u_0,\alpha}+k.
    \end{aligned}
\end{equation}
Since $k>0$ can be taken arbitrarily, and $p>1+2\alpha p$, thus we have
\begin{equation}
    \begin{aligned}
    \mathbb{E}\int_0^T\int_0^T\frac{\|\int_t^{\widetilde{t}}F_{n}(u_{n})\mathrm{d}W(r)\|_{H^{s-1}}^{2p}}{|t-\widetilde{t}|^{1+2\alpha p}}\mathrm{d} t\mathrm{d} \widetilde{t}\le C_{s,p,T,R,u_0},
    \end{aligned}
\end{equation}
combining with the estimate \eqref{L2p-estimate}, we deduce that
\begin{equation}
    \mathbb{E}\bigg\|\int_0^t F_{n}(u_{n})\mathrm{d}W(r)\bigg\|_{W^{\alpha,2p}([0,T];H^{s-1})}^{2p}\le C_{s,p,T,R,u_0,\alpha}.
\end{equation}
According to \eqref{3parts} and the fact that $u_0\in L^{2p}(\Omega;H^s(\mathbb{T}^d))$, we   get that
$
    \mathbb{E}\|u_{n}\|_{W^{\alpha,2p}([0,T];H^{s-1})}^{2p}\le C_{s,p,T,R,u_0,\alpha},
$
which completes the proof of Proposition \ref{prop3.2}.
\end{proof}

\subsection{Tightness of measures}
For a given initial distribution $\mu_0$ on $H^s$, we fix a stochastic basis $\mathcal{S}=(\Omega,\mathcal{F},\mathbb{P},\{\mathcal{F}_t\}_{t \ge 0},W)$ upon which is defined an $\mathcal{F}_0$-measurable random element $u_0$ with
distribution $\mu_0$. In order to define a sequence of measures associated with $\{(u_{n},W)\}_{N >0}$, we consider the phase space
$$
\mathcal{X}=\mathcal{X}_S\times \mathcal{X}_{W},\quad \text{where}\ \mathcal{X}_S=C([0,T];H^{s-1}(\mathbb{T}^d)),\ \mathcal{X}_W=C([0,T];\mathfrak {U}).
$$
We may consider the first component, $\mathcal{X}_S\supset C([0,T];H^{s}(\mathbb{T}^d))$, and the second component, $\mathcal{X}_W$, as being the space on which the driving Brownian motions are defined. On $\mathcal{X}$ we define the probability measures
\begin{equation}\label{def-mu}
    \mu_{n}=\mu_{n}^S\times\mu_W, \quad \ \text{where}\ \mu_{n}^S=\mathbb{P}(\mu_{n}\in\cdot), \ \mu_W(\cdot)=\mathbb{P}(W\in\cdot).
\end{equation}

Let $\mathcal{P}r(\mathcal{X})$ be the collection of Borel probability measures on $\mathcal{X}$. Recalling that a set $\mathcal{O}\subset \mathcal{P}r(\mathcal{X})$ is \emph{tight} if, for every $\theta>0$, there exists a compact set $K_{\gamma}\subset \mathcal{X}$ such that, $\mu(K_{\gamma})\ge 1-\gamma$ for all $\mu \in \mathcal{O}$.
\begin{lemma}[Compactness Theorem \cite{flandoli1995}] \label{flandoli1995}
    Suppose that $B_0$, $B$ are Banach spaces, and the embedding from $B_0$ into $B$ is compact. Let $p\in(1,\infty)$ and $\alpha\in(0,1]$ be such that $\alpha p > 1$.
Then $W^{\alpha,p}([0, T] ; B_0) \subset C([0, T ]; B)$ and the embedding is compact.
\end{lemma}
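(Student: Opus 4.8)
The plan is to derive the statement from the Arzelà--Ascoli theorem for $B$-valued continuous functions, after first upgrading the fractional-in-time regularity to genuine Hölder continuity. Since a (linear) embedding is compact exactly when it maps bounded sets to relatively compact ones, it suffices to fix a set $\mathcal{B}$ bounded in $W^{\alpha,p}([0,T];B_0)$, say $\sup_{f\in\mathcal{B}}\|f\|_{W^{\alpha,p}([0,T];B_0)}\le M$, and show that $\mathcal{B}$ is relatively compact in $C([0,T];B)$. I would establish this by verifying the two Arzelà--Ascoli hypotheses: that $\mathcal{B}$ is equicontinuous in $C([0,T];B)$, and that for each fixed $t$ the slice $\{f(t):f\in\mathcal{B}\}$ is relatively compact in $B$. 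The compactness of $B_0\hookrightarrow B$ is used only in the second point, while the equicontinuity is a purely time-regularity statement. Taking $\mathcal{B}$ to be the unit ball then yields both the continuous inclusion into $C([0,T];B)$ and its compactness at once.

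The heart of the argument---and the step I expect to be the main obstacle---is the fractional Morrey-type embedding $W^{\alpha,p}([0,T];B_0)\hookrightarrow C^{\gamma}([0,T];B_0)$ with $\gamma>0$, which is available precisely because $\alpha p>1$. The difficulty is that the Gagliardo seminorm
$$
[f]^p=\int_0^T\!\!\int_0^T\frac{\|f(t)-f(\widetilde t)\|_{B_0}^p}{|t-\widetilde t|^{1+2\alpha p}}\,\mathrm{d}t\,\mathrm{d}\widetilde t
$$
controls time increments only in an averaged, integral sense, whereas equicontinuity requires a genuine pointwise modulus of continuity. I would bridge this gap with the Garsia--Rodemich--Rumsey inequality, applied with modulus $\Psi(x)=x^p$ and gauge $\varphi(u)=u^{2\alpha+1/p}$, for which $\int_0^T\!\!\int_0^T\Psi(\|f(t)-f(\widetilde t)\|_{B_0}/\varphi(|t-\widetilde t|))\,\mathrm{d}t\,\mathrm{d}\widetilde t=[f]^p$. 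The inequality converts finiteness of this double integral into the pointwise estimate $\|f(t)-f(\widetilde t)\|_{B_0}\le C\,[f]\,|t-\widetilde t|^{\gamma}$ for a.e.\ pair, where a short computation of the resulting one-dimensional integral gives $\gamma=2\alpha-1/p>0$ (the exponent being integrable near the origin exactly when $2\alpha p>1$, which is implied by $\alpha p>1$). Continuity of the distinguished representative then follows. Alternatively one may cite the standard embedding theorem for Sobolev--Slobodeckij spaces directly.

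With the Hölder estimate in hand the remaining steps are routine. A uniform sup bound follows from the $L^p$ control: since $\|f\|_{L^p([0,T];B_0)}\le M$ there is a time $t_0$ with $\|f(t_0)\|_{B_0}\le M\,T^{-1/p}$, whence $\sup_{t\in[0,T]}\|f(t)\|_{B_0}\le M\,T^{-1/p}+C M\,T^{\gamma}=:M'$ uniformly over $\mathcal{B}$. Because $B_0\hookrightarrow B$ is continuous, the Hölder bound transfers to $\|f(t)-f(\widetilde t)\|_{B}\le C'\,M\,|t-\widetilde t|^{\gamma}$, so $\mathcal{B}$ is equicontinuous in $C([0,T];B)$; it is also uniformly bounded in $C([0,T];B)$ by $\|\cdot\|_{B_0\hookrightarrow B}\,M'$. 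For each fixed $t$, the slice $\{f(t):f\in\mathcal{B}\}$ is bounded by $M'$ in $B_0$ and hence, by the compactness of $B_0\hookrightarrow B$, relatively compact in $B$. The Arzelà--Ascoli theorem then yields relative compactness of $\mathcal{B}$ in $C([0,T];B)$, which establishes both the continuous inclusion $W^{\alpha,p}([0,T];B_0)\subset C([0,T];B)$ and the compactness of the embedding.
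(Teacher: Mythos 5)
The paper does not prove this lemma; it is quoted verbatim as a known result from the cited reference of Flandoli and Gatarek, so there is no in-paper argument to compare against. Your proof is correct and is essentially the standard one for this compactness theorem: the Garsia--Rodemich--Rumsey inequality (or equivalently the fractional Morrey embedding) upgrades the Gagliardo seminorm to a H\"older modulus in $B_0$ with exponent $\gamma=2\alpha-1/p>0$ (correctly tracked against the paper's normalization $|t-\widetilde t|^{1+2\alpha p}$ of the seminorm), and then Arzel\`a--Ascoli combined with the compactness of $B_0\hookrightarrow B$ for the pointwise slices gives relative compactness in $C([0,T];B)$. The only point worth stating explicitly is that GRR is applied to the continuous representative (or to a.e.\ pairs first), which you do note, so the argument is complete.
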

Based on the above compactness criteria, we have the following tightness result.

\begin{proposition}\label{prop3.3}
    Let $s >\frac{d}{2}+3$, $p>2$, $R\ge 1$ and $\mu_0\in \mathcal{P}r(H^{s})$ with $\int_{H^{s}}\Vert u\Vert_{H^{s}}^{p}\mathrm{d}\mu_0(u)<\infty$. Suppose that \textbf{(A1)} holds, and $u_0$ is a $\mathcal{F}_0$-measurable random variable defined on the stochastic basis $\mathcal{S}$ with distribution $\mu_0$. Then the sequence $\{\mu_{n}\}_{n\in\N}$ defined by \eqref{def-mu} is tight in $\mathcal{P}r(\mathcal{X} )$.
\end{proposition}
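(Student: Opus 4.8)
The plan is to exploit the fact that $\mu_{n}=\mu_{n}^{S}\times\mu_{W}$ is a product measure, so that joint tightness on $\mathcal{X}=\mathcal{X}_{S}\times\mathcal{X}_{W}$ follows once each marginal is tight. The $\mathcal{X}_{W}$-marginal is immediate: $\mu_{W}=\mathbb{P}(W\in\cdot)$ is a single fixed Borel probability measure on the Polish space $\mathcal{X}_{W}=C([0,T];\mathfrak{U})$, hence automatically tight. All the work therefore concerns the first marginal $\mu_{n}^{S}=\mathbb{P}(u_{n}\in\cdot)$ on $\mathcal{X}_{S}=C([0,T];H^{s-1}(\mathbb{T}^{d}))$, and here I would build the compact exhaustion directly from the uniform moment bounds established in Proposition \ref{prop3.2}.

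Concretely, for $M>0$ I set
$$
K_{M}:=\Big\{v\in\mathcal{X}_{S}:\ \|v\|_{C([0,T];H^{s})}\le M,\ \|v\|_{W^{\alpha,2p}([0,T];H^{s-1})}\le M\Big\},
$$
and denote by $\overline{K_{M}}$ its closure in $\mathcal{X}_{S}$. Assuming for the moment that $\overline{K_{M}}$ is compact, Markov's inequality combined with the two uniform estimates of Proposition \ref{prop3.2} yields a constant $C$ independent of $n$ with
$$
\mu_{n}^{S}(\mathcal{X}_{S}\setminus K_{M})\le \mathbb{P}\big(\|u_{n}\|_{C([0,T];H^{s})}>M\big)+\mathbb{P}\big(\|u_{n}\|_{W^{\alpha,2p}([0,T];H^{s-1})}>M\big)\le \frac{C}{M^{2p}}.
$$
Given $\varepsilon>0$ I then choose $M=M(\varepsilon)$ so that $C/M^{2p}<\varepsilon$, whence $\mu_{n}^{S}(\overline{K_{M}})\ge 1-\varepsilon$ uniformly in $n$, which is precisely tightness of $\{\mu_{n}^{S}\}$.

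The compactness of $\overline{K_{M}}$ is the main obstacle, and it is exactly here that the two bounds must be used together. The abstract compactness theorem (Lemma \ref{flandoli1995}) applied to the temporal bound alone is not enough, since $W^{\alpha,2p}([0,T];H^{s-1})$ embeds compactly only into $C([0,T];H^{s-2})$, one derivative short of the target space $\mathcal{X}_{S}$. I would instead combine the two controls: the uniform bound in $C([0,T];H^{s})$ gives, through the compact Sobolev embedding $H^{s}\hookrightarrow H^{s-1}$ on the torus, that $\{v(t):t\in[0,T]\}$ is relatively compact in $H^{s-1}$ at each fixed time, while the bound in $W^{\alpha,2p}([0,T];H^{s-1})$ provides equicontinuity of the $H^{s-1}$-valued paths via the embedding $W^{\alpha,2p}\hookrightarrow C^{0,\alpha-\frac{1}{2p}}$ (valid once $2\alpha p>1$, which we may arrange by choosing $\alpha$ admissibly). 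The Arzel\`a--Ascoli theorem in $C([0,T];H^{s-1})$ then yields relative compactness of $K_{M}$; equivalently, this is the Aubin--Lions--Simon principle for the chain $H^{s}\hookrightarrow H^{s-1}\hookrightarrow H^{s-1}$, and one may also interpolate the two bounds into some $W^{\alpha',2p}([0,T];H^{s-\theta})$ with $\theta<1$ and $2\alpha' p>1$ and invoke Lemma \ref{flandoli1995} directly with $B_{0}=H^{s-\theta}$, $B=H^{s-1}$.

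Finally, with both marginals tight I would assemble joint tightness. For $\varepsilon>0$, choose compact $K^{S}\subset\mathcal{X}_{S}$ with $\mu_{n}^{S}(K^{S})\ge 1-\varepsilon/2$ for all $n$ and compact $K^{W}\subset\mathcal{X}_{W}$ with $\mu_{W}(K^{W})\ge 1-\varepsilon/2$; then $K^{S}\times K^{W}$ is compact in $\mathcal{X}$, and because $\mu_{n}=\mu_{n}^{S}\times\mu_{W}$ as in \eqref{def-mu} we get $\mu_{n}(K^{S}\times K^{W})\ge(1-\varepsilon/2)^{2}\ge 1-\varepsilon$ uniformly in $n$. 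This establishes tightness of $\{\mu_{n}\}$ in $\mathcal{P}r(\mathcal{X})$.
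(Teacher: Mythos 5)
Your proposal is correct, and its overall skeleton (product structure of $\mu_n$, trivial tightness of the fixed marginal $\mu_W$, Chebyshev/Markov against the uniform moment bounds of Proposition \ref{prop3.2}, and a compact exhaustion of $\mathcal{X}_S$) coincides with the paper's. Where you diverge is in how the compact sets are built. The paper splits $u_n$ into the drift integral and the stochastic integral, bounds these in $W^{1,2p}([0,T];H^{s-1})$ and $W^{\alpha,2p}([0,T];H^{s-1})$ respectively via \eqref{3.2-G}--\eqref{3.2-F}, and invokes Lemma \ref{flandoli1995} to claim the resulting balls $B_n(r)$ are precompact in $C([0,T];H^{s-1})$; read literally this application is loose, since Lemma \ref{flandoli1995} requires a \emph{compact} spatial embedding $B_0\hookrightarrow B$ and the paper takes $B_0=B=H^{s-1}$. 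You identify exactly this defect and repair it by pairing the temporal bound in $W^{\alpha,2p}([0,T];H^{s-1})$ with the uniform bound in $C([0,T];H^{s})$, so that pointwise relative compactness comes from the Rellich embedding $H^s\hookrightarrow\hookrightarrow H^{s-1}$ and equicontinuity from $W^{\alpha,2p}\hookrightarrow C^{0,\alpha-\frac{1}{2p}}$, closing the argument with Arzel\`a--Ascoli (or, as you note, with Lemma \ref{flandoli1995} applied to an interpolated space $H^{s-\theta}$, $\theta<1$). This is a genuinely cleaner route and it uses only estimates already available from Proposition \ref{prop3.2}. One caveat you share with the paper rather than introduce: the requirement $2\alpha p>1$ together with the stated admissible range $\alpha\in(0,\frac{p-2}{2p})$ forces $p>3$, not merely $p>2$; since $p$ may be taken as large as the moment hypothesis allows, this is harmless, but it is worth flagging when you choose $\alpha$ "admissibly."
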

\begin{proof} Fix any $\alpha\in(0,\frac{p-2}{2p})$ with $2\alpha p>1$, it follows from Lemma \ref{flandoli1995} that $W^{1,2p}([0,T];H^{s-1})$ and $W^{\alpha,2p}([0,T];H^{s-1})$ are compactly embedded in $\mathcal{X}_S$, which implies that the ball with radius $r>0$,
    $
    B_{n}(r)=\{u_{n}:\Vert u_{n}\Vert_{W^{1,2p}([0,T];H^{s-1})}\le r\}\cap\Vert u_{n}\Vert_{W^{\alpha,2p}([0,T];H^{s-1})}\le r\},
    $
    is pre-compact in $\mathcal{X}_S$. Setting
    $$
    B_{n}^1(r)=\{ u_{n}:\bigg\Vert\int_0^t \theta_R(\Vert u_{n}\Vert_{W^{1,\infty}})\sigma(t,u_{n})\mathrm{d}W(t) \bigg\Vert_{W^{\alpha,2p}([0,T];H^{s-1})}\le r\},
    $$
    and
    $$
    B_{n}^2(r)=\{ u_{n}:\bigg\Vert u_{n}-\int_0^t \theta_R(\Vert u_{n}\Vert_{W^{1,\infty}})\sigma(t,u_{n})\mathrm{d}W(t) \bigg\Vert_{W^{1,2p}([0,T];H^{s-1})}\le r\}.
    $$
    Notice that $B_{n}^1(r)\cap B_{n}^2(r)\subset B_{n}(r)$, by virtue of Proposition \ref{prop3.2}, the uniform momentum estimates and the Chebyshev inequality, we bound
    \begin{equation}
        \begin{aligned}
            \mu_{n}^S(B_{n}(r)^c)&\le \mathbb{P}\bigg( u_{n}:\bigg\Vert\int_0^t \theta_R(\Vert u_{n}\Vert_{W^{1,\infty}})\sigma(t,u_{n})\mathrm{d}W(t) \bigg\Vert_{W^{\alpha,2p}([0,T];H^{s-1})}> r\bigg)\\
           &\quad \  +\mathbb{P}\bigg( u_{n}:\bigg\Vert u_{n}-\int_0^t \theta_R(\Vert u_{n}\Vert_{W^{1,\infty}})\sigma(t,u_{n})\mathrm{d}W(t) \bigg\Vert_{W^{1,2p}([0,T];H^{s-1})}> r\bigg)\\
           &\le \frac{C}{r},
        \end{aligned}
    \end{equation}
    where $C$ is a universal constant independent of $r$ and $n$. In other words, for any $\gamma>1$, there exists $r>0$ large enough such that the compact set $B_{n}(r) \in \mathcal{X}_S$ satisfies $\mu_{n}^S(B_{n}(r)) > 1-\gamma$, for any $n\in\N$. We infer that $\mu_{n}^S$ is a tight sequence on $\mathcal{X}$. Moreover, $\mu_W$ is trivially weakly compact and so is
tight on $\mu_W$. As a consequence, the sequence $\{\mu_{n}\}_{n\in\N}$ is tight on $\mathcal{X}$ , which completes the proof.
\end{proof}
\subsection{Global existence to the cut-off system \eqref{cutoff}}\label{section3.5}
In this section, we prove the global existence of pathwise solutions for \eqref{eq:galerkin-system} by using the stochastic compactness method.
\begin{lemma}[Skorokhod Embedding Theorem \cite{prato1992}] \label{lem-skorokhod}
    Let $ X $ be a complete, separable metric space. For an arbitrary sequence $\{\mu_n\}\subset \mathcal{P}r({X})$ such that $\{\mu_n\}$ is tight on $(X; \mathcal{B}(X))$,
there exists a subsequence $\{\mu_{N,n}\}$ which converges weakly to a probability measure $\mu$,
and a probability space $(\Omega,\mathcal{F},\mathbb{P})$ with $X$ valued Borel measurable random variables
$x_n$ and $x$, such that $\mu_n$ is the distribution of $x_n$, $\mu$ is the distribution of $x$, and
$x_n \to x$, $\mathbb{P}$-a.s.
\end{lemma}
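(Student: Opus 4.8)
The plan is to prove the statement in two logically separate stages: a \emph{compactness} stage that extracts the weakly convergent subsequence from tightness, and a \emph{representation} stage that realizes the subsequence together with its limit as almost surely convergent random elements on a single probability space. The two stages correspond exactly to the two classical pillars behind this result, namely Prokhorov's theorem and the Skorokhod representation theorem, and I would assemble them in that order.

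For the first stage I would invoke Prokhorov's theorem. Since $X$ is complete and separable and $\{\mu_n\}$ is tight, the family is relatively compact in the topology of weak convergence, so some subsequence converges weakly. Concretely, one uses the compact sets $K_m$ with $\mu_n(K_m)\ge 1-1/m$ (for all $n$) furnished by tightness; testing against a countable convergence-determining family of bounded continuous functions and extracting by a diagonal argument produces a limiting positive functional, which the Riesz representation theorem identifies with a finite Borel measure, while tightness guarantees that no mass escapes so that the limit is genuinely a probability measure. After relabeling I may assume $\mu_{n_k}\rightharpoonup\mu$.

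The second stage is the Skorokhod construction, which I would carry out on the common probability space $([0,1],\mathcal{B}([0,1]),\mathrm{Leb})$ (or a suitable product thereof) via a partition-refinement scheme. For each level $m$, separability of $X$ allows me to partition $X$ into countably many Borel cells $\{B^m_i\}_i$ of diameter below $1/m$ whose boundaries are $\mu$-null, i.e. $\mu(\partial B^m_i)=0$; by the portmanteau theorem weak convergence then upgrades to $\mu_{n_k}(B^m_i)\to\mu(B^m_i)$ for every such cell. I would define the random elements $x_{n_k}$ and $x$ recursively in $m$ so that each carries exactly its prescribed law $\mu_{n_k}$, respectively $\mu$, and so that $x_{n_k}$ and $x$ land in a common level-$m$ cell with probability tending to $1$ as $k\to\infty$. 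Almost sure convergence then follows by Borel--Cantelli: for each fixed $m$ the coupling forces $d(x_{n_k},x)<1/m$ for all large $k$ outside an event of small probability, and summability of these exceptional probabilities yields $d(x_{n_k},x)\to 0$ almost surely.

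The hard part is precisely this representation stage. In the real-valued case the coupling is immediate via the inverse distribution functions, $x_{n_k}=F_{n_k}^{-1}$ and $x=F^{-1}$, since $F_{n_k}\to F$ at continuity points forces $F_{n_k}^{-1}\to F^{-1}$ Lebesgue-a.e. For an abstract separable metric space $X$ no such monotone inverse is available, and the whole difficulty lies in performing the nested coupling so that at every refinement level the marginals remain \emph{exactly} $\mu_{n_k}$ and $\mu$ while the cell-matching probabilities stay controlled; keeping this bookkeeping consistent across infinitely many refinement levels is the technical heart of the argument. Since this is a standard result, in the paper it is of course simply quoted from \cite{prato1992}, and only the weak limit and the a.s.\ convergence produced by the lemma are used downstream.
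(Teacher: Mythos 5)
The paper offers no proof of this lemma: it is quoted verbatim as a classical result from \cite{prato1992}, so there is nothing internal to compare your argument against. Your two-stage outline --- Prokhorov's theorem to extract the weakly convergent subsequence from tightness on a Polish space, followed by the Skorokhod representation via nested partitions into small-diameter Borel cells with $\mu$-null boundaries on $([0,1],\mathrm{Leb})$ --- is precisely the standard proof, and you correctly identify the coupling bookkeeping across refinement levels as the technical core. The only quibble is that the almost sure convergence in the classical construction is usually obtained directly from the cell-matching on a full-measure set rather than by Borel--Cantelli (which would require summability of the exceptional probabilities, hence possibly a further subsequence); this does not affect the validity of the outline.
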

We shall construct global martingale solutions of \eqref{eq:galerkin-system} as follows.
\begin{proposition}\label{martingale}
    Let $s>\frac{d}{2}+3$, $p>2$ and $R\ge 1$ be fixed. Suppose that $\mu_0\in \mathcal{P}r(H^{s})$ with $\int_{H^s}\Vert u\Vert_{H^s}^p\mathrm{d}\mu_0(u)<\infty$, and the assumptions \textbf{(A1)} and \textbf{(A2)} hold. Then there exists a stochastic basis $\widetilde{\mathcal{S}}=(\widetilde{\Omega},\widetilde{\mathcal{F}},\widetilde{\mathbb{P}},\{\widetilde{\mathcal{F}}_{t}\}_{t \ge 0},\widetilde{W})$ and an $H^s$ valued, $\widetilde{\mathcal{F}}_{t}$-predictable process $\widetilde{u}\in L^2(\Omega;C([0,\infty);H^s))$ such that $\widetilde{\mathbb{P}}\circ \widetilde{u}(0)^{-1}=\mathbb{P}\circ u_0^{-1}$, and for every $t\ge 0$ 
    \begin{equation}\label{pp3.6-r}
    \begin{aligned}
        &\widetilde{u}+\int_0^t\theta_R(\Vert \widetilde{u}\Vert_{W^{1,\infty}})\big((1-2\widetilde{u})\nabla S\cdot \nabla \widetilde{u}
+(\widetilde{u}-\widetilde{u}^2)\Delta S\big)\mathrm{d}\tau\\
&\quad \quad\quad\quad\quad\quad\quad\quad \quad=\widetilde{u}(0)+\int_0^t\theta_R(\Vert \widetilde{u}\Vert_{W^{1,\infty}})\sigma(\tau,\widetilde{u})\mathrm{d}\widetilde{W},\quad \widetilde{\mathbb{P}}\textrm{-a.s.}
    \end{aligned}
    \end{equation}
\end{proposition}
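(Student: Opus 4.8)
The plan is to carry out the classical stochastic compactness argument: starting from the tight sequence of laws $\{\mu_n\}$ furnished by Proposition \ref{prop3.3}, I would pass to the limit $n\to\infty$ in the Galerkin system \eqref{eq:galerkin-system} on a new probability space to obtain a \emph{martingale} solution of the cut-off dynamics. Concretely, I would first apply the Skorokhod embedding theorem (Lemma \ref{lem-skorokhod}) to $\{\mu_n\}$ on $\mathcal{X}=\mathcal{X}_S\times\mathcal{X}_W$, producing a subsequence (still labeled $n$), a stochastic basis $\widetilde{\mathcal{S}}=(\widetilde{\Omega},\widetilde{\mathcal{F}},\widetilde{\mathbb{P}})$, and $\mathcal{X}$-valued random variables $(\widetilde{u}_n,\widetilde{W}_n)$ and $(\widetilde{u},\widetilde{W})$ such that $(\widetilde{u}_n,\widetilde{W}_n)$ shares the law of $(u_n,W)$ for every $n$ and $(\widetilde{u}_n,\widetilde{W}_n)\to(\widetilde{u},\widetilde{W})$ $\widetilde{\mathbb{P}}$-a.s. in $C([0,T];H^{s-1})\times C([0,T];\mathfrak{U})$. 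I would then equip $\widetilde{\mathcal{S}}$ with the (completed, right-continuous) filtration $\{\widetilde{\mathcal{F}}_t\}$ generated by $\widetilde{u}$ and $\widetilde{W}$, check that $\widetilde{W}$ is an $\widetilde{\mathcal{F}}_t$-cylindrical Wiener process and that $\widetilde{u}$ is predictable, and note that equality of laws transfers the finite-dimensional, globally Lipschitz Galerkin dynamics, so that each $\widetilde{u}_n$ solves \eqref{eq:galerkin-system} driven by $\widetilde{W}_n$ on $\widetilde{\mathcal{S}}$.

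Next I would upgrade the convergence and regularity. Since $s-1>\frac{d}{2}+1$, the embedding $H^{s-1}\hookrightarrow W^{1,\infty}$ is continuous, so the a.s. convergence in $C([0,T];H^{s-1})$ yields $\theta_R(\|\widetilde{u}_n\|_{W^{1,\infty}})\to\theta_R(\|\widetilde{u}\|_{W^{1,\infty}})$ uniformly in $t$, $\widetilde{\mathbb{P}}$-a.s. The uniform moment estimate of Proposition \ref{prop3.2} (valid for $p>2$) transfers to $\{\widetilde{u}_n\}$ by equality of laws; weak lower semicontinuity of the $H^s$-norm together with Fatou's lemma then places $\widetilde{u}$ in $L^{2p}(\widetilde{\Omega};C([0,T];H^s))$, while the strict inequality $p>2$ supplies the uniform integrability needed to promote the a.s. convergence to convergence in $L^2(\widetilde{\Omega};C([0,T];H^{s-1}))$.

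With these ingredients I would pass to the limit term by term in the integral form of \eqref{eq:galerkin-system}. For the drift, using $P_n\to\mathrm{Id}$ strongly, the algebra property (Lemma \ref{lem-algebra}) and the Kato--Ponce estimates (Lemma \ref{lem-kato}), the projected nonlinearity $G_n(\widetilde{u}_n)$ converges in $C([0,T];H^{s-1})$ to $\theta_R(\|\widetilde{u}\|_{W^{1,\infty}})\big((1-2\widetilde{u})\nabla S(\widetilde{u})\cdot\nabla\widetilde{u}+(\widetilde{u}-\widetilde{u}^2)\Delta S(\widetilde{u})\big)$, and the deterministic bound supplied by the cut-off lets dominated convergence handle the time integral. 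The \textbf{main obstacle} is the passage to the limit in the stochastic integral $\int_0^t F_n(\widetilde{u}_n)\,\mathrm{d}\widetilde{W}_n$, in which the integrand and the integrator both vary with $n$. Here I would first show, using \textbf{(A2)}, the cut-off, and $P_n\to\mathrm{Id}$, that $F_n(\widetilde{u}_n)\to\theta_R(\|\widetilde{u}\|_{W^{1,\infty}})\sigma(\cdot,\widetilde{u})$ in $L^2([0,T];\mathcal{L}_2(\mathfrak{U};H^{s-1}))$, $\widetilde{\mathbb{P}}$-a.s., and then invoke a standard convergence lemma for stochastic integrals under joint a.s. convergence of integrands and Wiener processes (equivalently, identify the limit through the associated martingale problem by matching quadratic and cross variations with $\widetilde{W}$) to conclude that $\int_0^t F_n(\widetilde{u}_n)\,\mathrm{d}\widetilde{W}_n\to\int_0^t\theta_R(\|\widetilde{u}\|_{W^{1,\infty}})\sigma(\tau,\widetilde{u})\,\mathrm{d}\widetilde{W}$ in probability.

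Assembling these limits yields the identity \eqref{pp3.6-r} on $[0,T]$, with $\widetilde{u}\in L^{2p}(\widetilde{\Omega};C([0,T];H^s))$ and $\widetilde{\mathbb{P}}\circ\widetilde{u}(0)^{-1}=\mathbb{P}\circ u_0^{-1}$ inherited from the matching initial laws. Finally, since the cut-off renders the truncated dynamics globally defined and $T$ was arbitrary, a diagonal extraction and patching over an exhausting sequence of time horizons extends the solution to $[0,\infty)$, producing the desired $\widetilde{\mathcal{F}}_t$-predictable process $\widetilde{u}\in L^2(\widetilde{\Omega};C([0,\infty);H^s))$ and completing the proof of the proposition.
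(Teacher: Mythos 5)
Your proposal follows essentially the same route as the paper: tightness from Proposition \ref{prop3.3}, Skorokhod embedding to get a.s. convergence of $(\widetilde{u}_n,\widetilde{W}_n)$ on a new basis, transfer of the Galerkin dynamics by equality of laws, uniform moments plus the cut-off to control the nonlinearities, and identification of the limiting stochastic integral (the paper defers these last details to the references, notably the Euler--Poincar\'e treatment). The only quibble is that you claim the drift converges in $C([0,T];H^{s-1})$, whereas with $\widetilde{u}_n\to\widetilde{u}$ only in $C([0,T];H^{s-1})$ the quadratic terms involving $\nabla\widetilde{u}_n$ naturally converge one derivative lower (the paper states $H^{s-2}$), which is still ample for passing to the limit in the weak formulation.
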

\begin{proof}
   With  proposition \ref{prop3.3} in hand, we apply Lemma \ref{lem-skorokhod} to a weakly convergence subsequence of $\{\mu_{n}\}_{n\in\N}$. We obtain a new probability space $(\widetilde{\Omega},\widetilde{\mathcal{F}},\widetilde{\mathbb{P}})$ on which we have a sequence of random elements $\{(\widetilde{u}_{n},\widetilde{W}_{n})\}$ converging almost surely in $\mathcal{X}$ to an element $(\widetilde{u},\widetilde{W})$, that is
   \begin{equation}\label{pp3.6-4}
       \widetilde{u}_{n}\to\widetilde{u}\quad \ \text{in}\ C([0,T];H^{s-1}),\ \widetilde{\mathbb{P}}\text{-a.s.},
   \end{equation}
   and
   \begin{equation}\label{pp3.6-1}
       \widetilde{W}_{n}\to\widetilde{W}\quad \ \text{in}\ C([0,T];\mathfrak{U}),\ \widetilde{\mathbb{P}}\text{-a.s.}
   \end{equation}
   We can verify as in \cite{bensoussan1995,gawarecki2011stochastic} that $(\widetilde{u}_{n},\widetilde{W}_{n})$ satisfies the $n$th order Galerkin approximation \eqref{eq:galerkin-system} relative to the stochastic basis $(\widetilde{\Omega}_{n},\widetilde{\mathcal{F}}_{n},\widetilde{\mathbb{P}}_{n},\{\widetilde{\mathcal{F}}_{n,t}\}_{t \ge 0},\widetilde{W}_{n})$ with $\widetilde{\mathcal{F}}_{n,t}$ the completion of $\sigma$-algebra generated by $\{(\widetilde{u}_{n}(s),\widetilde{W}_{n}(s)):s\le t\}$
   and the following convergence holds
   \begin{equation}\label{pp3.6-2}
       G_{n}(\widetilde{u}_{n}(t))\to G(\widetilde{u}(t))\quad \ \text{in}\ C([0,T];H^{s-2}),\ \mathbb{P}\text{-a.s.},
   \end{equation}
   \begin{equation}\label{pp3.6-3}
       F_{n}(\widetilde{u}_{n}(t))\to F(\widetilde{u}(t))\quad \ \text{in}\ C([0,T];H^{s-1}),\ \mathbb{P}\text{-a.s.}
   \end{equation}
   Indeed, we observe from \eqref{pp3.6-4} that the uniform estimates in Proposition \ref{prop3.2} remain to be valid for $\{\widetilde{u}_{n}\}_{n\in\N}$ with $\mathbb{P}$ replaced by $\widetilde{P}_{n}$. It follows from Lemma \ref{flandoli1995} that $\{\widetilde{u}_{n}\}_{n\in\N}$ is compact in $C([0,T];H^{s-1})$,
and hence we conclude from property of $G_n$ and $F_n$ that,
    for fixed $l\ge 1$
   \begin{equation}
       G_{l}(\widetilde{u}_{l})\to G(\widetilde{u}_{l}),\quad \ F_{l}(\widetilde{u}_{l})\to F(\widetilde{u}_{l})\quad \ \text{in}\ C([0,T];H^{s-1}),\ \widetilde{\mathbb{P}}\text{-a.s.}
   \end{equation}
   The convergence \eqref{pp3.6-2} and \eqref{pp3.6-3} can be obtained by using standard diagonal argument.
   Using the uniform bound in Proposition \ref{prop3.2} for $\{\widetilde{u}_{n}\}_{n\in\N}$ and the almost surely convergences
\eqref{pp3.6-4}-\eqref{pp3.6-3}, we can show that $(\widetilde{u}, \widetilde{W})$ solves \eqref{pp3.6-r}. For the technical details of this passage to the limit we refer to, for example, \cite{SMEP2} where the analysis is carried out for the Euler-Poincar\'{e} equation.
Applying those arguments to this case introduces no additional difficulties, so we omit the
details here.
\end{proof}
We also note that Proposition \ref{martingale} directly leads to new conclusions regarding the existence of martingale solutions for the stochastic Keller-Segel equation \eqref{SHKS-trans}.
\begin{remark}[Existence of martingale solutions]
    By introducing the stopping time
    \begin{equation*}
        \tau:=\inf\{t\ge 0:\|\widetilde{u}\|_{W^{1,\infty}}\ge R\},
    \end{equation*}
    we can show that the pair $(\widetilde{u},\widetilde{S})$ obtained from Proposition \ref{martingale} constitutes a local martingale solution to \eqref{SHKS-trans}. Clearly, unless the initial condition satisfies $\|\widetilde{u}(0)\|_{W^{1,\infty}}<R$, i.e., unless $\mu_0(\{u_0\in H^s(\T^d);\|{u}(0)\|_{W^{1,\infty}}<R\})=1$, we have $\widetilde{\P}(\tau=0)>0$.
\end{remark}
Having now established Proposition \ref{martingale}, we would now expect pathwise solutions to exist once we establish that solutions are pathwise unique.
\begin{proposition}\label{pathwise-uniqueness}
    Let $s>\frac{d}{2}+3$, $p\ge 2$ and $R>1$. Assume that the conditions \textbf{(A1)} and \textbf{(A2)} hold, and $u_0\in L^2(\Omega;H^s(\mathbb{T}^d))$ is a $\mathcal{F}_0$-measurable initial data. Let $u$ and $v$ be two global martingale solutions of \eqref{cutoff} associated to the same stochastic basis $\mathcal{S}=(\Omega,\mathcal{F},\mathbb{P},\{\mathcal{F}_t\}_{t\ge 0},W)$. If $\mathbb{P}\{ u(0)=v(0)=u_0\}=1$, with then $u$ and $v$ are indistinguishable, that is
    \begin{equation}\label{pp3.7-fact}
        \mathbb{P}\{u(t)=v(t);\forall t \ge 0\}=1.
    \end{equation}
\end{proposition}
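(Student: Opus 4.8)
The plan is to control the difference $w := u - v$ in the norm $H^{s-1}$ rather than in $L^2$. The reason for this choice is structural: the truncation $\theta_R$ depends on the $W^{1,\infty}$-norm, so its increment obeys only $|\theta_R(\|u\|_{W^{1,\infty}}) - \theta_R(\|v\|_{W^{1,\infty}})| \le \|\theta_R'\|_{L^\infty}\|w\|_{W^{1,\infty}}$, and $\|w\|_{W^{1,\infty}}$ cannot be dominated by $\|w\|_{L^2}$; since $s - 1 > \frac{d}{2} + 2$ we do have the embedding $H^{s-1} \hookrightarrow W^{1,\infty}$, and moreover \textbf{(A1)}-\textbf{(A2)} remain available at the index $s - 1 > \frac{d}{2} + 1$. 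Because the constants below involve the pathwise-finite but unbounded quantities $\|u\|_{H^s}, \|v\|_{H^s}$, I would localize with the stopping times $\tau_M := \inf\{t \ge 0 : \|u(t)\|_{H^s} + \|v(t)\|_{H^s} \ge M\}$; as $u, v \in C([0,\infty); H^s)$ $\mathbb{P}$-almost surely, one has $\tau_M \uparrow \infty$ a.s., so it suffices to prove $w \equiv 0$ on $[0, \tau_M]$ for each fixed $M$.

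Writing $Q := S(u) - S(v) = (1-\Delta)^{-1}w$ and subtracting the two copies of \eqref{cutoff}, I would apply the It\^o formula to $\|w\|_{H^{s-1}}^2$, which is legitimate since $w$ is $H^s$-valued, the drift is $H^{s-1}$-valued, and the diffusion lies in $\mathcal{L}_2(\mathfrak{U}; H^{s-1})$. This produces the drift pairing $-2(G(u) - G(v), w)_{H^{s-1}}$, the It\^o correction $\|F(u) - F(v)\|_{\mathcal{L}_2(\mathfrak{U}; H^{s-1})}^2$, and a local martingale. I would split the drift across the truncation, writing $G(u) - G(v) = \theta_R(\|u\|_{W^{1,\infty}})\big(N(u) - N(v)\big) + \big(\theta_R(\|u\|_{W^{1,\infty}}) - \theta_R(\|v\|_{W^{1,\infty}})\big)N(v)$, where $N(u) := (1-2u)\nabla S(u)\cdot\nabla u + (u - u^2)\Delta S(u)$.

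The heart of the estimate is the first piece, where the convection term $(1-2u)\nabla S(u)\cdot\nabla w$ loses one derivative at the $H^{s-1}$ level. I would recover it in the standard quasilinear way: setting $a := (1-2u)\nabla S(u)$, decompose $(\Lambda^{s-1}(a\cdot\nabla w), \Lambda^{s-1}w)_2 = ([\Lambda^{s-1}, a]\nabla w, \Lambda^{s-1}w)_2 + (a\cdot\nabla\Lambda^{s-1}w, \Lambda^{s-1}w)_2$, bound the commutator by Kato-Ponce (Lemma \ref{lem-kato}), and integrate by parts in the second term so that only $\|\mathrm{div}\,a\|_{L^\infty}$ survives; using the algebra property (Lemma \ref{lem-algebra}) and the smoothing of $(1-\Delta)^{-1}$, every remaining factor is controlled by $\|u\|_{H^s}$, $\|v\|_{H^s}$ and the $W^{1,\infty}$-norms, hence by a constant $C_{R,M}$ on $[0, \tau_M]$, so this piece is $\le C_{R,M}\|w\|_{H^{s-1}}^2$. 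For the truncation-increment piece I would use $|\theta_R(\|u\|_{W^{1,\infty}}) - \theta_R(\|v\|_{W^{1,\infty}})| \le C_R\|w\|_{H^{s-1}}$ together with $\|N(v)\|_{H^{s-1}} \le C_M$ on $[0, \tau_M]$, again obtaining $\le C_{R,M}\|w\|_{H^{s-1}}^2$. The It\^o correction is estimated by the analogue of \textbf{(A2)} at index $s-1$ and the Lipschitz bound for $\theta_R$, which yields $\le C_{R,M}\|w\|_{H^{s-1}}^2$.

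Taking expectations after stopping, so that the martingale term vanishes (or is absorbed via the Burkholder-Davis-Gundy inequality), leaves an inequality of the form $\mathbb{E}\|w(t \wedge \tau_M)\|_{H^{s-1}}^2 \le C_{R,M}\int_0^t \mathbb{E}\|w(\tau \wedge \tau_M)\|_{H^{s-1}}^2\,\mathrm{d}\tau$, so Gronwall forces $w \equiv 0$ on $[0, \tau_M]$ for every $M$; letting $M \to \infty$ yields \eqref{pp3.7-fact}. The main obstacle is precisely the tension that dictates the choice of norm: the $W^{1,\infty}$-dependence of the cut-off rules out working in $L^2$ and forces the estimate up to $H^{s-1}$, where the quasilinear convection $\nabla S(u)\cdot\nabla u$ then loses a derivative; only pairing the $H^{s-1}$ energy identity with Kato-Ponce commutator control, together with the localization that converts the pathwise $H^s$-bounds into deterministic constants, closes the argument.
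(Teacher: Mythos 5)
Your proposal is correct and follows essentially the same route as the paper: an energy estimate for the difference in a Sobolev norm of lower order than $s$, splitting the drift across the cut-off via the Lipschitz property of $\theta_R$, Kato--Ponce commutator plus integration by parts for the convection term, assumptions \textbf{(A1)}--\textbf{(A2)} for the It\^o correction, and localization by stopping times followed by Burkholder--Davis--Gundy and Gronwall. The only (immaterial) differences are that the paper runs the estimate at the $H^{s-2}$ level rather than $H^{s-1}$, and localizes with stopping times built from the $H^{s-1}$ norms of $u$ and $v$ (for which pathwise continuity is guaranteed by the construction) instead of the $H^{s}$ norms.
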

\begin{proof}
    From Proposition \ref{prop3.2}, we have for every $T>0$
    \begin{equation}\label{pp3.7-1}
        \mathbb{E}\sup_{t\in[0,T]}(\Vert u(t)\Vert_{H^s}^2+\Vert v(t)\Vert_{H^s}^2)\le M<\infty,
    \end{equation}
where $M$ is a constant depending only on $\mathbb{E}\|u_0\|_{H^s}^2$, $R$, $\beta$ and $T$. However, continuity in time is only guaranteed for the $H^{s-1}$ norms of $u$ and $v$, which lead us to define the collection of stopping times
    \begin{equation}
        \xi_K=\inf\{t\ge 0:\Vert u(t)\Vert_{H^{s-1}}^2+\Vert v(t)\Vert_{H^{s-1}}^2\ge K\},\quad \ \forall K>0.
    \end{equation}
    Due to \eqref{pp3.7-1}, we have $\xi_K\to \infty$ almost surely as $K\to \infty$.

    Taking $\varphi=u-v$, we have
    \begin{equation}\label{pp3.7-2}
        \begin{aligned}
            &\mathrm{d}\varphi+H_1(u,v)\mathrm{d}t+H_2(u,v)\mathrm{d}t =[\theta_R(\Vert u\Vert_{W^{1,\infty}})\sigma(t,u)-\theta_R(\Vert v\Vert_{W^{1,\infty}})\sigma(t,v)]\mathrm{d}W(t).
        \end{aligned}
    \end{equation}
    where
    \begin{equation*}
    \begin{aligned}
        H_1(u,v)&=\theta_R(\Vert u\Vert_{W^{1,\infty}})((1-2u)\nabla S(u)\cdot \nabla u) -\theta_R(\Vert v\Vert_{W^{1,\infty}})((1-2v)\nabla S(v)\cdot \nabla v),\\
            H_2(u,v)&=\theta_R(\Vert u\Vert_{W^{1,\infty}})(u-u^2)\Delta S(u)-\theta_R(\Vert v\Vert_{W^{1,\infty}})(v-v^2)\Delta S(v).
        \end{aligned}
    \end{equation*}
    Applying the operator $\Lambda^{s-2}$ to \eqref{pp3.7-2} and using the It\^o formula to $\Vert \Lambda^{s-2}\varphi\Vert_{L^2}^2$, we find
    \begin{equation}
    \begin{aligned}
        \mathrm{d}\Vert \Lambda^{s-2}\varphi\Vert_{L^2}^2 
        & =2\sum_{k\ge 1}(\Lambda^{s-2}(\theta_R(\Vert u\Vert_{W^{1,\infty}})\sigma(t,u)e_k-\theta_R(\Vert v\Vert_{W^{1,\infty}})\sigma(t,v)e_k),\Lambda^{s-2}\varphi)\mathrm{d}W_k\\
        &\quad \ -2(\Lambda^{s-2} H_1(u,v),\Lambda^{s-2}\varphi)_2\mathrm{d}t-2(\Lambda^{s-2} H_2(u,v),\Lambda^{s-2}\varphi)_2\mathrm{d}t\\
        &\quad \ + \Vert \Lambda^{s-2} \theta_R(\Vert u\Vert_{W^{1,\infty}})\sigma(t,u)-\theta_R(\Vert v\Vert_{W^{1,\infty}})\sigma(t,v)\Vert_{L^2}^2\mathrm{d}t\\
        &\ =\sum_{k\ge 1}I_{1_k} \mathrm{d}W_k+I_2 \mathrm{d}t +I_3 \mathrm{d}t+I_4 \mathrm{d}t.
    \end{aligned}
    \end{equation}
    We first integrate the above equation on $[0,\tau\land \xi_K]$, then we take a supremum over $\tau\in[0,t]$ and use the Burkholder-Davis-Gundy inequality to find
    \begin{equation}
        \mathbb{E}\sup_{\tau\in[0,t\land\xi_K]}\Vert \varphi\Vert_{H^{s-2}}^2\le C\mathbb{E}\sum_{k\ge 1}\sup_{\tau\in[0,t]}\bigg( \int_0^{\tau\land \xi_K}\vert I_{1_k}\vert^2\mathrm{d}t\bigg)^{\frac{1}{2}}+\sum_{i=2}^{4}\mathbb{E}\sup_{\tau \in [0,t]}\int_0^{\tau\land \xi_K}\vert I_i\vert  \mathrm{d}r.
    \end{equation}
    For the term involving $I_1$, by virtue of the locally Lipschitz condition \textbf{(A2)}, we have
    \begin{equation}\label{pp3.7-J1}
        \begin{aligned}
            &\mathbb{E}\sum_{k\ge1}\sup_{\tau\in[0,t]}\bigg( \int_0^{\tau\land \xi_K}\vert I_{1_k}\vert^2\mathrm{d}t\bigg)^{\frac{1}{2}}\\
            &\quad\ \le C \sum_{k\ge 1}\mathbb{E}\bigg(\int_0^{t\land \xi_K}(\Lambda^{s-2}(\theta_R(\Vert u\Vert_{W^{1,\infty}})\sigma(t,u)e_k-\theta_R(\Vert v\Vert_{W^{1,\infty}})\sigma(t,v)e_k),\Lambda^{s-2}\varphi)^2\mathrm{d}\tau\bigg)^{\frac{1}{2}}\\
            &\quad \ \le C\mathbb{E}\bigg(\int_0^{t\land \xi_K}\Vert\Lambda^{s-2}\varphi\Vert_{L^2}^2\Vert \theta_R(\Vert u\Vert_{W^{1,\infty}})\sigma(t,u)-\theta_R(\Vert v\Vert_{W^{1,\infty}})\sigma(t,v)\Vert_{\mathcal{L}_2(\mathfrak{U};H^{s-2})}^2\mathrm{d}\tau\bigg)^{\frac{1}{2}}\\
            &\quad \ \le \frac{1}{2}\mathbb{E}\sup_{\tau\in[0,t\land \xi_K]}\Vert \varphi\Vert_{H^{s-2}}^2+C\mathbb{E}\int_0^{t\land \xi_K}\Vert \varphi\Vert_{H^{s-2}}^2\mathrm{d}\tau.
        \end{aligned}
    \end{equation}
    For $I_2$, we can rewrite it as
       \begin{align*}
            I_2&=-2(\theta_R(\Vert u\Vert_{W^{1,\infty}})-\theta_R(\Vert v\Vert_{W^{1,\infty}}))( \varphi,(1-2u)\nabla S(u)\cdot \nabla u)_{H^{s-2}}\\
            &\quad \ -2\theta_R(\Vert v\Vert_{W^{1,\infty}})(\Lambda^{s-2}\varphi,\Lambda^{s-2}((1-2u)\nabla S(u))\cdot \nabla \varphi)_2\\
            &\quad \ -2\theta_R(\Vert v\Vert_{W^{1,\infty}})(\Lambda^{s-2}\varphi,\Lambda^{s-2}((1-2u)\nabla S(u)-(1-2v)\nabla S(v))\cdot \nabla u)_2\\
            &=I_{21}+I_{22}+I_{23}.
       \end{align*}
    By using the Mean Value Theorem for $\theta_R(\cdot)$, the continuous embedding $H^{s-2}\hookrightarrow W^{1,\infty}$, and the fact of $\Vert \theta_R' \Vert_{L^{\infty}}\le C_R$, we have
    \begin{equation*}
        \begin{aligned}
            I_{21}+I_{23}\le C_s \Vert \theta_R'\Vert_{L^{\infty}}\Vert \varphi\Vert_{H^{s-2}}^2\Vert  u\Vert_{L^{\infty}}^2 \Vert u\Vert_{H^{s-2}}\le C_{s,R} \Vert \varphi\Vert_{H^{s-2}}^2.
        \end{aligned}
    \end{equation*}
    Commutating the operator $\Lambda^{s-2}$ with $(1-2u)\nabla S(u)$, we get by integrating by parts that
    \begin{equation*}
       \begin{aligned}
            &\vert(\Lambda^{s-2} \varphi,\Lambda^{s-2} ((1-2u)\nabla S(u)\cdot\nabla \varphi))_2\vert\\
            &\quad \ \le  \vert (\Lambda^{s-2}\varphi,[\Lambda^{s-2},((1-2u)\nabla S(u))]\nabla  \varphi)_2\vert  +\frac{1}{2}\vert ((\Lambda^{s-2} \varphi)^2,\nabla ((1-2u)\nabla S(u)))_2\vert \\
            &\quad \ \le C_s(\Vert \varphi \Vert_{H^{s-2}}\Vert  u\Vert_{H^{s-2}}^2\Vert \nabla \varphi\Vert_{L^{\infty}}+\Vert \varphi\Vert_{H^{s-2}}\Vert \nabla \varphi\Vert_{H^{s-3}}\Vert \nabla u\Vert_{L^{\infty}}^2 +\Vert \varphi\Vert_{H^{s-2}}^2\Vert u\Vert_{W^{1,\infty}}^2)\\
            &\quad \ \le C_s \Vert u\Vert_{H^{s-2}}^2 \Vert \varphi\Vert_{H^{s-2}}^2.
       \end{aligned}
    \end{equation*}
    Combining the last three estimates, we have
    \begin{equation}\label{pp3.7_J2}
        \mathbb{E}\sup_{\tau \in [0,t]}\int_0^{\tau\land \xi_K}\vert I_2\vert  \mathrm{d}r\le C_{s,R}\int_0^t\mathbb{E}\sup_{\tau\in[0,r\land \xi_K]}\Vert \varphi(\tau)\Vert_{H^{s-2}}^2\mathrm{d}r.
    \end{equation}
    By applying Lemma \ref{lem-kato}, we deduce
    \begin{equation*}
        \begin{aligned}
            \Vert H_2(u,v)\Vert_{H^{s-2}}&\le \Vert (\theta_R(\Vert u\Vert_{W^{1,\infty}})-\theta_R(\Vert v\Vert_{W^{1,\infty}}))(u-u^2)\Delta S(u)\Vert_{H^{s-2}}\\
            &\quad \ +\Vert \theta_R(\Vert v\Vert_{W^{1,\infty}})((u-u^2)\Delta S(u)-(v-v^2)\Delta S(v))\Vert_{H^{s-2}}\\
            &\le C_s (\Vert \theta_R'\Vert_{L^{\infty}}\Vert \varphi\Vert_{W^{1,\infty}}\Vert (u-u^2)\Delta S(u)\Vert_{H^{s-2}}\\
            &\quad \ +\Vert (u-v-u^2+v^2)\Delta S(u)\Vert_{H^{s-2}} +\Vert (v-v^2)(\Delta S(u)-\Delta S(v))\Vert_{H^{s-2}}\\
            &\le C_{s,R}\Vert \varphi\Vert_{H^{s-2}}(\Vert u\Vert_{H^{s-2}}^3+\Vert u\Vert_{H^{s-2}}^2+\Vert v\Vert_{H^{s-2}}^2),
        \end{aligned}
    \end{equation*}
    which implies that
        \begin{align}\label{pp3.7_J3}
           \mathbb{E}\sup_{\tau \in [0,t]}\int_0^{\tau\land \xi_K}\vert I_3\vert  \mathrm{d}r&\le C_{s,R}\mathbb{E}\sup_{\tau\in[0,t]}\int_0^{\tau\land \xi_K}\Vert \varphi\Vert_{H^{s-2}}^2(\Vert u\Vert_{H^{s-2}}^3+\Vert u\Vert_{H^{s-2}}^2+\Vert v\Vert_{H^{s-2}}^2)\mathrm{d}r\\
            &\le C_{s,R,K}\int_0^t\mathbb{E}\sup_{\tau\in[0,r\land\xi_K]}\Vert \varphi(\tau)\Vert_{H^{s-2}}^2\mathrm{d}r.\nonumber
        \end{align}
    For the term involving $I_4$, by virtue of the assumptions \textbf{(A1)} and \textbf{(A2)}, we have
    \begin{equation}\label{pp3.7_J4}
        \begin{aligned}
            \mathbb{E}\sup_{\tau\in[0,t]}\int_0^{\tau\land \xi_K}\vert I_4\vert \mathrm{d}r&\le C_s\mathbb{E}\int_0^{t\land\xi_K}\Vert \theta_R'\Vert_{L^{\infty}}^2\Vert \varphi\Vert_{W^{1,\infty}}^2\beta (\Vert u\Vert_{L^{\infty}})(1+\Vert u\Vert_{H^{s-2}}^2)\mathrm{d}r\\
            &\quad \ +C_s\mathbb{E}\int_0^{t\land \xi_K}\gamma(\Vert u\Vert_{W^{1,\infty}}+\Vert v\Vert_{W^{1,\infty}})\Vert \varphi\Vert_{H^{s-2}}^2\mathrm{d}r\\
            &\le C_{s,R,K}\int_0^t\mathbb{E}\sup_{\tau\in[0,r\land\xi_K]}\Vert \varphi(\tau)\Vert_{H^{s-2}}^2\mathrm{d}r.
        \end{aligned}
    \end{equation}
    Combining the estimates \eqref{pp3.7-J1}-\eqref{pp3.7_J4}, we see that
    \begin{equation*}
        \mathbb{E}\sup_{\tau\in[0,t\land\xi_K]}\Vert \varphi(\tau)\Vert_{H^{s-2}}^2\le C_{s,R,K}\int_0^t\mathbb{ E}\sup_{\tau\in[0,r\land\xi_K]}\Vert \varphi\Vert_{H^{s-2}}^2\mathrm{d}r.
    \end{equation*}
    By applying the Gronwall inequality we obtain
    \begin{equation*}
        \mathbb{E}\sup_{\tau\in[0,T\land\xi_K]}\Vert \varphi(\tau)\Vert_{H^{s-2}}^2 =0,
    \end{equation*}
    which implies that $\mathbb{E}\sup_{\tau\in[0,T]}\Vert \varphi(\tau)\Vert_{H^{s-2}}^2=0$ by the Monotone Convergence Theorem and the fact that $\xi_K\to\infty$ as $K\to \infty$. Since $T$ is arbitrary, \eqref{pp3.7-fact} follows, and the proof of uniqueness is therefore complete.
\end{proof}
\begin{lemma}[Gy\"ongy-Krylov lemma \cite{gyongy}]\label{lem-gyongy}
     Let $X$ be a polish space equipped with the Borel $\sigma$-algebra. Let $\{Y_j\}_{j\ge 0}$ be a sequence of $X$ valued random variables and $\{\mu_{j,l}\}_{j,l\ge 0}$ be the joint laws of $\{ Y_j\}_{j\ge 0}$. Then $\{ Y_j\}_{j\ge 0}$ converges in probability if and only if for every subsequence of $\{\mu_{j_k,l_k}\}_{k\ge 0}$, there exists a further subsequence which weakly converges to some $\mu\in\mathcal{P}r(X\times X)$ satisfying
    \begin{equation*}
        \mu(\{(u,v)\in X\times X,u=v\})=1.
    \end{equation*}
\end{lemma}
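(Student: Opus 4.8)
The plan is to prove the two implications separately, with the metric structure of the Polish space $X$ carrying essentially all the weight; so I would first fix a complete metric $\rho$ compatible with the topology of $X$. For the forward (only if) direction I expect no difficulty: if $Y_j\to Y$ in probability for some $X$-valued random variable $Y$, then along any subsequence both coordinates of $(Y_{j_k},Y_{l_k})$ converge in probability to $Y$, so the pair converges in probability, and hence in law, to $(Y,Y)$. The law of $(Y,Y)$ is supported on the diagonal $\{u=v\}$, so the required limit measure $\mu$ exists (here one does not even need to pass to a further subsequence).

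The reverse (if) direction is the real content, and I would argue by contradiction. Since $X$ is complete, a sequence converges in probability if and only if it is Cauchy in probability. Hence if $\{Y_j\}$ fails to converge in probability, it is not Cauchy in probability, which produces $\varepsilon,\delta>0$ and indices $j_k,l_k\to\infty$ with
\begin{equation*}
\mathbb{P}\{\rho(Y_{j_k},Y_{l_k})\ge \varepsilon\}\ge \delta,\qquad \forall k\ge 0.
\end{equation*}
Applying the hypothesis to this particular subsequence of joint laws yields a further subsequence (which I would not relabel) along which $\mu_{j_k,l_k}$ converges weakly to some $\mu\in\mathcal{P}r(X\times X)$ with $\mu(\{u=v\})=1$. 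The set $A=\{(u,v)\in X\times X:\rho(u,v)\ge\varepsilon\}$ is closed and disjoint from the diagonal, so $\mu(A)=0$; on the other hand, $\mu_{j_k,l_k}(A)=\mathbb{P}\{\rho(Y_{j_k},Y_{l_k})\ge\varepsilon\}\ge\delta$ for every $k$. Invoking the portmanteau theorem for closed sets, which gives $\limsup_k \mu_{j_k,l_k}(A)\le \mu(A)=0$, I would then conclude $\delta\le 0$, the desired contradiction; therefore $\{Y_j\}$ converges in probability.

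The hard part will be the reverse direction, and within it two points demand care. First, the reduction to the Cauchy criterion relies essentially on the completeness of $X$, and the negation of "Cauchy in probability" must be unpacked correctly to extract separated indices with $\min(j_k,l_k)\to\infty$. Second, one must apply the portmanteau theorem to the \emph{closed} set $\{\rho\ge\varepsilon\}$ rather than the open set $\{\rho>\varepsilon\}$, precisely so that the upper-semicontinuity bound $\limsup_k\mu_{j_k,l_k}(A)\le\mu(A)$ is the available inequality; using the open set would give the wrong direction of the estimate and the argument would not close.
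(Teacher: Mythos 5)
The paper does not prove this lemma at all; it is quoted verbatim from the cited reference \cite{gyongy} and used as a black box, so there is no in-paper proof to compare against. Your argument is correct and is essentially the standard proof from the original Gy\"ongy--Krylov paper: the forward direction via convergence in law of the pair $(Y_{j_k},Y_{l_k})$ to $(Y,Y)$, and the reverse direction by contradiction, reducing to the Cauchy-in-probability criterion (which is where completeness of the Polish metric enters, together with the usual Borel--Cantelli extraction of an a.s.\ convergent subsequence to produce the limit) and then applying the portmanteau upper bound $\limsup_k\mu_{j_k,l_k}(A)\le\mu(A)$ to the closed set $A=\{\rho\ge\varepsilon\}$, which is disjoint from the diagonal. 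Both of the delicate points you flag --- extracting indices with $\min(j_k,l_k)\to\infty$ from the negation of the Cauchy property, and using the closed rather than the open $\varepsilon$-separation set so that the portmanteau inequality runs in the needed direction --- are exactly the right ones, and your treatment of them closes the argument.
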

With Propositions \ref{martingale} and \ref{pathwise-uniqueness} in hand, we can now apply Lemma \ref{lem-gyongy}
to prove the existence of global pathwise solution to \eqref{cutoff}.
\begin{proposition}\label{prop-3.9}
    Suppose that the condition \textbf{(A1)} and \textbf{(A2)} hold. Let $s>\frac{d}{2}+3$, $p>2$ and $u_0\in L^p(\Omega;H^s(\mathbb{T}^d))$ be a $\mathcal{F}_0$-measurable $H^s(\mathbb{T}^d)$-valued random variable. Then for $R>1$, the cut-off problem \eqref{cutoff} has a unique global pathwise solution in the sense of Definition \ref{def-pathwise}.
\end{proposition}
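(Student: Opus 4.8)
The plan is to combine the existence of martingale solutions (Proposition \ref{martingale}) with the pathwise uniqueness (Proposition \ref{pathwise-uniqueness}) by means of the Gy\"ongy--Krylov criterion (Lemma \ref{lem-gyongy}), thereby upgrading a martingale solution to a genuine pathwise one living on the original stochastic basis $\mathcal{S}$. The engine of the argument is a \emph{doubled-variable} compactness scheme applied to the Galerkin sequence $\{u_{n}\}_{n\in\N}$ from \eqref{eq:galerkin-system}.

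First I would fix the enlarged phase space $\overline{\mathcal{X}}=\mathcal{X}_S\times\mathcal{X}_S\times\mathcal{X}_W$ and, for each pair $(m,n)$, introduce the joint law $\nu_{m,n}$ on $\overline{\mathcal{X}}$ of the triple $(u_m,u_n,W)$. Since each marginal $\mu_n^S$ is tight by Proposition \ref{prop3.3} and $\mu_W$ is trivially tight, the family $\{\nu_{m,n}\}$ is tight on $\overline{\mathcal{X}}$. Given any subsequence of $\{\nu_{m,n}\}$, the Skorokhod representation (Lemma \ref{lem-skorokhod}) furnishes a further subsequence, a new probability space, and random elements $(\overline{u}^{(1)},\overline{u}^{(2)},\overline{W})$ converging almost surely in $\overline{\mathcal{X}}$, whose joint law is the weak limit $\mu$.

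Next I would argue, exactly as in the passage to the limit carried out in the proof of Proposition \ref{martingale} and using the uniform moment bounds of Proposition \ref{prop3.2} transported to the new space, that \emph{each} of the two components $\overline{u}^{(1)}$ and $\overline{u}^{(2)}$ is a global martingale solution of the cut-off problem \eqref{cutoff} relative to the \emph{same} driving noise $\overline{W}$ and the common filtration generated by the triple. As both solve \eqref{cutoff} on one and the same stochastic basis with the same initial datum, the pathwise uniqueness of Proposition \ref{pathwise-uniqueness} forces $\overline{u}^{(1)}=\overline{u}^{(2)}$ almost surely, so $\mu$ is concentrated on the diagonal $\{(u,v):u=v\}$. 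The hypotheses of Lemma \ref{lem-gyongy} are thus met, and it yields convergence of $\{u_{n}\}$ in probability in $\mathcal{X}_S=C([0,T];H^{s-1})$ to a limit $u$; a diagonal argument over $T=1,2,\dots$ produces a global-in-time limit.

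Finally, passing to an almost surely convergent subsequence shows that $u$ is $\mathcal{F}_t$-predictable on $\mathcal{S}$; taking limits in the Galerkin identity together with the uniform bounds of Proposition \ref{prop3.2} confirms that $(u,\infty)$ satisfies the weak formulation \eqref{pathwise-solutions} (the cut-off renders the estimates global). The regularity $u\in L^2(\Omega;C([0,\infty);H^s))$ is recovered from the uniform $L^{2p}(\Omega;C([0,T];H^s))$ bound via weak lower semicontinuity together with the standard weak-continuity-in-$H^s$/strong-continuity-in-$H^{s-1}$ argument, and uniqueness of the pathwise solution is immediate from Proposition \ref{pathwise-uniqueness}. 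The \emph{main obstacle} lies in the third step: correctly identifying \emph{both} limiting components as solutions of \eqref{cutoff} driven by the \emph{same} Brownian motion, which requires verifying that the martingale and quadratic-variation structure is preserved in the doubled-variable limit, together with the concomitant upgrade of the strong convergence from $H^{s-1}$ back to the target space $H^s$.
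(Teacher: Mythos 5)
Your proposal is correct and follows essentially the same route as the paper: the doubled-variable Gy\"ongy--Krylov scheme built on the tightness of Proposition \ref{prop3.3}, the Skorokhod representation, identification of both limit components as global martingale solutions of the cut-off system, and the pathwise uniqueness of Proposition \ref{pathwise-uniqueness} to concentrate the limit law on the diagonal. The additional remarks on the diagonal argument in $T$ and the recovery of $C([0,\infty);H^s)$ regularity are consistent elaborations of steps the paper treats more briefly.
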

\begin{proof}
    The main target is to show that if $\mathcal{S}=(\Omega,\mathcal{F},\mathbb{P},\{\mathcal{F}_t\}_{t\ge 0},W)$ is given in advance, for the approximate solutions $u_{n}$, we can first take limit $n\to \infty$, which is relative to the original probability space $(\Omega,\mathcal{F},\mathbb{P})$, to build a pathwise solution under the given basis $\mathcal{S}$.
    Define a sequence of measures
    \begin{equation*}
        \nu_{i,j}(\cdot)=\mathbb{P}\{(u_{i},u_{j})\in \cdot\},\quad \ \pi_{i,j}(\cdot)=\mathbb{P}\{(u_{i},u_{j},W)\in \cdot\},
    \end{equation*}
    on $\mathcal{B}(\mathcal{X}_S\times\mathcal{X}_S)$ and $\mathcal{B}(\mathcal{X}_S\times\mathcal{X}_S\times\mathcal{X}_W)$, where $\mathcal{X}_S=C([0,T];H^{s-1}(\mathbb{T}^d))$ and $\mathcal{X}_W=C([0,T];\mathfrak{U})$. With only minor modifications to the arguements in Proposition \ref{prop3.3}, we see that the collection $\{\pi_{i,j}\}_{i,j\ge 1}$ is weakly compact and hence tight. By lemma \ref{lem-skorokhod}, we can extract a subsequence (still denoted by itself) such that $\pi_{i,j}\to\pi\in\mathcal{P}r(\mathcal{X}_S\times\mathcal{X}_S\times\mathcal{X}_W)$, and further choose a probability space $(\widetilde{\Omega},\widetilde{\mathcal{F}},\widetilde{\mathbb{P}})$ on which there exists a sequence of random variables $(\widetilde{u}_{i},\widetilde{u}_{j},\widetilde{W}_{i,j})$ converging almost surely in $\mathcal{X}_S\times\mathcal{X}_S\times\mathcal{X}_W$ to a random variable $(\widetilde{u},\widetilde{u}^*,\widetilde{W})$ and
    \begin{equation*}
        \widetilde{\mathbb{P}}\{(\widetilde{u}_{i},\widetilde{u}_{j},\widetilde{W}_{i,j})\in\cdot\}=\pi_{i,j}(\cdot).
    \end{equation*}
    Observe that in particular, $\nu_{i,j}$ converges weakly to a measure $\nu$ defined by
    \begin{equation*}
        \nu(\cdot)=\widetilde{\mathbb{P}}\{(\widetilde{u},\widetilde{u}^*)\in\cdot\}.
    \end{equation*}
    Let $\widetilde{\mathcal{S}}=(\widetilde{\Omega},\widetilde{\mathcal{F}},\{\widetilde{\mathcal{F}}_{t}\}_{t\ge 0},\widetilde{\mathbb{P}},\widetilde{W})$ with $\widetilde{\mathcal{F}}_{t}=\sigma\{\widetilde{u}(r),\widetilde{u}^*(r),\widetilde{W}(r)\}_{r\in[0,t]}$. Following the argument at the beginning of Section \ref{section3.5}, we infer that both $(\widetilde{\mathcal{S}},\widetilde{u},\infty)$ and $(\widetilde{\mathcal{S}},\widetilde{u}^*,\infty)$ are the martingale solutions to \eqref{SHKS-trans}. Since $\widetilde{u}(0)=\widetilde{u}^*(0)$, it follows from Proposition \ref{pathwise-uniqueness} that
    \begin{equation*}
        \nu\{(\widetilde{u},\widetilde{u}^*)\in\mathcal{X}_S\times\mathcal{X}_S,\widetilde{u}=\widetilde{u}^*\}=1.
    \end{equation*}
    Therefore, we can conclude from Lemma \ref{lem-gyongy} that, on the original probability space $(\Omega,\mathcal{F},\mathbb{P})$, $u_{n}\to u$ almost surely in the topology of $\mathcal{X}_S$. Having obtained this convergence and referring again to \eqref{L2p-estimate}, we may thus show that $u$ is a pathwise solution of \eqref{eq:galerkin-system}.
\end{proof}

\subsection{Local existence for SHKS \eqref{SHKS-trans} with smooth initial profile}
The construction of local maximal pathwise solutions to SHKS \eqref{SHKS-trans} can now be presented.
\begin{proposition}\label{prop3.12}
    Let $s>\frac{d}{2}+3$, and $u_0\in L^2(\Omega;H^s(\mathbb{T}^d)$ be a $\mathcal{F}_0$-measurable $H^s(\mathbb{T}^d)$-valued random element. Assume that the condition \textbf{(A1)}-\textbf{(A2)} hold. Then the Cauchy problem \eqref{SHKS-trans} has a unique local maximal pathwise solution $(u,\{\tau_n\}_{n\ge 1},\xi)$ in the sense of Definition \ref{def-pathwise}.
\end{proposition}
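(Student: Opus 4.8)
The plan is to remove the truncation $\theta_R$ introduced in \eqref{cutoff} by a standard localization argument, feeding on the global well-posedness of the cut-off problem (Proposition \ref{prop-3.9}) together with the pathwise uniqueness estimate of Proposition \ref{pathwise-uniqueness}. Since $s>\frac{d}{2}+3$ we have the embedding $H^{s-1}(\mathbb{T}^d)\hookrightarrow W^{1,\infty}(\mathbb{T}^d)$, and the solutions furnished by Proposition \ref{prop-3.9} have $\mathbb{P}$-a.s. continuous paths in $H^{s-1}$; this continuity is exactly what lets us track the scalar quantity $\|u\|_{W^{1,\infty}}$ on which the cut-off depends, and it is the reason the argument is run at the level $s-1$ rather than $s$ (where only boundedness, not continuity, is available).

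First I would, for each integer $n\ge 1$, apply Proposition \ref{prop-3.9} with $R=n$ to obtain on the fixed basis $\mathcal{S}$ a unique global pathwise solution $u^n$ of the $n$-truncated system \eqref{cutoff}, and define
\[
\tau_n:=\inf\{t\ge 0:\|u^n(t)\|_{W^{1,\infty}}\ge n\},\qquad \inf\emptyset=\infty .
\]
Because $u_0\in H^s\hookrightarrow W^{1,\infty}$ almost surely, the continuity of $t\mapsto\|u^n(t)\|_{W^{1,\infty}}$ gives $\tau_n>0$ $\mathbb{P}$-a.s. for $n$ large, and on $[0,\tau_n]$ the argument $\|u^n\|_{W^{1,\infty}}$ never exceeds $n$, so $\theta_n(\|u^n\|_{W^{1,\infty}})\equiv 1$ there. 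Hence $u^n$ solves the un-truncated system \eqref{SHKS-trans} on $[0,\tau_n]$.

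The consistency step is where the real work lies. For $m>n$ put $\tau_n^m:=\inf\{t\ge 0:\|u^m(t)\|_{W^{1,\infty}}\ge n\}$. On $[0,\tau_n\wedge\tau_n^m]$ both $\|u^n\|_{W^{1,\infty}}$ and $\|u^m\|_{W^{1,\infty}}$ stay below $n\le m$, so \emph{all} the cut-offs are inactive and $u^n,u^m$ satisfy the same equation \eqref{SHKS-trans} with identical data. I would then run the $H^{s-2}$ energy–Gronwall estimate of Proposition \ref{pathwise-uniqueness} verbatim on the stopped interval $[0,t\wedge\tau_n\wedge\tau_n^m]$—the proof there is already localized through the times $\xi_K$, so no new estimate is needed—to conclude $u^n=u^m$ on $[0,\tau_n\wedge\tau_n^m]$. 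This equality forces the two level-$n$ hitting times to coincide, $\tau_n=\tau_n^m$, whence $u^n=u^m$ on $[0,\tau_n]$ and $\tau_n\le\tau_m$. Thus $\{\tau_n\}$ is nondecreasing and the family $\{u^n\}$ is consistent; I expect this matching of successive hitting levels, rather than any single estimate, to be the main obstacle, since it is what converts global cut-off solutions into a genuine local solution of \eqref{SHKS-trans}.

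Finally I would set $\xi:=\lim_{n\to\infty}\tau_n=\sup_n\tau_n$ and define $u$ on $[0,\xi)$ by $u(t):=u^n(t)$ for $t\in[0,\tau_n]$; consistency makes this unambiguous and $\mathcal{F}_t$-predictable. Each pair $(u,\tau_n)$ is then a local pathwise solution in the sense of Definition \ref{def-pathwise}: the integral identity \eqref{pathwise-solutions} holds on $[0,\tau_n]$ since the cut-off is inactive, and the regularity $u(\cdot\wedge\tau_n)\in L^2(\Omega;C([0,\infty);H^s))$ comes from the uniform a priori bound \eqref{L2p-estimate} of Proposition \ref{prop3.2}. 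For maximality, on $\{\xi<\infty\}$ the $W^{1,\infty}$-continuity yields $\|u(\tau_n)\|_{W^{1,\infty}}=n$, so $\sup_{t\in[0,\tau_n]}\|u(t)\|_{W^{1,\infty}}\ge n$, which is precisely the blow-up condition. Uniqueness of the triple $(u,\{\tau_n\},\xi)$ reduces once more to the localized form of Proposition \ref{pathwise-uniqueness}: any two maximal solutions agree up to the minimum of their stopping times, and letting the truncation levels tend to infinity gives indistinguishability on $[0,\xi_1\wedge\xi_2)$, completing the proof.
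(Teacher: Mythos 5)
Your overall strategy---solve the cut-off problem for each truncation level, show consistency of the resulting family on the overlap of the hitting times, and glue---is the standard localization route, and the consistency/gluing part is essentially what the paper also does (it delegates the passage to a maximal solution to the references). However, your proposal skips the reduction to almost-surely bounded initial data, and this creates two genuine gaps. First, Proposition \ref{prop-3.9} (and the a priori bound \eqref{L2p-estimate} you invoke for the regularity $u(\cdot\wedge\tau_n)\in L^2(\Omega;C([0,\infty);H^s))$) requires $u_0\in L^p(\Omega;H^s)$ for some $p>2$, whereas Proposition \ref{prop3.12} only assumes $u_0\in L^2(\Omega;H^s)$; you cannot apply Proposition \ref{prop-3.9} directly to $u_0$. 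Second, for a \emph{fixed} $n$ your stopping time $\tau_n=\inf\{t:\|u^n(t)\|_{W^{1,\infty}}\ge n\}$ vanishes on the event $\{\|u_0\|_{W^{1,\infty}}\ge n\}$, which has positive probability whenever $\|u_0\|_{W^{1,\infty}}$ is unbounded; your hedge ``for $n$ large'' does not repair this, since no finite $n$ works uniformly in $\omega$. Definition \ref{def-pathwise}(iii) requires \emph{each} pair $(u,\tau_n)$ to be a local pathwise solution, hence each $\tau_n$ to be strictly positive a.s., so the triple you construct does not meet the definition as written.

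The paper closes both gaps with one device you are missing: decompose $u_0^k=u_0\,\mathbbm{1}_{\{k\le\|u_0\|_{H^s}<k+1\}}$, so that each piece is a.s.\ bounded in $H^s$ (hence in every $L^p(\Omega;H^s)$ and, by the embedding constant $D$, bounded in $W^{1,\infty}$ by $D(k+1)$); for each piece one chooses $R>D(k+1)$, gets a strictly positive exit time $\tau_k=\inf\{t:\|u^k\|_{H^s}>k+1\}$ on which the cut-off is identically one, and then reassembles $u=\sum_k u^k\mathbbm{1}_{\{k\le\|u_0\|_{H^s}<k+1\}}$, $\tau=\sum_k\tau_k\mathbbm{1}_{\{k\le\|u_0\|_{H^s}<k+1\}}$, checking that the stochastic integral commutes with the indicator sum. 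Your consistency argument (running the $H^{s-2}$ energy--Gronwall estimate of Proposition \ref{pathwise-uniqueness} on the common stopped interval where both cut-offs are inactive, and matching the successive hitting levels) is correct and is the right tool for the subsequent extension to a maximal solution, but it should be run \emph{after} this reduction, not in place of it.
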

\begin{proof}
    Based on Proposition \ref{martingale}-\ref{pathwise-uniqueness} and Lemma \ref{lem-gyongy}, by applying the similar argument in Proposition \ref{prop-3.9}, we can prove that for any given $R>1$, the fixed stochastic basis $\mathcal{S}$, there exists a global pathwise solution $u$ to the cut-off problem \eqref{cutoff}. It remains to remove the truncation function $\theta_R$ in \eqref{cutoff} to construct the local pathwise solutions for \eqref{SHKS-trans}.

    \textsc{Step 1:} We first suppose that for some deterministic $M>0$, $\Vert u_0(\omega)\Vert_{H^s}<M$, $\mathbb{P}$-a.s., and let $D>0$ be the embedding constant such that $\Vert u\Vert_{W^{1,\infty}}\le D \Vert u\Vert_{H^s}$. Let $R>DM$ and define the stopping time
    \begin{equation*}
        \tau_M=\inf\{t\ge 0:\Vert u\Vert_{H^s}>M\}.
    \end{equation*}
    Then $\tau_M$ is strictly positive almost surely, and $\Vert u\Vert_{W^{1,\infty}}\le D\Vert u\Vert_{H^s}\le DM<R$ for any $t\in[0,\tau_M]$. Hence $\theta_R(\Vert u\Vert_{W^{1,\infty}})=1$. Then the pair $(u,\tau_M)$ is a unique pathwise solution to \eqref{SHKS-trans}.

    \textsc{Step 2: }To pass to the general case $\Vert u_0\Vert_{H^s}<\infty$ almost surely, we consider the decomposition
    \begin{equation*}
        u_0^k=u_0 \mathbbm{1}_{\{k\le \Vert u_0\Vert_{H^s}< k+1\}},\ k\in\mathbb{N}.
    \end{equation*}
    Then for each $k$, by choosing $R>D(k+1)$, we can construct a unique local pathwise solution $(u^k,\tau_k)$ to \eqref{SHKS-trans} with initial profile $u_0^k$. We define the pair $(u,\tau)$ by
    \begin{equation*}
        u=\sum_{k\in\mathbb{N}}u^k\mathbbm{1}_{\{k\le \Vert u_0\Vert_{H^s}< k+1\}},\quad\tau=\sum_{k\in\mathbb{N}}\tau_k\mathbbm{1}_{\{k\le\Vert u_0\Vert_{H^s}<k+1\}}.
    \end{equation*}
    Since $u^k\in C([0,\tau_k];H^s(\mathbb{T}^d))$ almost surely, we infer that $u\in C([0,\tau];H^s(\mathbb{T}^d))$ almost surely. Using the fact of $\mathbb{E}\Vert u_0\Vert_{H^s}^2<\infty$, we infer that $1=\sum_{k\in\mathbb{N}}\mathbbm{1}_{\{k\le \Vert u_0\Vert_{H^s}<k+1\}}$ almost surely. We apply Fatou's lemma to the uniform estimates in Proposition \ref{prop3.2} to find that
\begin{equation}\label{pp3.10-uniform}
    {\mathbb{E}}\sup_{t\in[0,T]}\Vert {u}\Vert_{H^s}^{2p}+{\mathbb{E}}\Vert {u}\Vert_{W^{\alpha,2p}([0,T];H^{s-1})}^{2p}\le C_{s,p,R,T,u_0,\alpha},\quad \ \forall T>0.
\end{equation}Therefore, by applying the uniform bounds \eqref{pp3.10-uniform}, we have
    \begin{equation*}
        \begin{aligned}
            \sup_{t\in[0,\tau]}\Vert u(t)\Vert_{H^s}^2&=\sum_{k\in\mathbb{N}}\mathbbm{1}_{\{k\le \Vert u_0\Vert_{H^s}<k+1\}}\sup_{t\in[0,\tau_k]}\Vert u^k(t)\Vert_{H^s}^2\\
            &\le \sum_{k\in\mathbb{N}}\mathbbm{1}_{\{k\le \Vert u_0\Vert_{H^s}<k+1\}}C_{s,R,T,u_0,\alpha}=C_{s,R,T,u_0,\alpha}<\infty.
        \end{aligned}
    \end{equation*}
By taking expectations, we find that $u(\cdot\land\tau)\in L^2(\Omega;C([0,\infty);H^s(\mathbb{T}^d)))$. Moreover, we have
\begin{align*}
        u(t\land\tau)=&\sum_{k\in\mathbb{N}}\mathbbm{1}_{\{k\le \Vert u_0\Vert_{H^s}<k+1\}}u^k(t\land\tau_k)\\
        =&\sum_{k\in\mathbb{N}}\mathbbm{1}_{\{k\le \Vert u_0\Vert_{H^s}<k+1\}}\bigg(u_0^k-\int_0^{t\land \tau_k}((1-2u^k)\nabla S(u^k)\cdot\nabla u^k+(u^k-(u^k)^2)\Delta S(u^k))\mathrm{d}r\\
        & + \int_0^{t\land \tau_k}\sigma(r,u^k)\mathrm{d}W(r)\bigg)\\
        =&\sum_{k\in\mathbb{N}}\mathbbm{1}_{\{k\le \Vert u_0\Vert_{H^s}<k+1\}}K(t\land\tau,u^k)=K(t\land \tau,u),
\end{align*}
where
\begin{equation*}
    K(t\land\tau,u)=u_0-\int_0^{t\land \tau}((1-2u)\nabla S(u)\cdot\nabla u+(u-u^2)\Delta S(u))\mathrm{d}r+\int_0^{t\land\tau}\sigma(r,u)\mathrm{d}W(r).
\end{equation*}
Therefore, the pair $(u,\tau)$ constructed above is a unique local pathwise solution to \eqref{SHKS-trans}. Finally, the passage from $(u,\tau)$ to a maximal solution in the sense of Definition \ref{def-pathwise} is standard and may be carried out as in \cite{glattholtz2014,jacod2006,mikulevicius2004}. We complete the proof.
\end{proof}
\section{ Local well-posedness in $H^s(\mathbb{T}^d)$}\label{section3}
For $s>\frac{d}{2}+1$, we now establish the local existence of solutions for any initial data $u_0\in H^s(\mathbb{T}^d)$, which is $\mathcal{F}_0$ measurable. For this propose we consider a sequence of equations
\begin{equation}\label{sequence-eq}
    \left\{
    \begin{aligned}
        &\mathrm{d}u^j(t)+((1-2u^j)\nabla S(u^j)\cdot\nabla u^j\\
        &\quad \ +(u^j-(u^j)^2)\Delta S(u^j))\mathrm{d}t=\sigma(t,u^j)\mathrm{d}W(t),&&x\in \mathbb{T}^d,\ t>0,\\
        &u^j(0,x)=u^j_0(x)=P_{\frac{1}{j}}u_0(x), &&x\in \mathbb{T}^d,
    \end{aligned}
    \right.
\end{equation}
where $P_{\frac{1}{j}}$ is the projection operator defined by \eqref{def-p_n}. It follows from Proposition \ref{prop3.12} that the functions $u^j$ in \eqref{sequence-eq} are well-defined. Employing an abstract Cauchy criterion, we prove that the sequence $u^j$ convergences to an limit $u$ up to a stopping time $\tau$, with convergence in the topology of $C([0,\tau];H^s(\mathbb{T}^d)).$

For a given $T>0$, we define $$\tau_{j,k,T}=\tau_{j,T}\land \tau_{k,T},\quad \forall j,k\ge 1,$$ where $$\tau_{j,T}=\inf\{t\ge 0:\Vert u^j(t)\Vert_{H^s}\ge 2 +\Vert u^j_0\Vert_{H^s}\}\land T.$$ We now recall the following Cauchy criteria.

\begin{lemma}[Abstract Cauchy Lemma \cite{holtz2009,mikulevicius2004}]\label{abstract-cauchy-lemma}
    Suppose that $(X,\|\cdot\|_X)$ and $(Y,\|\cdot\|_Y)$ are Banach space with continuous embedding $Y\subset X$. Define the space
    \begin{equation*}
        \mathcal{E}(T):=C([0,T];X)\cap L^2(0,T;Y),
    \end{equation*}
    with the norm
    \begin{equation*}
        \|v\|_{\mathcal{E}(T)}:=\bigg(\sup_{t\in[0,T]}\|v(t)\|_{X}^2+\int_0^T\|v(t)\|_Y^2\dd t\bigg)^{\frac{1}{2}}.
    \end{equation*}
    Let $\{v_n\}_{n\in\N}$ be a sequence of $Y$-valued stochastic process so that for every $T>0$, $v_n\in\mathcal{E}(T)$ a.s. For any $M>1$ and $T>0$, define a collection of stopping times
    \begin{equation*}
        \mathcal{T}_n^{M,T}:=\{\tau\le T:\|v_n\|_{\mathcal{E}(\tau)}\le M+\|v_n(0)\|_X\},
    \end{equation*}
    and let $\mathcal{T}_{n,m}^{M,T}:=\mathcal{T}_n^{M,T}\cap \mathcal{T}_m^{M,T}$.

    (i) Suppose that for some $M>1$ and $T>0$ 
    \begin{equation}\label{ac-1}
        \lim_{n\to \infty}\sup_{m\ge n}\sup_{\tau\in\mathcal{T}_{m,n}^{M,T}}\mathbb{E}(\|v_n-v_m\|_{\mathcal{E}(T)})=0,
    \end{equation}
    \begin{equation}\label{ac-2}
        \lim_{S\to 0}\sup_{n\in\N}\sup_{\tau\in\mathcal{T}_n^{M,T}}\P(\|v_n\|_{\mathcal{E}(\tau\land S)}^2>\|v_n(0)\|_{X}^2+(M-1)^2)=0.
    \end{equation}
    Then there exists a stopping time $\tau$ with $\P(0<\tau\le T)=1$ and a process $v$ with $v(\cdot\land T)\in\mathcal{E}(\tau)$ such that
    \begin{equation*}
        \|v_{n_l}-v\|_{\mathcal{E}(\tau)}\to 0,\quad\text{a.s.}\ \ \text{for some subsequence}\ n_l\uparrow\infty,
    \end{equation*}
    \begin{equation*}
        \|v\|_{\mathcal{E}(\tau)}\le M+\sup_{n\in\N}\|v_n(0)\|_X,\quad \text{a.s.}
    \end{equation*}

    (ii) If, in addition to \eqref{ac-1} and \eqref{ac-2}, we also have
    \begin{equation}
        \sup_{n\in\N}\mathbb{E}(\|v_n(0)\|_X^p)<\infty,\quad\text{for some}\ p\in[1,\infty),
    \end{equation}
    then
    \begin{equation}
        \mathbb{E}(\|v\|_{\mathcal{E}(\tau)}^p)\le C_p\left(M^p+\sup_n\mathbb{E}(\|v_n(0)\|_X^p)\right).
    \end{equation}
    Furthermore, there exists a sequence of measurable sets $\Omega_l\uparrow\Omega$ such that
    \begin{equation}
        \sup_{l\in\N}\mathbb{E}\left(\mathbbm{1}_{\Omega_l}\|v_{n_l}\|_{\mathcal{E}(\tau)}^p\right)<\infty,
    \end{equation}
    \begin{equation}
        \lim_{l\to\infty}\mathbb{E}\left(\mathbbm{1}_{\Omega_l}\|v_{n_l}-v\|_{\mathcal{E}(\tau)}^p\right)=0.
    \end{equation}
\end{lemma}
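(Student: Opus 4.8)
The plan is to treat this as a deterministic-style Cauchy/completeness argument that has been localized in time by stopping times: the two hypotheses respectively supply an \emph{a.s.\ Cauchy} property (from \eqref{ac-1}) and a \emph{uniform lower bound on the time of existence} (from \eqref{ac-2}), and the conclusion then follows by combining them with the completeness of $\mathcal{E}(\tau)$ and the monotonicity of $t\mapsto\|w\|_{\mathcal{E}(t)}$. First I would record the elementary but decisive observation that, for every fixed $w\in\mathcal{E}(T)$, the map $t\mapsto\|w\|_{\mathcal{E}(t)}$ is continuous and nondecreasing. Consequently the maximal exit time
\[
\tau_n:=\inf\{t\in[0,T]:\|v_n\|_{\mathcal{E}(t)}\ge M+\|v_n(0)\|_X\}\wedge T
\]
is a genuine stopping time, it is the largest element of $\mathcal{T}_n^{M,T}$, and for any two indices the pairwise minimum satisfies $\tau_n\wedge\tau_m\in\mathcal{T}_{n,m}^{M,T}$ (by monotonicity, restricting to the smaller horizon cannot increase either norm). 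Since $\|v_n\|_{\mathcal{E}(0)}=\|v_n(0)\|_X<M+\|v_n(0)\|_X$, continuity also gives $\tau_n>0$ a.s.\ for each fixed $n$.

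Next I would use \eqref{ac-1} to extract a rapidly convergent subsequence. Because $\tau_{n}\wedge\tau_{m}\in\mathcal{T}_{n,m}^{M,T}$ is an admissible choice in the inner supremum over $\tau\in\mathcal{T}_{m,n}^{M,T}$, and that double supremum tends to zero, I can select $n_l\uparrow\infty$ with $\mathbb{E}\,\|v_{n_{l+1}}-v_{n_l}\|_{\mathcal{E}(\tau_{n_l}\wedge\tau_{n_{l+1}})}\le 2^{-2l}$. By Tonelli's theorem the series $\sum_l\|v_{n_{l+1}}-v_{n_l}\|_{\mathcal{E}(\tau_{n_l}\wedge\tau_{n_{l+1}})}$ has finite expectation, hence converges a.s. The monotonicity of the $\mathcal{E}(\cdot)$-norm is exactly what transfers these pairwise, stopping-time-dependent bounds to a \emph{common} time: setting $\tau_\ast:=\liminf_l\tau_{n_l}$, for every $t<\tau_\ast$ one has $t\le\tau_{n_l}\wedge\tau_{n_{l+1}}$ for all large $l$, whence $\sum_l\|v_{n_{l+1}}-v_{n_l}\|_{\mathcal{E}(t)}<\infty$ a.s.\ and $\{v_{n_l}\}$ is Cauchy in the Banach space $\mathcal{E}(t)$. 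Completeness yields a limit $v$, consistently defined on the stochastic interval $[0,\tau_\ast)$, and lower semicontinuity of the norm gives the a priori bound $\|v\|_{\mathcal{E}(t)}\le M+\sup_n\|v_n(0)\|_X$.

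The main obstacle is to show that the limiting stopping time is a.s.\ strictly positive, i.e.\ $\mathbb{P}(\tau_\ast>0)=1$, and this is precisely where \eqref{ac-2} is indispensable. The first step is to convert \eqref{ac-2} into a uniform-in-$n$ lower bound on the exit times: using the elementary inequality $(M+a)^2>a^2+(M-1)^2$ valid for $M>1$ and $a\ge 0$, one checks that $\{\tau_n<S\}\subset\{\|v_n\|_{\mathcal{E}(\tau_n\wedge S)}^2>\|v_n(0)\|_X^2+(M-1)^2\}$ (on $\{\tau_n<S\le T\}$ one has $\tau_n\wedge S=\tau_n$ and $\|v_n\|_{\mathcal{E}(\tau_n)}=M+\|v_n(0)\|_X$), so that with the admissible choice $\tau=\tau_n$ in \eqref{ac-2} we obtain $\eta(S):=\sup_n\mathbb{P}(\tau_n<S)\to 0$ as $S\to0$. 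The delicate point, and the heart of the proof, is to upgrade this in-probability smallness, which holds uniformly in $n$ but only for a fixed window $S$, into a.s.\ positivity of $\tau_\ast=\liminf_l\tau_{n_l}$; the mechanism is to use the rapid convergence of the chosen subsequence \emph{in tandem} with \eqref{ac-2} to prevent $\{\tau_{n_l}<S\}$ from occurring for infinitely many $l$ (a naive union bound fails, since the individual probabilities are only uniformly small, not summable). Once $\tau_\ast>0$ a.s.\ is secured, I set $\tau:=\tau_\ast$ (intersected, if convenient, with the first-exit time of the limit $v$ itself), which satisfies $\mathbb{P}(0<\tau\le T)=1$, $v(\cdot\wedge T)\in\mathcal{E}(\tau)$, and $\|v_{n_l}-v\|_{\mathcal{E}(\tau)}\to 0$ a.s., together with the stated bound on $\|v\|_{\mathcal{E}(\tau)}$.

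Finally, for part (ii) the additional hypothesis $\sup_n\mathbb{E}\,\|v_n(0)\|_X^p<\infty$ makes the moment statements routine. Writing $\|v\|_{\mathcal{E}(\tau)}=\lim_l\|v_{n_l}\|_{\mathcal{E}(\tau)}\le\liminf_l\big(M+\|v_{n_l}(0)\|_X\big)$ a.s.\ and applying Fatou's lemma gives $\mathbb{E}\,\|v\|_{\mathcal{E}(\tau)}^p\le C_p\big(M^p+\sup_n\mathbb{E}\,\|v_n(0)\|_X^p\big)$, where the Fatou bound crucially replaces the pathwise $\sup_n\|v_n(0)\|_X$ by the controlled $\sup_n\mathbb{E}\,\|v_n(0)\|_X^p$. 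To upgrade the a.s.\ convergence of part (i) to $L^p$ convergence along an exhausting family, I would introduce truncation sets $\Omega_l$ (built from the levels $\{\tau_{n_j}\ge S\}$ together with uniform control of the Cauchy tails $\sum_{j\ge l}\|v_{n_{j+1}}-v_{n_j}\|_{\mathcal{E}(\tau)}$) on which $\mathbbm{1}_{\Omega_l}\|v_{n_l}\|_{\mathcal{E}(\tau)}^p$ is dominated; positivity of $\tau_\ast$ ensures $\Omega_l\uparrow\Omega$, and dominated convergence then delivers both $\sup_l\mathbb{E}\big(\mathbbm{1}_{\Omega_l}\|v_{n_l}\|_{\mathcal{E}(\tau)}^p\big)<\infty$ and $\lim_l\mathbb{E}\big(\mathbbm{1}_{\Omega_l}\|v_{n_l}-v\|_{\mathcal{E}(\tau)}^p\big)=0$.
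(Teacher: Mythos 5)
First, a point of reference: the paper does not prove this lemma at all --- it is imported verbatim from the cited works of Glatt-Holtz--Ziane and Mikulevicius--Rozovskii, so there is no in-paper argument to compare yours against. Judged on its own terms, your proposal assembles the correct skeleton (maximal exit times $\tau_n$ via continuity and monotonicity of $t\mapsto\|v_n\|_{\mathcal{E}(t)}$, a fast subsequence from \eqref{ac-1} plus Chebyshev and Borel--Cantelli, completeness of $\mathcal{E}(t)$ on $[0,\liminf_l\tau_{n_l})$, Fatou for the moment bound in (ii)), but it stops exactly at the one step that carries the real content of the lemma. You write that ``the mechanism is to use the rapid convergence of the chosen subsequence in tandem with \eqref{ac-2} to prevent $\{\tau_{n_l}<S\}$ from occurring for infinitely many $l$,'' correctly note that a union bound fails, and then move on. That sentence is a statement of the goal, not an argument; as written, $\mathbb{P}(\liminf_l\tau_{n_l}>0)=1$ is asserted, not proved, and everything downstream (the existence of $\tau$ with $\mathbb{P}(0<\tau\le T)=1$, hence the whole conclusion) rests on it.

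The missing idea is the cross-index comparison that exploits the strict gap between the level $(M-1)^2$ appearing in \eqref{ac-2} and the level $M$ defining $\mathcal{T}_n^{M,T}$ --- in your write-up the $(M-1)^2$ is used only in the trivial inclusion $\{\tau_n<S\}\subset\{\|v_n\|^2_{\mathcal{E}(\tau_n\wedge S)}>\|v_n(0)\|_X^2+(M-1)^2\}$, which by itself only yields the non-summable bound $\sup_n\mathbb{P}(\tau_n<S)\le\eta(S)$. The standard route introduces the intermediate hitting times $\rho_n:=\inf\{t:\|v_n\|^2_{\mathcal{E}(t)}\ge\|v_n(0)\|_X^2+(M-1)^2\}\wedge T\le\tau_n$ and observes that on the event $\{\tau_m<\rho_n\}$ one has $\|v_m\|_{\mathcal{E}(\tau_m)}=M+\|v_m(0)\|_X$ while $\|v_n\|_{\mathcal{E}(\tau_m)}\le (M-1)+\|v_n(0)\|_X$, whence the reverse triangle inequality (absorbing $\bigl|\|v_m(0)\|_X-\|v_n(0)\|_X\bigr|\le\|v_m-v_n\|_{\mathcal{E}(\tau_m\wedge\tau_n)}$) forces $\|v_m-v_n\|_{\mathcal{E}(\tau_m\wedge\tau_n)}\ge\tfrac12$. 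This converts the expectation bound \eqref{ac-1} into a quantitative, Chebyshev-controllable bound on the probability that $v_m$ exits at level $M$ before $v_n$ reaches level $M-1$; combined with $\mathbb{P}(\rho_n<S)\le\eta(S)$ and the decomposition $\{\tau<S\}\subset\{\tau<\rho_{n_l}\}\cup\{\rho_{n_l}<S\}$, this is what actually yields positivity of the limit stopping time. Without some version of this two-level comparison your argument cannot close, because, as you yourself observe, the single-level estimate $\sup_n\mathbb{P}(\tau_n<S)\le\eta(S)$ is not summable along the subsequence. The same incompleteness, in milder form, affects part (ii): the sets $\Omega_l$ are described only as ``built from'' certain levels, without verifying domination or $\Omega_l\uparrow\Omega$, though that part is routine once (i) is in place.
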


In   view of Lemma \ref{abstract-cauchy-lemma}, we shall establish the essential convergence needed for Theorem \ref{result1} in the general case by verifying \eqref{ac-1} and \eqref{ac-2}. Without loss of generality, we may first consider the case where $\Vert u_0\Vert_{H^s}\le M$ holds almost surely for some fixed constant $M$, and then extend to the general case $\Vert u_0\Vert_{H^s}<\infty$ almost surely via a decomposition method outlined in Proposition \ref{prop3.12}. Furthermore, the boundedness property of the projection operator $\Vert P_{n}u\Vert_{H^s}\le C_s\Vert u\Vert_{H^s}$, implies the uniform bound $\sup_{j\ge 1}\Vert u_0^j\Vert_{H^s}\le C_s\Vert u_0\Vert_{H^s}\le C_sM$, which plays a pivotal role in the following estimates.
\begin{proposition}\label{ac-1-proof}
    The sequence $\{u^j\}_{j\ge1}$ constructed in \eqref{sequence-eq} satisfies the convergence property stated in \eqref{ac-1}.
\end{proposition}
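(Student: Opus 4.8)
The plan is to derive a closed Gronwall inequality for the difference $v:=u^n-u^m$ (say $m\ge n$) in the lower-order norm $H^{s-1}$ and to use the closeness of the two initial data. Subtracting two copies of \eqref{sequence-eq} gives
\begin{equation*}
\mathrm{d}v+\big(G(u^n)-G(u^m)\big)\mathrm{d}t=\big(\sigma(t,u^n)-\sigma(t,u^m)\big)\mathrm{d}W,\qquad v(0)=(P_n-P_m)u_0,
\end{equation*}
where $G(w)=(1-2w)\nabla S(w)\cdot\nabla w+(w-w^2)\Delta S(w)$. The spectral error estimate for $P_j$ gives $\|v(0)\|_{H^{s-1}}\le Cn^{-1}\|u_0\|_{H^s}$, hence $\sup_{m\ge n}\mathbb{E}\|v(0)\|_{H^{s-1}}^2\to0$; this is the only place where the gap between the indices $n$ and $m$ is exploited. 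On the stopping times in $\mathcal{T}_{m,n}^{M,T}$ both $\|u^n\|_{H^s}$ and $\|u^m\|_{H^s}$ are bounded by $M+\|u_0\|_{H^s}=:C_M$, so every coefficient assembled from $u^n,u^m$ and their smoothings $S(\cdot)=(1-\Delta)^{-1}(\cdot)$ is controlled in $W^{1,\infty}$ and in $H^s$. I estimate in $H^{s-1}$ rather than $H^s$ precisely because the coefficient-difference part of the drift forces one factor of $\nabla u^m$, and $\|\nabla u^m\|_{H^{s-1}}=\|u^m\|_{H^s}\le C_M$ is available while $\|\nabla u^m\|_{H^s}=\|u^m\|_{H^{s+1}}$ is not.

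Applying $\Lambda^{s-1}$ and It\^o's formula to $\|v\|_{H^{s-1}}^2$ produces a martingale term, the drift pairing $-2(\Lambda^{s-1}(G(u^n)-G(u^m)),\Lambda^{s-1}v)_2$, and the It\^o correction $\|\sigma(t,u^n)-\sigma(t,u^m)\|_{\mathcal{L}_2(\mathfrak{U};H^{s-1})}^2$. I would split the convection difference as $a^n\cdot\nabla v+(a^n-a^m)\cdot\nabla u^m$ with $a^j:=(1-2u^j)\nabla S(u^j)$. The transport-type piece $a^n\cdot\nabla v$ is handled exactly as in the a priori bound \eqref{pp3.2-3}: integrate by parts in the top-order contribution (paying $\|\nabla\cdot a^n\|_{L^\infty}\le C_M$) and control the remainder by the Kato--Ponce commutator Lemma \ref{lem-kato}. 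The second piece is linear in $v$ once one uses the linearity of $S$, namely $a^n-a^m=-2v\,\nabla S(u^n)+(1-2u^m)\nabla S(v)$, together with the one-derivative gain $\|\nabla S(v)\|_{H^{s-1}}\lesssim\|v\|_{H^{s-1}}$, giving $\|(a^n-a^m)\cdot\nabla u^m\|_{H^{s-1}}\le C_M\|v\|_{H^{s-1}}$; the quadratic term $(w-w^2)\Delta S(w)$ is treated like the term $H_2$ in Proposition \ref{pathwise-uniqueness}. The noise terms are bounded through \textbf{(A2)} at the $H^{s-1}$ level by $C_M\|v\|_{H^{s-1}}^2$. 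Taking the supremum in time, using Burkholder--Davis--Gundy to absorb half of the martingale contribution, and taking expectations would yield
\begin{equation*}
\mathbb{E}\sup_{r\in[0,\tau]}\|v(r)\|_{H^{s-1}}^2\le C\,\mathbb{E}\|v(0)\|_{H^{s-1}}^2+C_M\int_0^T\mathbb{E}\sup_{r\in[0,t\land\tau]}\|v(r)\|_{H^{s-1}}^2\,\mathrm{d}t,
\end{equation*}
and Gronwall together with the initial-data smallness gives \eqref{ac-1}.

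The main obstacle is the borderline regularity $s>\frac d2+1$. In the commutator bound for the transport piece, Lemma \ref{lem-kato} produces the factor $\|a^n\|_{H^{s-1}}\|\nabla v\|_{L^\infty}$, and $\|\nabla v\|_{L^\infty}$ is controlled by $\|v\|_{H^{s-1}}$ only when $s-1>\frac d2+1$; on the sharp range $\frac d2+1<s\le\frac d2+2$ one can merely interpolate $\|\nabla v\|_{L^\infty}$ between $\|v\|_{H^{s-1}}$ and the a priori–bounded $\|v\|_{H^s}$, which does not by itself close a clean Gronwall loop. This is exactly why the argument is organized around the mixed space $\mathcal{E}(\tau)=C([0,\tau];H^{s-1})\cap L^2(0,\tau;H^s)$ of Lemma \ref{abstract-cauchy-lemma}: the genuine contraction is measured in the weak norm $H^{s-1}$, while the strong norm $H^s$ enters only through the uniform bound of Proposition \ref{prop3.2}, so that no full $H^s$ Cauchy property — which the quasilinear loss of one derivative forbids — is ever demanded. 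Tracking this weak/strong split, and the consequent mismatch between the $H^s$-Lipschitz hypothesis \textbf{(A2)} and the $H^{s-1}$ level at which the drift must be estimated, is the delicate point; the companion condition \eqref{ac-2} and the passage from the subsequential limit to a genuine solution then follow the template of \cite{glattholtz2014,zhang2020local}.
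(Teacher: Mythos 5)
There is a genuine gap: your argument delivers a Cauchy property only in $C([0,\tau];H^{s-1})$, whereas the proposition is used with $X=H^s$ in Lemma \ref{abstract-cauchy-lemma} --- the stopping times $\tau_{j,T}$ are defined through $\|u^j(t)\|_{H^s}$, and the proof of Theorem \ref{result1} needs $u^j\to u$ in $C([0,\tau];H^s)$ so that the limit is a continuous $H^s$-valued process as Definition \ref{def-pathwise} demands. Your closing claim that ``no full $H^s$ Cauchy property is ever demanded'' misreads the setup; a uniform $H^s$ bound plus $H^{s-1}$ convergence would only give $u\in L^\infty([0,\tau];H^s)$, not continuity in $H^s$, and it would not even control the $L^2(0,T;Y)$ part of the $\mathcal{E}(T)$-norm in \eqref{ac-1}. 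The obstacle you correctly identify --- that the coefficient-difference piece forces either $\|\nabla v\|_{L^\infty}$ (unavailable from $\|v\|_{H^{s-1}}$ when $\tfrac d2+1<s\le\tfrac d2+2$) or an extra derivative on $u^j$ --- is flagged but never actually resolved in your writeup.

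The paper's mechanism, which you are missing, is to run the energy estimate for $w^{j,k}=u^j-u^k$ directly at level $H^s$, accept the dangerous term $\|w^{j,k}\|_{H^{s-1}}^2\|u^j\|_{H^{s+1}}^2$ produced by pairing the coefficient difference (which gains one derivative, hence lives in $H^{s-1}$ via $L^\infty$) against $\nabla u^j$ at top order, and then control this mixed quantity by a \emph{second} It\^o-product estimate for $\|w^{j,k}\|_{H^{s-1}}^2\|u^j\|_{H^{s+1}}^2$ itself (see \eqref{K_sum}--\eqref{ac1gronwall}). The Gronwall loop closes because at $t=0$ the spectral estimates give $\|u_0-P_ju_0\|_{H^{s-1}}^2=o(j^{-2})$ while $\|P_ju_0\|_{H^{s+1}}^2\le C_sj^2\|u_0\|_{H^s}^2$, so the product of initial data vanishes as $j\to\infty$ uniformly in $k\ge j$ (see \eqref{4.27}); the remaining $H^{s-1}$ difference term is handled as in Proposition \ref{pathwise-uniqueness}. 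This two-tier structure --- weak-norm smallness of the difference beating the strong-norm growth of the approximants --- is the essential idea for quasilinear equations with a loss of one derivative, and without it your single Gronwall inequality in $H^{s-1}$ does not prove \eqref{ac-1} as the paper needs it.
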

\begin{proof}
    We fix arbitrary $j,k\ge 1$ and denote $w^{j,k}=u^j-u^k$. It follows from \eqref{sequence-eq} that
    \begin{equation}
        \left\{
        \begin{aligned}
            &\mathrm{d}w^{j,k}(t)+((1-2u^j)\nabla S(u^j)\cdot\nabla u^j-(1-2u^k)\nabla S(u^k)\cdot\nabla u^k\\
            &\quad \ +(u^j-(u^j)^2)\Delta S(u^j)-(u^k-(u^k)^2)\Delta S(u^k))\mathrm{d}t=(\sigma(t,u^j)-\sigma(t,u^k))\mathrm{d}W(t),\\
            &w^{j,k}(0)=P_ju_0-P_ku_0.
        \end{aligned}
        \right.
    \end{equation}
    Applying the It\^o lemma to $(\Lambda^s w^{j,k},\Lambda^s w^{j,k})_2$, we have
    \begin{equation}
        \begin{aligned}\label{ac1proof-estimates}
            \mathrm{d}\Vert w^{j,k}(t)\Vert_{H^s}^2
            &=2\sum_{k\ge 1}(\Lambda^s w^{j,k},\Lambda^s(\sigma(t,u^j)e_k-\sigma(t,u^k)e_k))_2\mathrm{d}W_k\\
            &\quad\ -2(\Lambda^sw^{j,k},\Lambda^s((1-2u^j)\nabla S(u^j)\cdot\nabla u^j-(1-2u^k)\nabla S(u^k)\cdot\nabla u^k))_2\mathrm{d}t\\
            &\quad\ -2(\Lambda^s w^{j,k},\Lambda^s((u^j-(u^j)^2)\Delta S(u^j)-(u^k-(u^k)^2)\Delta S(u^k)))_2\mathrm{d}t\\
            &\quad \ +\Vert \Lambda^s(\sigma (u^j)-\sigma(t,u^k))\Vert_{\mathcal{L}_2(\mathfrak{U};L^2)}^2\mathrm{d}t\\
            &=\sum_{k\ge 1}I_{1_k,s}\mathrm{d}W_k+(I_{2,s}+I_{3,s}+I_{4,s})\mathrm{d}t.
        \end{aligned}
    \end{equation}
    To bound the terms involving $I_{1,s}$, we make use of the Burkholder-Davis-Gundy inequality under condition (\textbf{A2}), demonstrating that for any finite stopping time $\tau$, the following inequality is satisfied
        \begin{align}\label{ac1proof-I_{1,s}}
            &\mathbb{E}\sum_{k\ge 1}\sup_{r\in[0,\tau]}\bigg\vert\int_0^r I_{1_k,s}\mathrm{d}W_k\bigg\vert\nonumber\\
            &\quad\ \le C_s\mathbb{E}\sum_{k\ge 1}\bigg(\int_0^{\tau}\bigg(\int_{\mathbb{T}^d}\Lambda^s w^{j,k}\cdot\Lambda^s (\sigma(t,u^j)e_k-\sigma(t,u^k)e_k)\mathrm{d}x\bigg)^2\mathrm{d}r\bigg)^{\frac{1}{2}}\nonumber\\
            &\quad \ \le C_s\mathbb{E}\bigg(\int_0^{\tau}\Vert w^{j,k}\Vert_{H^s}^2\Vert \sigma(t,u^j)-\sigma(t,u^k)\Vert_{\mathcal{L}_2(\mathfrak{U};H^s)}^2\bigg)^{\frac{1}{2}}\\
            &\quad \ \le C_s\mathbb{E}\bigg(\sup_{t\in[0,\tau]}\Vert w^{j,k}\Vert_{H^s}^2\int_0^{\tau}\gamma(\Vert u^j\Vert_{W^{1,\infty}}+\Vert u^k\Vert_{W^{1,\infty}})\Vert w^{j,k}\Vert_{H^s}^2\mathrm{d}r\bigg)^{\frac{1}{2}}\nonumber\\
            &\quad\ \le \frac{1}{2}\mathbb{E}\sup_{t\in[0,\tau]}\Vert w^{j,k}\Vert_{H^s}^2+C_s\mathbb{E}\int_0^\tau\gamma(\Vert u^j\Vert_{W^{1,\infty}}+\Vert u^k\Vert_{W^{1,\infty}})\Vert w^{j,k}\Vert_{H^s}^2\mathrm{d}r.\nonumber
        \end{align}
    The following transformation can be performed on $I_{2,s}$
    \begin{align*}
        I_{2,s}&=-2(\Lambda^sw^{j,k},\Lambda^s((1-2u^j)\nabla S(u^j)\cdot\nabla u^j-(1-2u^k)\nabla S(u^k)\cdot\nabla u^j)_2\\
            &\quad\ +(1-2u^k)\nabla S(u^k)\cdot\nabla u^j)-(1-2u^k)\nabla S(u^k)\cdot\nabla u^k))_2\\
            &=-2(\Lambda^s w^{j,k},\Lambda^s(((1-2u^k)\nabla S(w^{j,k})-2w^{j,k}\nabla S(u^j))\cdot\nabla u^j))_2\\
            &\quad \ -2(\Lambda^s w^{j,k},\Lambda^s((1-2u^k)\nabla S(u^k)\cdot \nabla w^{j,k}))_2.
    \end{align*}
Applying Lemma \ref{lem-kato}, the H\"{o}lder inequality, and the embedding $H^{s-1}\hookrightarrow L^\infty$, we estimate the nonlinear terms involving $I_{2,s}$ as follows
\begin{align*}
    &\int_{\mathbb{T}^d}\vert \Lambda^s w^{j,k}\cdot\Lambda^s(((1-2u^k)\nabla S(w^{j,k})-2w^{j,k}\nabla S(u^j))\cdot\nabla u^j))\vert\mathrm{d}x\\
        &\quad \ \le C_s\Vert w^{j,k}\Vert_{H^s}(\Vert u^{k}\Vert_{H^s}\Vert w^{j,k}\Vert_{H^s}\Vert \nabla u^j\Vert_{L^\infty}+\Vert u^k\Vert_{L^\infty}\Vert w^{j,k}\Vert_{L^\infty}\Vert \nabla u^j\Vert_{H^s}\\
        &\quad\ \quad\ +\Vert w^{j,k}\Vert_{H^s}\Vert u^j\Vert_{L^\infty}^2+\Vert w^{j,k}\Vert_{L^\infty}\Vert u^j\Vert_{H^s}^2)\\
        &\quad\ \le C_s(\Vert w^{j,k}\Vert_{H^s}^2\Vert u^k\Vert_{H^s}\Vert u^j\Vert_{H^{s}}+\Vert w^{j,k}\Vert_{H^s}\Vert u^k\Vert_{H^{s-1}}\Vert w^{j,k}\Vert_{H^{s-1}}\Vert u^j\Vert_{H^{s+1}}\\
        &\quad\ \quad\ +\Vert w^{j,k}\Vert_{H^s}^2\Vert u^k\Vert_{H^{s-1}}^2+\Vert w^{j,k}\Vert_{H^s}\Vert w^{j,k}\Vert_{H^{s-1}}\Vert u^j\Vert_{H^s}^2)\\
        &\quad\ \le C_s\Vert w^{j,k}\Vert_{H^s}^2(\Vert u^j\Vert_{H^s}^2+\Vert u^k\Vert_{H^s}^2)+C_s\Vert w^{j,k}\Vert_{H^{s-1}}^2\Vert u^j\Vert_{H^{s+1}}^2.
\end{align*}
Using the commutator estimates and integrating by parts, we also have
\begin{align*}
    &\int_{\mathbb{T}^d}\vert\Lambda^s w^{j,k}\cdot\Lambda^s((1-2u^k)\nabla S(u^k)\cdot \nabla w^{j,k})\vert\mathrm{d}x\\
        &\quad\ \le\int_{\mathbb{T}^d}\vert \Lambda^s w^{j,k}\cdot[\Lambda^s,(1-2u^k)\nabla S(u^k)]\nabla w^{j,k}\vert\mathrm{d}x\\
        &\quad\ \quad\ +\frac{1}{2}\int_{\mathbb{T}^d}\vert(\Lambda^s w^{j,k})^2\cdot\nabla ((1-2u^k)\nabla S(u^k))\vert\mathrm{d}x\\
        &\quad\ \le C_s(\Vert w^{j,k}\Vert_{H^s}\Vert [\Lambda^s,(1-2u^k)\nabla S(u^k)]\nabla w^{j,k}\Vert_{L^2}+\Vert w^{j,k}\Vert_{H^s}^2\Vert u^k\Vert_{L^\infty}\Vert \nabla u^k\Vert_{L^\infty})\\
        &\quad\ \le C_s(\Vert w^{j,k}\Vert_{H^s}\Vert\Lambda^s (1-2u^k)\nabla S(u^k)\Vert_{L^2}\Vert w^{j,k}\Vert_{L^\infty}+\Vert w^{j,k}\Vert_{H^s}^2\Vert u^k\Vert_{L^\infty}\Vert \nabla u^k\Vert_{L^\infty})\\
        &\quad\ \le C_s\Vert w^{j,k}\Vert_{H^s}^2\Vert u^k\Vert_{H^s}^2.
\end{align*}
Therefore, based on the last two estimates and the definition of $\tau_{j,k,T}$, we can conclude that
\begin{equation}
    \begin{aligned}
        &\mathbb{E}\int_0^{t\land \tau_{j,k,T}}\vert I_{2,s}\vert\mathrm{d}r\\
        &\quad\ \le C_s\mathbb{E}\int_0^{t\land\tau_{j,k,T}}\Vert w^{j,k}\Vert_{H^s}^2(\Vert u^j\Vert_{H^s}^2+\Vert u^k\Vert_{H^s}^2)+\Vert w^{j,k}\Vert_{H^{s-1}}^2\Vert u^j\Vert_{H^{s+1}}^2\mathrm{d}r\\
        &\quad\ \le C_s\mathbb{E}\int_0^{t\land\tau_{j,k,T}}2(2+C_sM)^2\Vert w^{j,k}\Vert_{H^s}^2\mathrm{d}r+C_s\mathbb{E}\int_0^{t\land\tau_{j,k,T}}\Vert w^{j,k}\Vert_{H^{s-1}}^2\Vert u^j\Vert_{H^{s+1}}^2\mathrm{d}r.
    \end{aligned}
\end{equation}
To estimate the terms involving $I_{3,s}$, we apply Lemma \ref{lem-kato} to deduce that
\begin{equation}
    \begin{aligned}
        \mathbb{E}\int_0^{t\land\tau_{j,k,T}}\vert I_{3,s}\vert\mathrm{d}r&\le C_s\mathbb{E}\int_0^{t\land\tau_{j,k,T}}\Vert w^{j,k}\Vert_{H^s}^2(\Vert u^j\Vert_{H^s}^2+\Vert u^k\Vert_{H^s}^2)\\
        &\le C_s\mathbb{E}\int_0^{t\land\tau_{j,k,T}}2(2+C_sM)^2\Vert w^{j,k}\Vert_{H^s}^2\mathrm{d}r.
    \end{aligned}
\end{equation}
In view of the locally Lipschitz condition \textbf{(A2)}, together with the monotonic property of $\gamma(\cdot)$ and and the Sobolev embedding $H^s\hookrightarrow W^{1,\infty}$, one can  estimate the term  involving $I_{4,s}$ as follows
\begin{equation}\label{ac1proof-I_{4,s}}
    \begin{aligned}
        \mathbb{E}\int_0^{t\land\tau_{j,k,T}}\vert I_{4,s}\vert\mathrm{d}r&\le C_s\mathbb{E}\int_0^{t\land\tau_{j,k,T}}\gamma(\Vert u^j\Vert_{W^{1,\infty}}+\Vert u^k\Vert_{W^{1,\infty}})\Vert w^{j,k}\Vert_{H^{s-2}}^2\mathrm{d}r\\
        &\le C_s\mathbb{E}\int_0^{t\land\tau_{j,k,T}}\gamma(4+2C_sM)\Vert w^{j,k}\Vert_{H^s}^2\mathrm{d}r.
    \end{aligned}
\end{equation}
    By combining the estimates \eqref{ac1proof-I_{1,s}}-\eqref{ac1proof-I_{4,s}} and setting $\tau=t\land\tau_{j,k,T}$ in \eqref{ac1proof-I_{1,s}}, we obtain from \eqref{ac1proof-estimates} the following result
    \begin{equation}
        \begin{aligned}
            \mathbb{E}\sup_{r\in[0,t\land\tau_{j,k,T}]}\Vert w^{j,k}\Vert_{H^s}^2&\le 2\mathbb{ E}\|w^{j,k}(0)\|_{H^s}^2+C_{s,M}\int_0^t\bigg(\mathbb{E}\sup_{t'\in[0,r\land \tau_{j,k,T}]}\Vert w^{j,k}(t')\Vert\\
            &\quad\ +\mathbb{E}\sup_{t'\in[0,r\land\tau_{j,k,T}]}\Vert w^{j,k}(t')\Vert_{H^{s-1}}^2\|w^{j}(t')\|_{H^{s+1}}^2\bigg)\mathrm{d}r.
        \end{aligned}
    \end{equation}
Applying the Gronwall inequality, we have
\begin{equation}\label{ac1proof-w-bound}
    \mathbb{E}\sup_{t\in[0,\tau_{j,k,T}]}\| w^{j,k}(t)\|_{H^s}^2\le C_{s,M,\gamma,T}\bigg(\mathbb{E}\| w^{j,k}(0)\|_{H^s}^2+\mathbb{E}\sup_{t'\in[0,\tau_{j,k,T}]}(\|w^{j,k}(t')\|_{H^{s-1}}^2\|u^j(t')\|_{H^{s+1}}^2)\bigg).
\end{equation}
Recall that $P_{n}$ is uniformly bounded with respect to $n\in\N$ and satisfies \eqref{eq:spectral-error}, which implies that $\lim_{j\to \infty}\sup_{k\ge j}\Vert w^{j,k}(0)\Vert_{H^s}^2=0$. Therefore, in order to verify \eqref{ac-1}, it suffices to show that in \eqref{ac1proof-w-bound}
\begin{equation}\label{ac1proof-remain}
    \lim_{j\to \infty}\sup_{k\ge j}\mathbb{E}\sup_{t'\in[0,\tau_{j,k,T}]}(\| w^{j,k}(t')\|_{H^{s-1}}^2\| u^k(t')\|_{H^{s+1}}^2)=0.
\end{equation}
To show this, we begin by applying It\^o formula to $ \| u^j\|_{H^{s+1}}^2$ to find that
\begin{equation}\label{ac1proof-L}
    \begin{aligned}
        \mathrm{d}\| u^j\|_{H^{s+1}}^2&=2\sum_{k\ge 1}(\Lambda^{s+1} u^j,\Lambda^{s+1}\sigma(t,u^j)e_k)_2\mathrm{d}W_k(t)\\
        &\quad\ -2(\Lambda^{s+1} u^j,\Lambda^{s+1}((1-2u^j)\nabla S(u^j)\cdot \nabla u^j))_2\mathrm{d}t\\
        &\quad\ -2(\Lambda^{s+1}u^j,\Lambda^{s+1}((u^j-(u^j)^2))\Delta S(u^j))_2\mathrm{d}t\\
        &\quad\ +\Vert \Lambda^{s+1}\sigma(t,u^j)\|_{\mathcal{L}_2(\mathfrak{U};L^2)}^2\mathrm{d}t\\
        &=\sum_{k\ge 1}L_{1_k,s+1}\mathrm{d}W_k(t)+(L_{2,s+1}+L_{3,s+1}+L_{4,s+1})\mathrm{d}t.
    \end{aligned}
\end{equation}
As a result, by an application of the It\^o product rule for \eqref{ac1proof-estimates} and \eqref{ac1proof-L} (with $s$ replaced by $s-1$ throughout), we have
\begin{equation}
    \begin{aligned}
        &\mathrm{d}(\Vert w^{j,k}\|_{H^{s-1}}^2\| u^k\|_{H^{s+1}}^2) =\sum_{k\ge 1}(\| w^{j,k}\|_{H^{s-1}}^2 L_{1_k,s+1}+\| u^k\|_{H^{s+1}}^2I_{1_k,s-1})\mathrm{d}W_k(t)\\
        &\quad\ +\sum_{i=2}^4\| w^{j,k}\|_{H^{s-1}}^2 L_{i,s+1}\mathrm{d}t+\sum_{i=2}^4\| u^k\|_{H^{s+1}}^2 I_{i,s-1}\mathrm{d}t +\sum_{k,l\ge 1}I_{1_k,s-1}L_{1_l,s+1}\mathrm{d}t.
    \end{aligned}
\end{equation}
Thus we have
\begin{align}\label{K_sum}
    &\mathbb{E}\sup_{r\in[0,t\land\tau_{j,k,T}]}(\| u^{j,k}(r)\|_{H^{s-1}}^2\|u^k(r)\|_{H^{s+1}}^2)\nonumber\\
        &\quad\ \le \mathbb{E}(\| u^{j,k}(0)\|_{H^{s-1}}^2\| u^k(0)\|_{H^{s+1}}^2)+\mathbb{E}\sup_{r\in[0,t\land\tau_{j,k,T}]}\bigg\vert \sum_{k\ge 1}\int_0^t\| w^{j,k}\|_{H^{s-1}}^2 L_{1_k,s+1}\mathrm{d}W_k(r)\bigg\vert\nonumber\\
        &\quad\ \quad\ +\mathbb{E}\sup_{r\in[0,t\land\tau_{j,k,T}]}\bigg\vert\sum_{k\ge 1}\int_0^t\| u^k\|_{H^{s+1}}^2I_{1_k,s-1}\mathrm{d}W_k(r)\bigg\vert+\sum_{i=2}^4\mathbb{E}\int_0^{t\land\tau_{j,k,T}}\| w^{j,k}\|_{H^{s-1}}^2 \vert L_{i,s+1}\vert\mathrm{d}r\\
        &\quad\ \quad\ +\sum_{i=2}^4\mathbb{E}\int_0^{t\land\tau_{j,k,T}}\| u^k\|_{H^{s+1}}^2 \vert I_{i,s-1}\vert\mathrm{d}r+\mathbb{E}\sum_{k,l\ge 1}\int_0^{t\land\tau_{j,k,T}}\vert I_{1_k,s-1}L_{1_l,s+1}\vert \mathrm{d}r\nonumber\\
        &\quad\ =\mathbb{E}(\| u^{j,k}(0)\|_{H^{s-1}}^2\| u^k(0)\|_{H^{s+1}}^2)+\sum_{i=1}^5 K_i.\nonumber
\end{align}
For the stochastic term $K_1$, we establish the following estimate through an application of the Burkholder-Davis-Gundy inequality combined with assumption \textbf{(A1)}
\begin{equation}\label{K_1}
    \begin{aligned}
        K_1&\le C\mathbb{E}\bigg(\sum_{k\ge 1}\int_0^{t\land\tau_{j,k,T}}\| w^{j,k}\|_{H^{s-1}}^4 L_{1k,s+1}^2\mathrm{d}r\bigg)^{\frac{1}{2}}\\
        &\le C_s\mathbb{E}\bigg(\int_0^{t\land\tau_{j,k,T}}\|w^{j,k}\|_{H^{s-1}}^4\|u^j\|_{H^{s+1}}^2\|\sigma(r,u^j)\|_{\mathcal{L}_2(\mathfrak{U};H^{s+1})}^2\mathrm{d}r\bigg)^{\frac{1}{2}}\\
        &\le \frac{1}{4}\mathbb{E}\sup_{r\in[0,t\land\tau_{j,k,T}]}\|w^{j,k}\|_{H^{s-1}}^2\|u^j\|_{H^{s+1}}^2\\
        &\quad\ +C_s\mathbb{E}\int_0^{t\land\tau_{j,k,T}}\|w^{j,k}\|_{H^{s-1}}^2\beta(\|u^j\|_{H^{s}})(1+\|u^j\|_{H^{s+1}}^2)\mathrm{d}r\\
        &\le \frac{1}{4}\mathbb{E}\sup_{r\in[0,t\land\tau_{j,k,T}]}\|w^{j,k}\|_{H^{s-1}}^2\|u^j\|_{H^{s+1}}^2+C_{s,M,\beta,T}\mathbb{E}\sup_{r\in[0,t\land\tau_{j,k,T}]}\| w^{j,k}(r)\|_{H^{s-1}}^2\\
        &\quad\ +C_{s,M,\beta}\int_0^{t}\mathbb{E}\sup_{t'\in[0,t\land\tau_{j,k,T}]}\|w^{j,k}(t')\|_{H^{s-1}}^2\|u^j(t')\|_{H^{s+1}}^2\mathrm{d}r.
    \end{aligned}
\end{equation}
Following the similar argument as above, we derive the following bound for $K_2$
\begin{equation}
    \begin{aligned}
        K_2\le& \frac{1}{4}\mathbb{E}\sup_{r\in[0,t\land\tau_{j,k,T}]}\|w^{j,k}\|_{H^{s-1}}^2\|u^j\|_{H^{s+1}}^2\\
        & +C_{s,M,\gamma}\int_0^{t}\mathbb{E}\sup_{t'\in[0,r\land\tau_{j,k,T}]}\|w^{j,k}(t')\|_{H^{s-1}}^2\|u^j(t')\|_{H^{s+1}}^2\mathrm{d}r.
    \end{aligned}
\end{equation}
After using the Sobolev embedding $H^s\hookrightarrow W^{1,\infty}$ for $s>\frac{d}{2}+1$, combined with the estimates from Lemma \ref{lem-algebra} and Lemma \ref{lem-kato}, we obtain the following bounds for the integral terms $K_3$
\begin{equation}
    \begin{aligned}
        K_3&\le C_s \mathbb{E}\int_0^{t\land\tau_{j,k,T}}\| w^{j,k}\|_{H^{s-1}}^2\|u^j\|_{H^{s+1}}^2(\|u^j\|_{H^s}^2+\beta(\|u^j\|_{H^s}))\mathrm{d}r\\
        &\quad\ +C_s\mathbb{E}\int_0^{t\land\tau_{j,k,T}}\| w^{j,k}\|_{H^{s-1}}^2\beta(\|u^j\|_{H^s})\mathrm{d}r\\
        &\le C_{s,M,\beta}\int_0^t\mathbb{E}\sup_{t'\in[0,r\land\tau_{j,k,T}]}\| w^{j,k}(t')\|_{H^{s-1}}^2\|u^j(t')\|_{H^{s+1}}^2\mathrm{d}r\\
        &\quad\ +C_{s,M,\beta,T}\mathbb{E}\sup_{r\in [0,t\land\tau_{j,k,T}]}\| w^{j,k}(r)\|_{H^{s-1}}^2.
    \end{aligned}
\end{equation}
For the terms $K_4$, we can repeat the procedure as in \eqref{ac1proof-I_{1,s}}-\eqref{ac1proof-I_{4,s}} to find that
\begin{align}
    K_4&\le C_s\mathbb{E}\int_0^{t\land\tau_{j,k,T}}\|w^{j,k}\|_{H^s}^2(\|u^j\|_{H^s}^2+\|u^k\|_{H^s}^2)+\|w^{j,k}\|_{H^{s-1}}^2\|u^j\|_{H^{s+1}}^2\mathrm{d}r\nonumber\\
        &\quad\ +C_s\mathbb{E}\int_0^{t\land\tau_{j,k,T}}\gamma(\|u^j\|_{W^{1,\infty}}+\|u^k\|_{W^{1,\infty}})\| u^{j,k}\|_{H^{s-1}}^2\mathrm{d}r\nonumber\\
        &\le C_s\int_0^t\mathbb{E}\sup_{t'\in[0,r\land\tau_{j,k,T}]}\|w^{j,k}(t')\|_{H^{s-1}}^2\|u^j(t')\|_{H^{s+1}}^2\mathrm{d}r\\
        &\quad\ +C_{s,M,\gamma,T}\mathbb{E}\sup_{r\in[0,t\land\tau_{j,k,T}]}\|w^{j,k}(r)\|_{H^{s-1}}^2.\nonumber
\end{align}
Finally, building upon assumptions \textbf{(A1)}-\textbf{(A2)}, we establish the following estimate for the integral term $K_5$
\begin{equation}\label{K_5}
    \begin{aligned}
K_5&\le\mathbb{E}\int_0^{t\land\tau_{j,k,T}}\gamma(\|u^j\|_{H^s}+\|u^k\|_{H^s})\|w^{j,k}\|_{H^{s-1}}^2\beta(\|u^j\|_{H^s})(1+\|u^j\|_{H^{s+1}})\|u^j\|_{H^{s+1}}\mathrm{d}t\\
        &\le C_{s,\beta,\gamma}\mathbb{E}\int_0^{t\land\tau_{j,k,T}}(\|w^{j,k}\|_{H^{s-1}}^2\|u^j\|_{H^{s+1}}^2+\|w^{j,k}\|_{H^{s-1}}^2)\mathrm{d}t\\
        &\le C_{s,M,\beta,\gamma}\mathbb{E}\int_0^{t}\sup_{t'\in[0,r\land\tau_{j,k,T}]}(\|w^{j,k}(t')\|_{H^{s-1}}^2\|u^j(t')\|_{H^{s+1}}^2+\|w^{j,k}(t')\|_{H^{s-1}}^2)\mathrm{d}t\\
        &\le C_{s,M,\beta,\gamma,T}\bigg(\mathbb{E}\sup_{r\in[0,t\land\tau_{j,k,T}]}\|w^{j,k}(r)\|_{H^{s-1}}^2+\int_0^t\mathbb{E}\sup_{t'\in[0,r\land\tau_{j,k,T}]}\| w^{j,k}\|_{H^{s-1}}^2\|u^j\|_{H^{s+1}}^2\mathrm{d}r\bigg).
    \end{aligned}
\end{equation}
Combining the estimates \eqref{K_1}-\eqref{K_5} into \eqref{K_sum}, we find

    \begin{align}
        &\mathbb{E}\sup_{r\in[0,\tau_{j,k,T}]}\|w^{j,k}(r)\|_{H^{s-1}}^2\|u^j(r)\|_{H^{s+1}}^2\nonumber\\
        &\quad\ \le 2\mathbb{E}\|w^{j,k}(0)\|_{H^{s-1}}^2\|u^j(0)\|_{H^{s+1}}^2+C_{s,M,\beta,\gamma,T}\bigg(\mathbb{E}\sup_{r\in[0,t\land\tau_{j,k,T}]}\|w^{j,k}(r)\|_{H^{s-1}}^2\\
        &\quad\ +\int_0^t\mathbb{E}\sup_{t'\in[0,r\land\tau_{j,k,T}]}\| w^{j,k}\|_{H^{s-1}}^2\|u^j\|_{H^{s+1}}^2\mathrm{d}r\bigg).\nonumber
    \end{align}

By employing the Gronwall inequality, we obtain the following estimate
\begin{equation}\label{ac1gronwall}
    \begin{aligned}
        \mathbb{E}\sup_{r\in[0,\tau_{j,k,T}]}\|w^{j,k}(r)\|_{H^{s-1}}^2\|u^j(r)\|_{H^{s+1}}^2&\le C_{s,M,\beta,\gamma,T}\mathbb{E}\|w^{j,k}(0)\|_{H^{s-1}}^2\|u^j(0)\|_{H^{s+1}}^2\\
        &\quad\ +C_{s,M,\beta,\gamma,T}\mathbb{E}\sup_{r\in[0,\tau_{j,k,T}]}\|w^{j,k}(r)\|_{H^{s-1}}^2.
    \end{aligned}
\end{equation}
Notice that for any $u_0\in H^s(\mathbb{T}^d)$, the following fundamental properties are satisfied:
\begin{equation*}
    \| u_0-u_0^j\|_{H^{s-1}}^2=o(\frac{1}{j^2}),\quad\ \text{and}\ \|u^j(0)\|_{H^{s+1}}^2\le C_s j^2\|u_0\|_{H^s}^2,
\end{equation*}
from which we can bound the the first term on the right-hand side of \eqref{ac1gronwall} as follows
\begin{equation}\label{4.27}
    \begin{aligned}
        &\lim_{j\to\infty}\sup_{k\ge j}\mathbb{E}\|w^{j,k}(0)\|_{H^{s-1}}^2\|u^j(0)\|_{H^{s+1}}^2\\
        &\quad\ \le C_s\| u_0\|_{H^s}^2\lim_{j\to \infty}\sup_{k\ge j}\mathbb{E}(j^2\|u_0-u^j_0\|_{H^{s-1}}^2+\frac{j^2}{k^2}\cdot k^2\|u_0-u_0^j\|_{H^{s-1}}^2)=0.
    \end{aligned}
\end{equation}
For the second term, analogous reasoning to that employed in Proposition \ref{pathwise-uniqueness} demonstrates that it admits the bound $C\|u_0^j-u_0^k\|_{H^{s-1}}^2$, where the positive constant $C$ depends neither on $j$ nor on $k$. This leads to the following estimate
\begin{equation}
    \lim_{j\to\infty}\sup_{k\ge j}\mathbb{E}\sup_{r\in[0,\tau_{j,k,T}]}\|w^{j,k}(r)\|_{H^{s-1}}^2\le C\lim_{j\to\infty}\sup_{k\ge j}\mathbb{E}\|u^j_0-u^k_0\|_{H^{s-1}}^2=0,
\end{equation}
Combining the above result with the estimates \eqref{ac1gronwall} and \eqref{4.27}, we prove that \eqref{ac1proof-remain} holds. Therefore, by applying \eqref{ac1proof-w-bound} we can obtain the desired convergence conditions \eqref{ac-1}.
\end{proof}

\begin{proposition}\label{ac-2-proof}
    The sequence $\{u^j\}_{j\ge1}$ constructed in \eqref{sequence-eq} satisfies the convergence property stated in \eqref{ac-2}.
\end{proposition}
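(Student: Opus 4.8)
The plan is to verify the small-time smallness condition \eqref{ac-2} directly from an energy estimate, exploiting the a priori control encoded in the stopping times $\mathcal{T}_j^{M,T}$. Applying $\Lambda^s$ to \eqref{sequence-eq} and then the It\^o formula to $\|u^j\|_{H^s}^2$ exactly as in Proposition \ref{prop3.2}, I would write
\begin{equation*}
\|u^j(t)\|_{H^s}^2-\|u_0^j\|_{H^s}^2=\sum_{k\ge 1}\int_0^t I_{1_k}\,\mathrm{d}W_k+\int_0^t(I_2+I_3)\,\mathrm{d}\tau,
\end{equation*}
with $I_{1_k}$, $I_2$, $I_3$ as there (now without the cut-off $\theta_R$). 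The key observation is that for any $\tau\in\mathcal{T}_j^{M,T}$ one has $\|u^j(t)\|_{H^s}\le M+\|u_0^j\|_{H^s}\le(1+C_s)M$ for all $t\le\tau$, so up to $\tau\land S$ every nonlinear factor appearing in $I_2$, $I_3$ and $\sum_k|I_{1_k}|^2$ is bounded by a constant $C_{s,M}$ that is \emph{uniform in $j$}; here I use the embedding $H^s\hookrightarrow W^{1,\infty}$ ($s>\tfrac d2+1$) together with the uniform projection bound $\sup_{j}\|u_0^j\|_{H^s}\le C_s\|u_0\|_{H^s}\le C_sM$.

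With this control in hand I would estimate the two contributions separately. The drift is bounded pathwise on $[0,\tau\land S]$ by combining the Kato--Ponce and algebra estimates (Lemmas \ref{lem-kato}--\ref{lem-algebra}) applied to $I_2$ in Proposition \ref{prop3.2} with assumption \textbf{(A1)} for $I_3$, giving $\int_0^{\tau\land S}(|I_2|+|I_3|)\,\mathrm{d}\tau\le C_{s,M}\,S$. For the martingale term, since $\sum_k|I_{1_k}|^2\le C\|u^j\|_{H^s}^2\|\sigma(\cdot,u^j)\|_{\mathcal{L}_2(\mathfrak{U};H^s)}^2\le C_{s,M}$ on the stopping-time set, the Burkholder--Davis--Gundy inequality yields
\begin{equation*}
\mathbb{E}\sup_{t\in[0,\tau\land S]}\bigg|\sum_{k\ge1}\int_0^t I_{1_k}\,\mathrm{d}W_k\bigg|\le C\,\mathbb{E}\bigg(\int_0^{\tau\land S}\sum_{k\ge1}|I_{1_k}|^2\,\mathrm{d}\tau\bigg)^{1/2}\le C_{s,M}\,S^{1/2}.
\end{equation*}
Adding the two bounds gives $\mathbb{E}\sup_{t\in[0,\tau\land S]}(\|u^j(t)\|_{H^s}^2-\|u_0^j\|_{H^s}^2)\le C_{s,M}(S+S^{1/2})$, with $C_{s,M}$ independent of $j$ and of the particular $\tau\in\mathcal{T}_j^{M,T}$.

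Finally, the Chebyshev inequality converts this moment bound into the desired probability estimate: recalling that the threshold in \eqref{ac-2} is $\|u_0^j\|_{H^s}^2+(M-1)^2$, one obtains
\begin{equation*}
\sup_{j}\sup_{\tau\in\mathcal{T}_j^{M,T}}\P\Big(\|u^j\|_{\mathcal{E}(\tau\land S)}^2>\|u_0^j\|_{H^s}^2+(M-1)^2\Big)\le\frac{C_{s,M}}{(M-1)^2}\big(S+S^{1/2}\big),
\end{equation*}
where any time-integral contribution in the $\mathcal{E}(\tau\land S)$-norm is likewise dominated by $C_{s,M}S$ on the stopping-time set and hence absorbed. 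Letting $S\to0$ proves \eqref{ac-2}. I expect this condition to be considerably milder than \eqref{ac-1}: the genuine loss-of-derivative difficulty was already resolved in Proposition \ref{ac-1-proof}, and the only points requiring care here are the uniformity of all constants in $j$ (guaranteed by the projection bound) and the correct localization of the energy identity to the random interval $[0,\tau\land S]$.
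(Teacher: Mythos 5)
Your proposal is correct and follows essentially the same route as the paper: localize on the stopping-time set so that the embedding $H^s\hookrightarrow W^{1,\infty}$ and the uniform projection bound make every nonlinear factor in the It\^o expansion of $\|u^j\|_{H^s}^2$ bounded by a constant independent of $j$, then bound the drift contribution by $C_{s,M}S$ and the martingale contribution by $C_{s,M}S^{1/2}$, and conclude by a Chebyshev-type argument as $S\to0$. The only cosmetic difference is that the paper splits the exceedance event first and treats the martingale via Doob's maximal inequality plus the It\^o isometry, whereas you estimate $\mathbb{E}\sup$ directly via Burkholder--Davis--Gundy before applying Chebyshev once; the two are interchangeable here.
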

\begin{proof}
    Recall \eqref{ac1proof-L} (with $s+1$ replaced by $s$ throughout). Then for any $R>0$, we have
    \begin{equation}
        \sup_{t\in[0,\tau_{j,k,T}\land R]}\|u^j(t)\|_{H^s}^2\le \| u_0^j\|_{H^s}^2+\sup_{t\in[0,\tau_{j,k,T}\land R]}\bigg\vert \sum_{k\ge 1}\int_0^t L_{1_k,s}\mathrm{d}W_k(t)\bigg\vert+\sum_{i=2}^4\int_0^{\tau_{j,k,T}\land R}\vert L_{i,s}\vert\mathrm{d}t,
    \end{equation}
    which implies that
        \begin{align}
            &\mathbb{P}\bigg\{\sup_{t\in[0,\tau_{j,k,T}\land R]}\| u^k(t)\|_{H^s}^2>\|u_0^j\|_{H^s}^2+1\bigg\}\nonumber\\
            &\quad\ \le \mathbb{P}\bigg\{\sup_{t\in[0,\tau_{j,k,T}\land R]}\bigg\vert \sum_{k\ge 1}\int_0^t L_{1_k,s}\mathrm{d}W_k(t)\bigg\vert>\frac{1}{2}\bigg\}+\mathbb{P}\bigg\{\sum_{i=2}^4\int_0^{\tau_{j,k,T}\land R}\vert L_{i,s}\vert\mathrm{d}t>\frac{1}{2}\bigg\}.
        \end{align}
    Applying the Chebyshev inequality, Lemma \ref{lem-kato}, the condition \textbf{(A1)} and the fact of $H^s(\mathbb{T}^d) \hookrightarrow W^{1,\infty}(\mathbb{T}^d)$, we see that
    \begin{equation}
        \begin{aligned}
            \mathbb{P}\bigg\{\sum_{i=2}^4\int_0^{\tau_{j,k,T}\land S}\vert L_{i,s}\vert\mathrm{d}t>\frac{1}{2}\bigg\}&\le 2(T\land R)\mathbb{E}\sup_{t\in[0,\tau_{j,k,T\land R}]}\sum_{i=2}^4\vert L_{i,s}\vert\\
            &\le C_s(T\land R)\mathbb{E}\sup_{t\in[0,\tau_{j,k,T\land R}]}\Big((\| \nabla u\|_{L^\infty}^2+\|u\|_{L^{\infty}}^2)\|u\|_{H^s}^2\\
            &\quad\ +\beta(\| u^j\|_{W^{1,\infty}})(1+\|u^j\|_{H^s}^2)\Big)\\
            &\le C_{s,M,\beta}(T\land R).
        \end{aligned}
    \end{equation}
    To analyze the stochastic integral term, we employ Doob's maximal inequality together with the It\^o isometry property and the Minkowski inequality to derive the required estimate
    \begin{equation}
        \begin{aligned}
            \mathbb{P}\bigg\{\sup_{t\in[0,\tau_{j,k,T}\land R]}\bigg\vert \sum_{k\ge 1}\int_0^t L_{1_k,s}\mathrm{d}W_k(t)\bigg\vert>\frac{1}{2}\bigg\}&\le 4\mathbb{E}\bigg(\sum_{k\ge 1}\int_0^{\tau_{j,k,T}}\vert L_{1_k,s}\vert\mathrm{d}W_k(t)\bigg)^2\\
            &\le C_s\mathbb{E}\int_0^{\tau_{j,k,T}\land R}\| u^j\|_{H^s}^2\beta (\|u^j\|_{W^{1,\infty}})(1+\| u^j\|_{H^s}^2)\mathrm{d}t\\
            &\le C_s\mathbb{E}\int_0^{\tau_{j,k,T}\land R}C_{s,M}\mathrm{d}t\le C_{s,M,\beta}(T\land R).
        \end{aligned}
    \end{equation}
    Combining the above estimates and the fact that $T\land R\to0$ as $R\to0$, we conclude that condition \eqref{ac-2} is satisfied, which completes the proof of Proposition \ref{ac-2-proof}.
\end{proof}
\begin{proof}[\textbf{\emph{Proof of Theorem \ref{result1}.}}]
Building upon Propositions \ref{ac-1-proof}-\ref{ac-2-proof} and assuming $\|u_0\|_{H^s}\le M$ for some deterministic constant $M>0$. Lemma \ref{abstract-cauchy-lemma} guarantees the existence of a strictly positive stopping time $\tau$ with $\mathbb{P}(0<\tau\le T)=1$ for which the following convergence holds
\begin{equation}\label{pointwise}
    u^j\to u\quad\ \text{in}\ C([0,\tau];H^s(\mathbb{T}^d))\quad\ \text{as}\ j\to\infty,\quad\ \mathbb{P}\text{-a.s.},
\end{equation}
along with the uniform bound
\begin{equation}
    \sup_{j\ge 1}\sup_{t\in[0,\tau]}\|u^j(t)\|_{H^s}\le 2+C_sM<\infty,\quad\ \mathbb{P}\text{-a.s.}
\end{equation}
Since each approximate solution $u^j$ are continuous $\mathcal{F}_t$-adapted $H^s$-valued processes, it follows that the processes $u^j$ are $\mathcal{F}_t$-predictable. The preservation of measurability under pointwise limits then implies, through \eqref{pointwise}, that the limiting process $u$ is also $\mathcal{F}_t$-predictable.
By adapting the methodology developed in Section \ref{section3.5}, we establish that $(u,\tau)$ constitutes a local pathwise solution as specified in Definition \ref{def-pathwise}. The extension to maximal pathwise solutions follows from a standard argument (cf. \cite{glattholtz2014,jacod2006}). This completes the proof.
\end{proof}
\section{Global-in-time solutions}\label{section4}
\subsection{Global result for nonlinear multiplicative noise}
In this subsection, we establish the regularization effect induced by nonlinear multiplicative noise on the $t$-variable of solutions to the stochastic hyperbolic Keller-Segel equation.
\begin{proof}[\textbf{\emph{Proof of Theorem \ref{result2}.}}]
    Based on the conditions specified in Theorem \ref{result2}, it follows from Theorem \ref{result1} that the system \eqref{SHKS-trans} possesses a local strong pathwise solution $(u,\{\tau_n\}_{n\ge 1},\xi)$ in the sense of Definition \ref{def-pathwise}, with $\xi$ representing the maximum existence time. To complete the proof of Theorem \ref{result2}, it remains to demonstrate that $\xi=\infty$, $\mathbb{P}$-almost surely.

    To this end, we begin by applying the projection operator $P_n$ and then the differential operator $\Lambda^s$ to \eqref{SHKS-trans} to obtain
    \begin{equation}
        \begin{aligned}
            &\mathrm{d}\Lambda^sP_{n}u+\Lambda^s P_{n} ((1-2u)\nabla S(u)\cdot \nabla u)\mathrm{d}t\\
            &\quad\ +\Lambda^s P_{n} ((u-u^2)\Delta S(u))\mathrm{d}t=\sum_{i=1}^{\infty}c_i(1+\|u\|_{W^{1,\infty}})^{\delta}\Lambda^s P_{n}u\mathrm{d}W_i(t).
        \end{aligned}
    \end{equation}
    By using the It\^o formula, we arrive at
        \begin{align*}
            &\| P_{n}u(t)\|_{H^s}^2-\|P_{n}u_0\|_{H^s}^2\\
            &\quad\ =2\sum_{i=1}^\infty \int_0^tc_i(1+\|u\|_{W^{1,\infty}})^{\delta}\|P_{N }u\|_{H^s}^2\mathrm{d}W_i(r)+\sum_{i=1}^{\infty}\int_0^t\|\Lambda^s P_{n} c_i(1+\|u\|_{W^{1,\infty}})^{\delta}u\|_{L^2}^2\mathrm{d}r\\
            &\quad\ \quad\ -2\int_0^t(\Lambda^s P_{n} u,\Lambda^s(P_{n}((1-2u)\nabla S(u)\cdot \nabla u)+P_{n}((u-u^2)\Delta S(u)))_2\mathrm{d}r.
        \end{align*}
    To establish the global existence of the solution, we employ the It\^o formula once more to the logarithmic functional $\ln (e+\|P_{n}u(t)\|_{H^s}^2)$,  which yields
    \begin{equation}\label{logarithmic}
        \begin{aligned}
            &\ln(e+\| P_{n}u(t)\|_{H^s}^2)-\ln(e+\|P_{n}u_0\|_{H^s}^2)\\
            &\quad\ =-2\int_0^t\frac{(\Lambda^s P_{n}u,\Lambda^s(P_{n}((1-2u)\nabla S(u)\cdot \nabla u)+P_{n}((u-u^2)\Delta S(u)))_2}{e+\|P_{n}u(r)\|_{H^s}^2}\mathrm{d}r\\
            &\quad\ \quad\ +\sum_{i=1}^{\infty}\int_0^t\frac{\|\Lambda^s P_{n}c_i(1+\| u\|_{W^{1,\infty}})^{\delta}u\|_{L^2}^2}{e+\|P_{n}u(r)\|_{H^s}^2}\mathrm{d}r-2\sum_{i=1}^{\infty}\int_0^t\frac{c_i^2(1+\|u\|_{W^{1,\infty}})^{2\delta}\|P_{n}u\|_{H^s}^4}{(e+\|P_{n}u(r)\|_{H^s}^2)^2}\mathrm{d}r\\
            &\quad\ \quad\ +2\sum_{i=1}^{\infty}\int_0^t\frac{c_i(1+\|u\|_{W^{1,\infty}})^{\delta}\|P_{n}u\|_{H^s}^2}{e+\|P_{n}u(r)\|_{H^s}^2}\mathrm{d}W_i(r).
        \end{aligned}
    \end{equation}

    Furthermore, by employing commutator estimates along with the Sobolev embedding $H^s(\mathbb{T}^d)\hookrightarrow W^{1,\infty}(\mathbb{T}^d)$ for $s>\frac{d}{2}+1$, and following the steps as in Lemma 2.4 in \cite{SMEP2}, we derive that
    \begin{equation}\label{5.3}
        \begin{aligned}
            &|(P_{n}\Lambda^s u,P_{n}\Lambda^s((1-2u)\nabla S(u)\cdot \nabla u)+P_{n}((u-u^2)\Delta S(u)))_2|\le \kappa\|u\|_{W^{1,\infty}}\|u\|_{H^s}^2,
        \end{aligned}
    \end{equation}
    where $\kappa$ is a constant independent of $n$. By integrating \eqref{logarithmic} over the time interval $[0,t\land\tau_l]$, where the stopping time $\tau_l$ is defined as
    \begin{equation*}
        \tau_l=\inf\{t\ge 0:\| u(t)\|_{H^s}\ge l\},\quad\ \forall l\in\mathbb{N},
    \end{equation*}
    after taking the mathematical expectation on both sides, we deduce from the inequality \eqref{5.3} that
    \begin{equation}\label{5.4}
        \begin{aligned}
            &\mathbb{E}\ln(e+\| P_{n}u(t\land\tau_l)\|_{H^s}^2)-\mathbb{E}\ln(e+\|P_{n}u_0\|_{H^s}^2)\\
            &\quad\ \le\mathbb{E}\int_0^{t\land \tau_l}\frac{\kappa\|u\|_{W^{1,\infty}}\|u\|_{H^s}^2}{e+\|P_{n}u(r)\|_{H^s}^2}\mathrm{d}r+\mathbb{E}\sum_{i=1}^{\infty}c_i^2 \int_0^{t\land\tau_l}\frac{(1+\|u\|_{W^{1,\infty}})^{2\delta}\|P_{n}u\|_{H^s}^2}{e+\|P_{n}u(r)\|_{H^s}^2}\mathrm{d}r\\
            &\quad\ -\mathbb{E}\sum_{i=1}^\infty c_i^2\int_0^{t\land\tau_l}\frac{2(1+\|u\|_{W^{1,\infty}})^{2\delta}\|P_{n}u\|_{H^s}^4}{(e+\|P_{n}(r)\|_{H^s}^2)^2}\mathrm{d}r.
        \end{aligned}
    \end{equation}
    As the sequence of functions $\{P_{n}u\}_{n\in\N}$ convergence to $u$ in $C([0,T];H^s(\mathbb{T}^d))$ almost surely as $n\to\infty$. We may pass to the limit $n\rightarrow \infty$ in inequality \eqref{5.4} by applying the Dominated Convergence Theorem, and we get
    \begin{equation}\label{5.5}
        \begin{aligned}
            &\mathbb{E}\ln (e+\|u(t\land\tau_l)\|_{H^s}^2)-\mathbb{E}\ln(e+\|u_0\|_{H^s}^2)\\
            &\quad\ \le \mathbb{E}\int_0^{t\land\tau_l}\bigg(\vartheta(u(r))-\frac{\sum_{i=1}^{\infty}c_i^2(1+\|u\|_{W^{1,\infty}})^{2\delta}\|u\|_{H^s}^4}{(e+\|u\|_{H^s}^2)^2(1+\ln(e+\|u\|_{H^s}^2))}\bigg)\mathrm{d}r,
        \end{aligned}
    \end{equation}
     where $\vartheta(u)$ is defined as
    \begin{equation*}
        \begin{aligned}
            \vartheta(u)&\triangleq\frac{\kappa\| u\|_{W^{1,\infty}}\|u\|_{H^s}^2+\sum_{i=1}^{\infty}c_i^2(1+\|u\|_{W^{1,\infty}})^{2\delta}\|u\|_{H^s}^2}{e+\|u\|_{H^s}^2}-\frac{2\sum_{i=1}^{\infty}c_i^2(1+\|u\|_{W^{1,\infty}})^{2\delta}\| u\|_{H^s}^4}{(e+\|u\|_{H^s}^2)^2}\\
            &\quad\ +\frac{\sum_{i=1}^{\infty}c_i^2(1+\|u\|_{W^{1,\infty}})^{2\delta}\|u\|_{H^s}^4}{(e+\|u\|_{H^s}^2)^2(1+\ln(e+\|u\|_{H^s}^2))}.
        \end{aligned}
    \end{equation*}
    \emph{\textbf{Case 1}}: First we consider the case when $\delta>\frac{1}{2}$ and $\sum_{i=1}^\infty c_i^2\neq 0$. It is observed that the function $f(x)=\frac{x}{e+x}$ is is strictly monotonically increasing for all $x>0$. Combining the Sobolev embedding $\|u\|_{W^{1,\infty}}\le D\|u\|_{H^s}$, where $D$ is the Sobolev embedding constant as in the proof of Proposition \ref{prop3.12}, we deduce that for any $\delta>\frac{1}{2}$
    \begin{align*}
         \vartheta(u)&\le\kappa \|u\|_{W^{1,\infty}}+\sum_{i=1}^{\infty}c^2(1+\|u\|_{W^{1,\infty}})^{2\delta}+\sum_{i=1}^\infty c_i^2\frac{(1+\|u\|_{W^{1,\infty}})^{2\delta}}{1+\ln(e+\|u\|_{H^s}^2)}\\
            &\quad\ -2\sum_{i=1}^\infty c_i^2(1+\|u\|_{W^{1,\infty}})^{2\delta}\bigg(\frac{\|u\|_{H^s}^2}{e+\|u\|_{H^s}^2}\bigg)^2\\
            &\le(1+\|u\|_{W^{1,\infty}})^{2\delta}\bigg(\frac{\kappa\|u\|_{W^{1,\infty}}}{(1+\|u\|_{W^{1,\infty}})^{2\delta}}+\sum_{i=1}^\infty c_i^2-2\sum_{i=1}^\infty c_i^2\bigg(\frac{\|u\|_{H^s}^2}{e+\|u\|_{H^s}^2}\bigg)^2\\
            &\quad\ +\sum_{i=1}^\infty c_i^2\frac{1}{1+\ln(e+\|u\|_{H^s}^2)}\bigg)\\
            &\le (1+\|u\|_{W^{1,\infty}})^{2\delta}\bigg(\frac{\kappa(1+\|u\|_{W^{1,\infty}})}{(1+\|u\|_{W^{1,\infty}})^{2\delta}}+\sum_{i=1}^\infty c_i^2-2\sum_{i=1}^\infty c_i^2\bigg(\frac{\|u\|_{W^{1,\infty}}^2}{D^2e+\|u\|_{W^{1,\infty}}^2}\bigg)^2\\
            &\quad\ +\frac{\sum_{i=1}^\infty c_i^2}{1-2\ln(D)+\ln(D^2e+\|u\|_{W^{1,\infty}}^2)}\bigg).
    \end{align*}
    In order to analyze the right-hand side of the above formula, we define the following function
    \begin{equation*}
        h(x)=\frac{\kappa(1+x)}{(1+x)^{2\delta}}+\sum_{i=1}^\infty c_i^2-2\sum_{i=1}^\infty c_i^2\bigg(\frac{x^2}{D^2e+x^2}\bigg)^2+\frac{\sum_{i=1}^\infty c_i^2}{1-2\ln(D)+\ln(D^2e+x^2)}.
    \end{equation*}
    It is obvious that the function $h(x)$ is continuous when $x\ge 0$, and $\lim_{x\to\infty}h(x)=-\sum_{i=1}^\infty c_i^2$. Therefore, we can estimate $\vartheta(u)$ as follows
    \begin{equation*}
        \lim_{\|u\|_{W^{1,\infty}}\to\infty}\vartheta(u)\le \lim_{\|u\|_{W^{1,\infty}}\to\infty}(1+\|u\|_{W^{1,\infty}})^{2\delta}h(\|u\|_{W^{1,\infty}})=-\infty.
    \end{equation*}
    Based on the fact that $h(0)>0$, we obtain that there exists a constant $C$ such that $\vartheta(u)\le C$ holds.

    \emph{\textbf{Case 2}}: For the case where $\delta=\frac{1}{2}$ and $\sum_{i=1}^\infty c_i^2>\kappa$. the function $h(x)$ is expressed as
    \begin{equation*}
        h(x)=\kappa+\sum_{i=1}^\infty c_i^2-2\sum_{i=1}^\infty c_i^2\bigg(\frac{x^2}{D^2e+x^2}\bigg)^2+\frac{\sum_{i=1}^\infty c_i^2}{1-2\ln(D)+\ln(D^2e+x^2)}.
    \end{equation*}
    As $x$ approaches infinity, this function asymptotically behaves as $\kappa-\sum_{i=1}^\infty c_i^2$, with $\kappa$ being the positive universal constant defined in \eqref{5.3}, i.e., the limit of $h(x)$ becomes $\kappa-\sum_{i=1}^\infty c_i^2<0$ as $x\rightarrow\infty$, which indicates that $\vartheta(u)$ tends to negative infinity as $\|u\|_{W^{1,\infty}}$ approaches infinity. Consequently, the functional $\vartheta(u)$ is upper-bounded by a positive constant.

    In both cases, we conclude from \eqref{5.5} that there exists a positive constant $C$ such that
    \begin{equation}\label{5.6}
        \begin{aligned}
            &\mathbb{E}\ln (e+\|u(t\land\tau_l)\|_{H^s}^2)-\mathbb{E}\ln(e+\|u_0\|_{H^s}^2)\\
            &\quad\ \le C t- \mathbb{E}\int_0^{t\land\tau_l}\frac{\sum_{i=1}^\infty c_i^2(1+\|u\|_{W^{1,\infty}})^{2\delta}\|u\|_{H^s}^4}{(e+\|u\|_{H^s}^2)^2(1+\ln(e+\|u\|_{H^s}^2))}\mathrm{d}r.
        \end{aligned}
    \end{equation}
    As $P_{n}u$ converges to $u$ in $C([0,T];H^s(\mathbb{T}^d))$ when $n\to\infty$, we can find a non-negative function $\varpi$ satisfying $\lim_{n\to\infty}\varpi(n)=0$ such that the following inequality holds
        \begin{align}\label{5.7}
            &\bigg|\frac{\kappa\|u\|_{W^{1,\infty}}\|u\|_{H^s}^2+\sum_{i=1}^\infty c_i^2(1+\| u\|_{W^{1,\infty}})^{2\delta}\|P_{n}u\|_{H^s}^2}{e+\|P_{n}u\|_{H^s}^2}-\frac{2\sum_{i=1}^\infty c_i^2(1+\|u\|_{W^{1,\infty}})^{2\delta}\|P_{n}u\|_{H^s}^4)}{(e+\|P_{n}u\|_{H^s}^2)^2}\bigg|\nonumber\\
            &\quad\ =\bigg|\vartheta(u)+\varpi(n)-\frac{\sum_{i=1}^\infty c_i^2(1+\|u\|_{W^{1,\infty}})^{2\delta}\|u\|_{H^s}^4}{(e+\|u\|_{H^s}^2)^2(1+\ln(e+\|u\|_{H^s}^2))}\bigg|\\
            &\quad\ \le C+\varpi(n)+\frac{\sum_{i=1}^\infty c_i^2(1+\|u\|_{W^{1,\infty}})^{2\delta}\|u\|_{H^s}^4}{(e+\|u\|_{H^s}^2)^2(1+\ln(e+\|u\|_{H^s}^2))}.\nonumber
        \end{align}
    Applying the Burkholder-Davis-Gundy inequality to \eqref{logarithmic} and \eqref{5.7}, we obtain for any $T>0$,
    \begin{equation}
        \begin{aligned}
             \mathbb{E}\ln(e+\|P_{n}u(t)\|_{H^s}^2) 
            & \le\mathbb{E}\ln(e+\|P_{n}u_0\|_{H^s})+\frac{1}{2}\mathbb{E}(1+\ln(e+\|P_{n}u\|_{H^s}^2))\\
            & +\mathbb{E}\int_0^{t\land\tau_l}\bigg(C+\varpi(n)+\frac{\sum_{i=1}^\infty c_i^2(1+\|u\|_{W^{1,\infty}})^{2\delta}\|u\|_{H^s}^4}{(e+\|u\|_{H^s}^2)^2(1+\ln(e+\|u\|_{H^s}^2))}\bigg)\mathrm{d}r\\
            &  +C \mathbb{E}\int_0^{t\land\tau_l}\frac{\sum_{i=1}^\infty c_i^2(1+\|u\|_{W^{1,\infty}})^{2\delta}\|u\|_{H^s}^4}{(e+\|u\|_{H^s}^2)^2(1+\ln(e+\|u\|_{H^s}^2))}\mathrm{d}r\\
            &  \le C \mathbb{E}\ln(e+\|u_0\|_{H^s})+\frac{1}{2}\mathbb{E}(1+\ln(e+\|u\|_{H^s}^2))+(2C+\varpi(n))t\\
            &  +C \mathbb{E}\int_0^{t\land\tau_l}\frac{\sum_{i=1}^\infty c_i^2(1+\|u\|_{W^{1,\infty}})^{2\delta}\|u\|_{H^s}^4}{(e+\|u\|_{H^s}^2)^2(1+\ln(e+\|u\|_{H^s}^2))}\mathrm{d}r.
        \end{aligned}
    \end{equation}
    Taking a supremum over $t\in [0,T]$ and letting $n\to\infty$,  we establish the uniform estimate
    \begin{equation}
        \mathbb{E}\sup_{r\in[T\land\tau_l]}\ln(e+\|u(r)\|_{H^s}^2)\le C(\mathbb{E}\ln(e+\|u_0\|_{H^s}^2)+T).
    \end{equation}
    From the definition of the stopping time $\tau_l$, it follows that $\tau_l$ monotonically increases toward $\xi$ as $l\to\infty$. Applying Chebyshev's inequality and employing the uniform bound, we see that
        \begin{align}
            \mathbb{P}\{\xi<T\}&\le\mathbb{P}\{\tau_l\le T\}\nonumber\\
            &\le\mathbb{P}\bigg\{\sup_{t\in[0,T]}\|u(t)\|_{H^s}^2\ge l^2\bigg\}\nonumber\\
            &\le\mathbb{P}\bigg\{\sup_{t\in[0,T]}\ln(e+\|u(t)\|_{H^s}^2)\ge \ln(e+l^2)\bigg\}\\
            &\le\frac{\mathbb{E}\sup_{t\in[0,T]}\ln(e+\|u(t)\|_{H^s}^2)}{\ln(e+l^2)}\nonumber\\
            &\le C\frac{(\mathbb{E}\ln(e+\|u_0\|_{H^s}^2)+T)}{\ln(e+l^2)}\to0,\nonumber
        \end{align}
    as $l\to\infty$. Therefore, we conclude that $\mathbb{P}\{\xi<n\}=0$ holds for every   integer $n$, which leads to
    \begin{equation*}
        \mathbb{P}\{\xi=\infty\}=1-\mathbb{P}\bigg(\bigcup_{n\in\mathbb{N}}\{\xi<n\}\bigg)\ge 1-\sum_{n\in\mathbb{N}}\mathbb{P}\{\xi<n\}=1.
    \end{equation*}
    By following arguments similar to those in the proof of Theorem \ref{result1}, we establish that $\xi$ coincides with the maximal existence time in the sense of definition \ref{pathwise-uniqueness}. Consequently, the local pathwise solution $(u,\{\tau_n\}_{n\ge 1},\xi)$ extends globally in time. This completes the proof of Theorem \ref{result2}.
\end{proof}

\subsection{Global Existence for linear multiplicative noise}
Here we consider the stochastic hyperbolic Keller-Segel equation with linear multiplicative noise.
\begin{proof}[\textbf{\emph{Proof of Theorem \ref{result3}.}}]
To analyze \eqref{linear-noise}, we employ a stochastic transformation that converts the SPDE into a random PDE. Specifically, we introduce the auxiliary process
\begin{equation}
    \mu(t)=e^{\frac{\lambda^2}{2}t-\lambda W(t)}.
\end{equation}
Applying It\^o formula leads to
$
    \mathrm{d}\mu=\lambda^2 \mu\mathrm{d}t-\lambda \mu\mathrm{d}W(t).
$
Therefore, by using the It\^o product rule to $ \mu u $ we obtain
\begin{equation}
    \begin{aligned}
        \mathrm{d}(\mu u)&=\mu\mathrm{d}u+u\mathrm{d}\mu+\mathrm{d}\mu\mathrm{d}u\\
        &=-\mu((1-2u)\nabla S(u)\cdot\nabla u+(u-u^2)\Delta S(u))\mathrm{d}t\\
        &\quad\ +\lambda \mu u\mathrm{d}W(t)+u(\lambda^2 \mu\mathrm{d}t-\lambda \mu\mathrm{d}W(t))-\lambda^2\mu u \mathrm{d}t\\
        &=-\mu((1-2u)\nabla S(u)\cdot\nabla u+(u-u^2)\Delta S(u))\mathrm{d}t.
    \end{aligned}
\end{equation}
By defining $v=\mu u$, we transform equation \eqref{linear-noise} into the following random PDE:
\begin{equation}\label{random-pde}
    \left\{
    \begin{aligned}
        &\partial_t v+\mu^{-1}\nabla S(v)\cdot\nabla v-2\mu^{-2}v\nabla S(v)\cdot\nabla v\\
        &\quad\ +\mu^{-1}v\Delta S(v)-\mu^{-2}v^2\Delta S(v)=0,  &  x\in ~\mathbb{T}^d,t\ge 0,\\
    &v(0,x)=u_0(x),&  x\in ~\mathbb{T}^d,t=0.
    \end{aligned}
    \right.
\end{equation}
We begin by applying the Littlewood-Paley decomposition operator $\Delta_j$ (cf. \cite[Chapter 2]{Danchin2011}) to the first equation in \eqref{random-pde}, yielding
    \begin{align*}
        &\partial_t\Delta_j v+\mu^{-1}\nabla S(v)\cdot \nabla \Delta_j v-2\mu^{-2}v\nabla S(v)\cdot \nabla \Delta_j v\\
        &\quad\ =\mu^{-1}(\nabla S(v)\cdot\nabla \Delta_j v-\Delta_j(\nabla S(v)\cdot\nabla v))\\
        &\quad\ \quad\ -2\mu^{-2}(v\nabla S(v)\cdot\nabla\Delta_j v-\Delta_j(v\nabla S(v)\cdot\nabla v))\\
        &\quad\ \quad\ -\mu^{-1}\Delta_j(v\Delta S(v))+\mu^{-2}\Delta_j(v^2\Delta S(v)),\quad\ \P\text{-a.s.}
    \end{align*}
Multiplying both sides by $\Delta_j$ and   integrating over $\mathbb{T}^d$, we obtain
    \begin{align}\label{5.15}
        \frac{1}{2}\frac{\mathrm{d}}{\mathrm{d} t}\|\Delta_j v\|_{L^2}^2&=\frac{1}{2}\mu^{-1}(\div (\nabla S(v)),|\Delta_j v|^2)_2-\mu^{-2}(\div(v\nabla S(v)),|\Delta_j v|^2)_2\nonumber\\
        &\quad\ +\mu^{-1}(\nabla S(v)\cdot\nabla \Delta_j v-\Delta_j(\nabla S(v)\cdot\nabla v),\Delta_j v)_2\nonumber\\
        &\quad\ -2\mu^{-2}(v\nabla S(v)\cdot\nabla\Delta_j v-\Delta_j(v\nabla S(v)\cdot\nabla v),\Delta_jv)_2\nonumber\\
        &\quad\ -\mu^{-1}\Delta_j(v\Delta S(v),\Delta_jv)_2+\mu^{-2}\Delta_j(v^2\Delta S(v),\Delta_j v)_2\nonumber\\
        &\le \frac{1}{2}\mu^{-1}\|\div(\nabla S(v))\|_{L^{\infty}}\|\Delta v\|_{L^2}^2-\mu^{-2}\|\div(v\nabla S(v))\|_{L^{\infty}}\|\Delta_j v\|_{L^2}^2\\
        &\quad\ +C\mu^{-1}c_j2^{-js}\|\Delta_jv\|_{L^2}\|v\|_{B_{2,2}^s}\|\nabla v\|_{L^{\infty}}\nonumber\\
        &\quad\ -2C\mu^{-2}c_j2^{-js}\|\Delta_jv\|_{L^2}\|v\|_{B_{2,2}^s}\|\nabla v\|_{L^\infty}^2\nonumber\\
        &\quad\ -\mu^{-1}\|\Delta_j(v\Delta S(v))\|_{L^2}\|\Delta_jv\|_{L^2}\nonumber\\
        &\quad\ +\mu^{-2}\|\Delta_j(v^2\Delta S(v))\|_{L^2}\|\Delta_j v\|_{L^2},\quad\ \P\text{-a.s.},\nonumber
    \end{align}
where $c_j$ satisfies $\|\{c_j\}_{j\ge -1}\|_{l^2}=1$, and we have employed standard commutator estimates from \cite{Danchin2011} for the third and fourth term on the right-hand side of \eqref{5.15}, we derive the following differential inequality
\begin{equation}\label{5.16}
    \begin{aligned}
        \frac{\mathrm{d}}{\mathrm{d} t}2^{js-1}\|\Delta_j v\|_{L^2}&\le 2^{js-1}\mu^{-1}\|\nabla v\|_{L^{\infty}}\|\Delta v\|_{L^2}+2^{js}\mu^{-2}\|\nabla v\|_{L^{\infty}}^2\|\Delta_j v\|_{L^2}\\
        &\quad\ +C\mu^{-1}c_j\|v\|_{B_{2,2}^s}\|\nabla v\|_{L^{\infty}}+2C\mu^{-2}c_j\|v\|_{B_{2,2}^s}\|\nabla v\|_{L^\infty}^2\\
        &\quad\ +2^{js}\mu^{-1}\|\Delta_j(v\Delta S(v))\|_{L^2}+2^{js}\mu^{-2}\|\Delta_j(v^2\Delta S(v))\|_{L^2},\quad\ \P\text{-a.s.}
    \end{aligned}
\end{equation}
Taking the $l^2$-norm on both sides of \eqref{5.16} with respect to $j$ leads to
\begin{equation}\label{5.17}
\begin{aligned}
    \frac{\mathrm{d}}{\mathrm{d} t}\|v\|_{B_{2,2}^s}&\le C\mu^{-1}\|v\|_{B_{2,2}^s}\|\nabla v\|_{L^{\infty}}+2C\mu^{-2}\|v\|_{B_{2,2}^s}\|\nabla v\|_{L^\infty}^2\\
    &\quad\ +\mu^{-1}\|v\Delta S(v)\|_{B_{2,2}^s}+\mu^{-2}\|v^2\Delta S(v)\|_{B_{2,2}^s},\quad\ \P\text{-a.s.}
\end{aligned}
\end{equation}
Since $S(v)=(1-\Delta)^{-1}v$, we follow the similar method as in \cite[Lemma 2.4]{SMEP2} to obtain
\begin{equation}\label{5.18}
    \mu^{-1}\|v\Delta S(v)\|_{B_{2,2}^s}+\mu^{-2}\|v^2\Delta S(v)\|_{B_{2,2}^s}\le C(\|v\|_{L^{\infty}}+\|\nabla v\|_{L^{\infty}})\|v\|_{B_{2,2}^s}.
\end{equation}
Due to \eqref{5.17}, \eqref{5.18} and the fact that $B_{2,2}^s(\T^d)\approx H^s(\T^d)$, we deduce that
\begin{equation}\label{5.19}
    \begin{aligned}
        \frac{\mathrm{d}}{\mathrm{d} t}\|v\|_{H^s}&\le C\mu^{-1}\|v\|_{H^s}\| v\|_{W^{1,\infty}}+C^2\mu^{-2}\|v\|_{H^s}\|v\|_{W^{1,\infty}}^2, \quad\ \P\text{-a.s.},
    \end{aligned}
\end{equation}
where $C>0$ is a constant depending only on the regularity parameter $s$ and dimension $d$.

By defining $\varphi(t)\triangleq e^{-\frac{\lambda^2}{2}t}v(t)$, we transform \eqref{5.19} into the following form
\begin{equation}\label{5.20}
    \begin{aligned}
        \frac{\mathrm{d}}{\mathrm{d} t}\| \varphi\|_{H^s}+\frac{\lambda^2}{2}\|\varphi\|_{H^s}&\le Ce^{\lambda W(t)}\|\varphi\|_{H^s}\| \varphi\|_{W^{1,\infty}}+C^2e^{2\lambda W(t)}\|v\|_{H^s}\|v\|_{W^{1,\infty}}^2, \quad\ \P\text{-a.s.}
    \end{aligned}
\end{equation}
For any $\rho\ge 2$, we define the stopping time
\begin{equation}
    \begin{aligned}
        \tau^*&\triangleq\inf_{t\ge 0}\bigg\{ e^{\lambda W(t)}\| \varphi(t)\|_{W^{1,\infty}}\ge \frac{\lambda^2}{2\rho C}\bigg\}\\
        &=\inf_{t\ge 0}\bigg\{\|v(t)\|_{W^{1,\infty}}\ge \frac{\lambda^2}{2\rho C}\bigg\}.
    \end{aligned}
\end{equation}
Assume that $\|u_0\|_{H^s}\le \frac{\lambda^2}{4R\rho C D}$, where $D$ is the positive Sobolev embedding constant from $H^s$ to $W^{1,\infty}$, then we have
\begin{equation*}
    \|v_0\|_{W^{1,\infty}}\le D\|v_0\|_{H^s}\le \frac{\lambda^2}{4 R\rho C}.
\end{equation*}
From the definition of $\tau^*$, we observe that $\tau^* >0$ holds $\P$-almost surely. Moreover, for all $t\in[0,\tau^*)$, \eqref{5.20} indicates
\begin{equation*}
    \frac{\mathrm{d}}{\mathrm{d} t}\|\varphi(t)\|_{H^s}+(\frac{\lambda^2}{2}-\frac{\lambda^2}{2\rho}-\frac{\lambda^4}{4\rho^2})\|\varphi(t)\|_{H^s}\le 0,
\end{equation*}
then we obtain the following exponential bound
\begin{equation}\label{5.21}
    \begin{aligned}
        \|v\|_{H^s}&\le e^{\lambda W(t)-(\frac{\lambda^2}{2}-\frac{\lambda^2}{2\rho}-\frac{\lambda^4}{4\rho^2})t}\|v_0\|_{H^s}\\
        &\le e^{\lambda W(t)-\frac{1}{2}(\frac{\lambda^2}{2}-\frac{\lambda^2}{2\rho})t}e^{-\frac{1}{2}(\frac{\lambda^2}{2}-\frac{\lambda^2}{2\rho}-\frac{\lambda^4}{2\rho^2})t}\|v_0\|_{H^s}.
    \end{aligned}
\end{equation}
Define the stopping time
\begin{equation*}
    \widetilde{\tau}\triangleq\inf_{t\ge 0}\left\{e^{\lambda W(t)-\frac{1}{2}(\frac{\lambda^2}{2}-\frac{\lambda^2}{2\rho})t}\ge R\right\},
\end{equation*}
It is obvious that $\widetilde{\tau}>0$ $\P$-almost surely. Combining \eqref{5.21} with the condition on initial data yields
\begin{equation}
    \|v(t)\|_{H^s}\le \frac{\lambda^2R}{4R\rho CM}e^{-\frac{1}{2}(\frac{\lambda^2}{2}-\frac{\lambda^2}{2\rho}-\frac{\lambda^4}{2\rho^2})t}\le \frac{\lambda^2}{4\rho CM},\quad\ \forall t\in[0,\tau^*\land\widetilde{\tau}).
\end{equation}
The definition of $\tau^*$ implies that $\tau^*\land\widetilde{\tau}=\widetilde{\tau}$, and consequently
\begin{equation}
    \sup_{t\in[0,\widetilde{\tau}]}\|v(t)\|_{W^{1,\infty}}\le M\sup_{t\in[0,\widetilde{\tau}]} \|v(t)\|_{H^s}\le\frac{\lambda^2}{4\rho M},
\end{equation}
which implies that $\tau^*\ge \widetilde{\tau}$. Moreover, the maximal pathwise solution  $(u,\{\tau_n\}_{n\ge 1},\xi)$ is global in time on the set $\{\widetilde{\tau}=\infty\}$, which corresponds to the set where the process $\Phi_t=e^{\lambda W(t)-\frac{1}{2}(\frac{\lambda^2}{2}-\frac{\lambda^2}{2\rho})t}$ remains bounded by $R$ for all $t\ge 0$. To establish the existence of global solutions, we need to estimate $\P\{\widetilde{\tau}=\infty\}$.

On the set $\{\widetilde{\tau}=\infty\}$, we observe that $0<\Phi_t\le R$ holds for all $t\ge 0$. The process $\Phi_t$ obeys the geometric Brownian motion described by the following stochastic differential equation
\begin{equation}
    \mathrm{d}\Phi_t=\bigg(\frac{\lambda^2}{4}+\frac{\lambda^2}{4\rho}\bigg)\Phi_t\mathrm{d} t+\lambda\Phi_t\mathrm{d} W(t).
\end{equation}
Applying It\^o formula to $\Phi^k_t$ for $k\in\R$ yields
\begin{equation}\label{5.26}
    \begin{aligned}
        \mathrm{d} \Phi_t^k&=k\Phi^{k-1}(t)\mathrm{d} \Phi_t+\frac{k(k-1)}{2}\Phi^{k-2}_t\lambda^2\Phi_t^2\mathrm{d} t \\
        &=k\bigg(\frac{\lambda^2}{4}+\frac{\lambda^2}{4\rho}\bigg)\Phi^k_t\mathrm{d} t+\frac{\lambda^2k(k-1)}{2}\Phi^k_t\mathrm{d} t+\lambda k\Phi^k \mathrm{d} W(t).
    \end{aligned}
\end{equation}
Integrating \eqref{5.26} on $[0,t\land\widetilde{\tau}]$ and taking expectations, we obtain
\begin{equation}\label{5.27}
    \mathbb{E}(\Phi^k(t\land\widetilde{\tau}))=1+\mathbb{E}\int_0^{t\land\widetilde{\tau}}k\bigg(\frac{\lambda^2}{4}+\frac{\lambda^2}{4\rho}\bigg)\Phi^k_r\mathrm{d} r+\mathbb{E}\int_0^{t\land\widetilde{\tau}}\frac{\lambda^2k(k-1)}{2}\Phi^k_r\mathrm{d} r.
\end{equation}
Letting $k=\frac{1}{2}-\frac{1}{2\rho}$, the expectation \eqref{5.27} simplifies to $\mathbb{E}(\Phi^{\frac{1}{2}-\frac{1}{2\rho}}(t\land\widetilde{\tau}))=1$. Therefore we have
\begin{equation}
    \begin{aligned}
        \P\{\xi=\infty\}&\ge \P\{\widetilde{\tau}=\infty\}=\P\bigg\{\bigcap_n\{\widetilde{\tau}>n\}\bigg\}=\lim_{n\to\infty}\P\{\widetilde{\tau}>n\}\\
        &\ge \lim_{n\to\infty}\P\{\Phi_t^{\frac{1}{2}-\frac{1}{2\rho}}(n\land \widetilde{\tau})<R^{\frac{1}{2}-\frac{1}{2\rho}}\}\\
        &\ge 1-\lim_{n\to\infty}\P \{\Phi^{\frac{1}{2}-\frac{1}{2\rho}}_t(n\land\widetilde{\tau})\ge R^{\frac{1}{2}-\frac{1}{2\rho}}\}\\
        &\ge1-\frac{1}{R^{\frac{1}{2}-\frac{1}{2\rho}}}\lim_{n\to\infty}\mathbb{E}(\Phi^{\frac{1}{2}-\frac{1}{2\rho}}(t\land\widetilde{\tau}))=1-\frac{1}{R^{\frac{1}{2}-\frac{1}{2\rho}}}.
    \end{aligned}
\end{equation}
This completes the proof of Theorem \ref{result3}, demonstrating that the probability of global existence $\mathbb{P}\{\xi=\infty\}$ approaches $1$ as $R\to\infty$.
\end{proof}

\section{Acknowledgements}

The authors would like to express their gratitude to Professor Tusheng Zhang for his insightful  discussions on the initial draft of this manuscript. This work was partially supported by the National Key Research and Development Program of China (Grant No. 2023YFC2206100).

\bibliographystyle{plain}%
\bibliography{SHKS}

@article{flandoli2021high,
  title={High mode transport noise improves vorticity blow-up control in 3D Navier--Stokes equations},
  author={Flandoli, F. and Luo, D.},
  journal={Probability Theory and Related Fields},
  volume={180},
  number={1},
  pages={309--363},
  year={2021},
  publisher={Springer}
}

@article{hong2024regularization,
  title={Regularization by Nonlinear Noise for PDEs: Well-posedness and Finite Time Extinction},
  author={Hong, W. and Li, S. and Liu, W.},
  journal={arXiv preprint arXiv:2407.06840},
  year={2024}
}

@article{gassiat2019regularization,
  title={Regularization by noise for stochastic Hamilton--Jacobi equations},
  author={Gassiat, P. and Gess, B.},
  journal={Probability Theory and Related Fields},
  volume={173},
  number={3},
  pages={1063--1098},
  year={2019},
  publisher={Springer}
}

@article{zhang2020local,
  title={Local and global pathwise solutions for a stochastically perturbed nonlinear dispersive PDE},
  author={Zhang, L.},
  journal={Stochastic Processes and their Applications},
  volume={130},
  number={10},
  pages={6319--6363},
  year={2020},
  publisher={Elsevier}
}

@article{bi2025hyperbolic,
  title={Hyperbolic Keller-Segel equations with weak dissipation in Besov spaces: Global well-posedness and blow-up criteria},
  author={Bi, B. and Zhang, L.},
  journal={arXiv preprint arXiv:2509.03125},
  year={2025}
}

@article{Dolak2005,
  author  = {Dolak, Y. and Schmeiser, C.},
  title   = {The Keller-Segel model with logistic sensitivity function and small diffusivity},
  journal = {SIAM J. Appl. Math.},
  year    = {2005},
  volume  = {66},
  number  = {1},
  pages   = {286--308}
}

@article{Burger2008,
  author  = {Burger, M. and Dolak-Struss, Y. and Schmeiser, C.},
  title   = {Asymptotic analysis of an advection-dominated Chemotaxis model in multiple spatial dimensions},
  journal = {Commun. Math. Sci.},
  year    = {2008},
  volume  = {6},
  number  = {1},
  pages   = {1--28}
}

@article{Perthame2009,
  author  = {Perthame, B. and Dalibard, A.},
  title   = {Existence of solutions of the hyperbolic Keller-Segel model},
  journal = {Trans. Am. Math. Soc.},
  year    = {2009},
  volume  = {361},
  pages   = {2319--2335}
}

@article{Burger2006,
  author  = {Burger, M. and {Di Francesco}, M. and {Dolak-Struss}, Y.},
  title   = {The Keller-Segel model for Chemotaxis with prevention of over-crowding: linear vs. nonlinear diffusion},
  journal = {SIAM J. Math. Anal.},
  year    = {2006},
  volume  = {38},
  pages   = {1288--1315}
}

@article{Lee2015,
  author  = {Lee, Y. and Liu, H.},
  title   = {Threshold for shock formation in the hyperbolic Keller - Segel model},
  journal = {Appl. Math. Lett.},
  year    = {2015},
  volume  = {50},
  pages   = {56--63}
}

@article{Zhou2021,
  author  = {Zhou, S. and Zhang, S. and Mu, C.},
  title   = {Well-posedness and non-uniform dependence for the hyperbolic Keller-Segel equation in the Besov framework},
  journal = {J. Differ. Equ.},
  year    = {2021},
  volume  = {302},
  pages   = {662--679}
}

@article{Zhang2022,
  author  = {Zhang, L. and Mu, C. and Zhou, S.},
  title   = {On the initial value problem for the hyperbolic Keller-Segel equations in Besov spaces},
  journal = {J. Differ. Equ.},
  year    = {2022},
  volume  = {334},
  pages   = {451--489}
}

@article{Meng2024,
  author  = {Meng, Z. and Nie, Y. and Ye, W. and Yin, Z.},
  title   = {Global well-posedness, blow-up phenomenon and ill-posedness for the hyperbolic Keller-Segel equations},
  journal = {J. Differ. Equ.},
  year    = {2024},
  volume  = {413},
  pages   = {828--850}
}

@article{he2016noise,
  title={Noise-induced increase of sensitivity in bacterial chemotaxis},
  author={He, R. and Zhang, R. and Yuan, J.},
  journal={Biophysical Journal},
  volume={111},
  number={2},
  pages={430--437},
  year={2016},
  publisher={Elsevier}
}

@article{patlak1953random,
  title={Random walk with persistence and external bias},
  author={Patlak, C.S.},
  journal={The bulletin of mathematical biophysics},
  volume={15},
  number={3},
  pages={311--338},
  year={1953},
  publisher={Springer}
}

@article{stevens2000derivation,
  title={The derivation of chemotaxis equations as limit dynamics of moderately interacting stochastic many-particle systems},
  author={Stevens, A.},
  journal={SIAM Journal on Applied Mathematics},
  volume={61},
  number={1},
  pages={183--212},
  year={2000},
  publisher={SIAM}
}

@book{gawarecki2011stochastic,
    author = {L. Gawarecki and V. Mandrekar},
    title = {Stochastic Dierential Equations in Innite Dimensions},
    publisher = {Springer-Verlag Berlin Heidelberg},
    year = {2011}
}

@article{kato1988,
    author = {T. Kato and G. Ponce},
    title = {Commutator estimates and the Euler and Navier-Stokes equations},
    journal = {Communications on Pure and Applied Mathematics},
volume={41},
pages={891--907},
    year = {1988}
}

@article{flandoli1995,
    author = {F. Flandoli and D. Gatarek},
    title = {Martingale and stationary solutions for stochastic Navier-Stokes equations},
    journal = {Probab.Theory Related Fields},
volume={102},
pages={367--391},
number={3},
    year = {1995}
}

@book{prato1992,
    author = {G.D. Prato and J. Zabczyk},
    title = {Stochastic Equations in Infinite Dimensions},
    publisher = {Cambridge University Press},
    year = {1992}
}

@article{bensoussan1995,
    author = {A. Bensoussan},
    title = {Stochastic Navier-Stokes Equations},
    journal = {Acta Appl Math},
volume={38},
pages={267--304},
number={3},
    year = {1995}
}

@article{gyongy,
    author = {I. Gy\"ongy and N. Krylov},
    title = {Existence of strong solutions for It\^o's stochastic equations via approximations.},
    journal ={Probab. Th. Rel. Fields} ,
    year = {1996},
volume={105},
number={2},
pages={143--158}
}

@article{glattholtz2014,
    author = {N. Glatt-Holtz and V.C. Vicol},
    title = {Local and global existence of smooth solutions for the stochastic Euler equations with multiplicative noise},
    journal = {Ann. Probab.},
    year = {2014},
volume={42},
number={1},
pages={80--145}
}

@book{jacod2006,
    author = {J. Jacod},
    title = {Calcul stochastique et probl\`emes de martingales},
    publisher ={Springer, Berlin. },
    year = {2006},
volume={714}
}

@article{mikulevicius2004,
    author = {R. Mikulevicius and B.L. Rozovskii},
    title = {Stochastic Navier-Stokes equations for turbulent flows},
    journal = {SIAM J. Math. Anal.},
    year = {2004},
volume={35},
number={5},
pages={1250--1310}
}

@article{holtz2009,
    author = {N. Glatt-Holtz and M. Ziane},
    title = {Strong pathwise solutions of the stochastic Navier-Stokes system},
    journal ={Adv. Differential Equations} ,
    year = {2009},
volume={14},
pages={596--600}
}

@book{taylor1991,
    author = {M. Taylor},
    title = {Pseudodifferential Operators and Nonlinear PDE},
    publisher = {Birkh\"auser, Boston},
    year = {1991}
}

@article{SMEP2,
    author ={L. Zhang} ,
    title = {Effect of random noises on pathwise solutions to the
high-dimensional modified Euler-Poincar\'e system},
    journal = {Journal of Differential Equations},
    year = {2024},
volume={403},
pages={87--157}
}

@article{Danchin2011,
    author ={H. Bahouri and J.-Y. Chemin and R. Danchin} ,
    title = {Fourier Analysis and Nonlinear Partial Differential Equations},
    journal = {Springer Science \& Business Media},
    year = {2011},
volume={343}
}

@article{Keller-Segel,
    author = {E. Keller and L. Segel},
    title = {Model for chemotaxis},
    journal ={J. Theor. Biol} ,
    year ={1971},
volume={30},
number={2},
pages={225--234}
}

@article{Hillen,
    author = {T. Hillen and K. Painter},
    title = {A user's guide to pde models for chemotaxis},
    journal = {J. Math. Biol},
    year = {2009},
volume={58},
number={1},
pages={183--217}
}

@article {bellomo2015,
    AUTHOR = {N. Bellomo and A. Bellouquid and Y. Tao and M. Winkler},
     TITLE = {Toward a mathematical theory of {K}eller-{S}egel models of
              pattern formation in biological tissues},
   JOURNAL = {Math. Models Methods Appl. Sci.},
  FJOURNAL = {Mathematical Models and Methods in Applied Sciences},
    VOLUME = {25},
      YEAR = {2015},
    NUMBER = {9},
     PAGES = {1663--1763},

}

@article {hausenblas2022,
    AUTHOR = {E. Hausenblas and D. Mukherjee and T. Tran},
     TITLE = {The one-dimensional stochastic {K}eller-{S}egel model with
              time-homogeneous spatial {W}iener processes},
   JOURNAL = {J. Differential Equations},
  FJOURNAL = {Journal of Differential Equations},
    VOLUME = {310},
      YEAR = {2022},
     PAGES = {506--554},

}

@article{chen2025,
  title={Well-posedness of stochastic chemotaxis system},
  author={Chen, Y. and Zhai, J. and Zhang, T.},
  journal={Journal of Differential Equations},
  volume={442},
  pages={113531},
  year={2025},
  publisher={Elsevier}
}

@article {tang2024,
    AUTHOR = {H. Tang and Z. Wang},
     TITLE = {Strong solutions to a nonlinear stochastic
              aggregation-diffusion equation},
   JOURNAL = {Commun. Contemp. Math.},
  FJOURNAL = {Communications in Contemporary Mathematics},
    VOLUME = {26},
      YEAR = {2024},
    NUMBER = {2},
     PAGES = {Paper No. 2250073, 39},

}

@article {huang2021,
    AUTHOR = {H. Huang and J. Qiu},
     TITLE = {The microscopic derivation and well-posedness of the
              stochastic {K}eller-{S}egel equation},
   JOURNAL = {J. Nonlinear Sci.},
  FJOURNAL = {Journal of Nonlinear Science},
    VOLUME = {31},
      YEAR = {2021},
    NUMBER = {1},
     PAGES = {Paper No. 6, 31},

}

@article {lukas2026,
    AUTHOR = {L. Bol and L. Chen and Y. Li},
     TITLE = {Two-dimensional signal-dependent parabolic-elliptic
              {K}eller-{S}egel system and its mean-field derivation},
   JOURNAL = {J. Differential Equations},
  FJOURNAL = {Journal of Differential Equations},
    VOLUME = {450},
      YEAR = {2026},
     PAGES = {113--712},

}

@article {McKeanVlasov2021,
    AUTHOR = {M. Toma\v{s}evi\'c},
     TITLE = {A new {M}c{K}ean-{V}lasov stochastic interpretation of the
              parabolic-parabolic {K}eller-{S}egel model: the
              two-dimensional case},
   JOURNAL = {Ann. Appl. Probab.},
  FJOURNAL = {The Annals of Applied Probability},
    VOLUME = {31},
      YEAR = {2021},
    NUMBER = {1},
     PAGES = {432--459},

}

@article {Cattiaux2016,
    AUTHOR = {P. Cattiaux and L. P\'ed\`eches},
     TITLE = {The 2-{D} stochastic {K}eller-{S}egel particle model:
              existence and uniqueness},
   JOURNAL = {ALEA Lat. Am. J. Probab. Math. Stat.},
  FJOURNAL = {ALEA. Latin American Journal of Probability and Mathematical
              Statistics},
    VOLUME = {13},
      YEAR = {2016},
    NUMBER = {1},
     PAGES = {447--463},

}

@article {haskovec2011,
    AUTHOR = {J. Ha\v{s}kovec and C. Schmeiser},
     TITLE = {Convergence of a stochastic particle approximation for measure
              solutions of the 2{D} {K}eller-{S}egel system},
   JOURNAL = {Comm. Partial Differential Equations},
  FJOURNAL = {Communications in Partial Differential Equations},
    VOLUME = {36},
      YEAR = {2011},
    NUMBER = {6},
     PAGES = {940--960},


}

@article {chen2023,
    AUTHOR = {L. Chen and V. Gvozdik and Y. Li},
     TITLE = {Rigorous derivation of the degenerate parabolic-elliptic
              {K}eller-{S}egel system from a moderately interacting
              stochastic particle system. {P}art {I} {P}artial differential
              equation},
   JOURNAL = {J. Differential Equations},
  FJOURNAL = {Journal of Differential Equations},
    VOLUME = {375},
      YEAR = {2023},
     PAGES = {567--617},


}

@article {aryan2024,
    AUTHOR = {S. Aryan and M. Rosenzweig and G. Staffilani},
     TITLE = {Trend to equilibrium for flows with random diffusion},
   JOURNAL = {Int. Math. Res. Not. IMRN},
  FJOURNAL = {International Mathematics Research Notices. IMRN},
      YEAR = {2024},
    NUMBER = {10},
     PAGES = {8764--8781},


}

@article {Rosenzweig2023,
    AUTHOR = {M. Rosenzweig and G. Staffilani},
     TITLE = {Global solutions of aggregation equations and other flows with
              random diffusion},
   JOURNAL = {Probab. Theory Related Fields},
  FJOURNAL = {Probability Theory and Related Fields},
    VOLUME = {185},
      YEAR = {2023},
    NUMBER = {3-4},
     PAGES = {1219--1262},

}

\end{document}